\documentclass[11pt]{amsart}
\usepackage{graphicx}
\usepackage{amssymb}
\usepackage{amsmath}
\usepackage{amsthm}
\usepackage{amsfonts}
\usepackage{bbm}
\usepackage{tikz}
\usepackage{tikz-cd}
\usepackage{setspace,kantlipsum}
\usepackage[toc,page]{appendix}
\usepackage{hyperref}
\usepackage{xcolor}
\usepackage[allcommands]{overarrows}
\usepackage{multirow}
\usepackage{bm}
\usepackage{caption}

\definecolor{green}{RGB}{0, 204, 0}
\definecolor{forest}{RGB}{0, 118, 0}
\definecolor{orange}{RGB}{255, 170, 0}
\definecolor{blue}{RGB}{0, 64, 255}
\definecolor{purple}{RGB}{163, 0, 163}

\usepackage{float}
\usepackage{caption}

\newtheorem{thm}{Theorem}

\newtheorem{thrm}{Theorem}[section]

\newtheorem{lemma}[thrm]{Lemma}
\newtheorem{cor}[thrm]{Corollary}

\newtheorem*{quest}{Question}

\theoremstyle{definition}

\newtheorem{remark}[thrm]{Remark}
\newtheorem{defn}[thrm]{Definition}

\newtheorem*{eg}{Example}

\newcommand{\id}{\operatorname{id}}

\newcommand{\Cob}{\operatorname{\mathbf{Cob}}}
\newcommand{\BiAb}{\operatorname{\mathbf{Bi}}}

\makeatletter
\newcommand{\colim@}[2]{%
  \vtop{\m@th\ialign{##\cr
    \hfil$#1\operator@font colim$\hfil\cr
    \noalign{\nointerlineskip\kern1.5\ex@}#2\cr
    \noalign{\nointerlineskip\kern-\ex@}\cr}}%
}
\newcommand{\colim}{%
  \mathop{\mathpalette\colim@{\rightarrowfill@\scriptscriptstyle}}\nmlimits@
}
\renewcommand{\varprojlim}{%
  \mathop{\mathpalette\varlim@{\leftarrowfill@\scriptscriptstyle}}\nmlimits@
}
\renewcommand{\varinjlim}{%
  \mathop{\mathpalette\varlim@{\rightarrowfill@\scriptscriptstyle}}\nmlimits@
}
\makeatother

\newcommand{\tinytiny}{%
  \fontsize{5pt}{5pt}\selectfont
}
\newcommand{\tinyone}{%
  \fontsize{6pt}{6pt}\selectfont
}

\newcommand{\kcap}{\includegraphics[scale=.2,trim=0 .2cm 0 0]{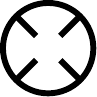}\null}
\newcommand{\kcup}{\includegraphics[scale=.15,trim=0 .2cm 0 0]{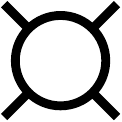}\null}

\newcommand{\kocirc}{\raisebox{-.375\height}{\includegraphics[scale=.225,trim=0 .2cm 0 0]{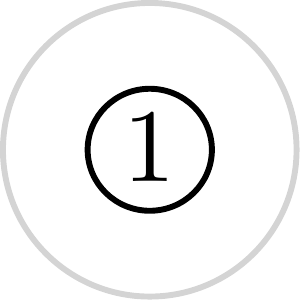}}\null}
\newcommand{\kxcirc}{\raisebox{-.375\height}{\includegraphics[scale=.225,trim=0 .2cm 0 0]{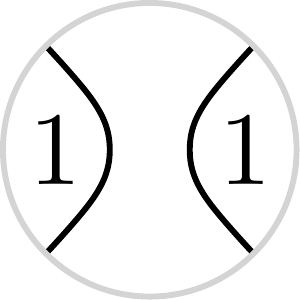}}\null}
\newcommand{\ksmooth}{\includegraphics[scale=.2,trim=0 .2cm 0 0]{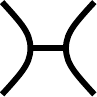}\null}

\newcommand{\kcobdo}{\raisebox{-.375\height}{\includegraphics[scale=.225,trim=0 .2cm 0 0]{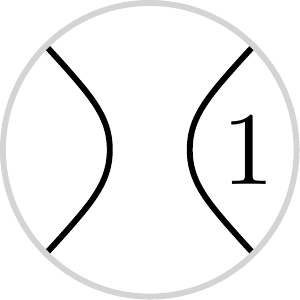}}\null}
\newcommand{\kcobdx}{\raisebox{-.375\height}{\includegraphics[scale=.225,trim=0 .2cm 0 0]{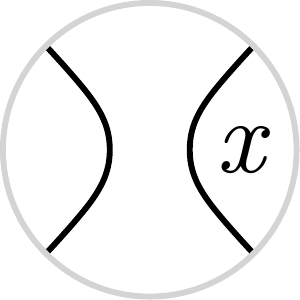}}\null}
\newcommand{\kcobdoo}{\raisebox{-.375\height}{\includegraphics[scale=.225,trim=0 .2cm 0 0]{plots/cobsmooth1oo.pdf}}\null}
\newcommand{\kcobdox}{\raisebox{-.375\height}{\includegraphics[scale=.225,trim=0 .2cm 0 0]{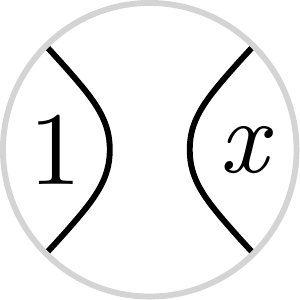}}\null}
\newcommand{\kcobdxo}{\raisebox{-.375\height}{\includegraphics[scale=.225,trim=0 .2cm 0 0]{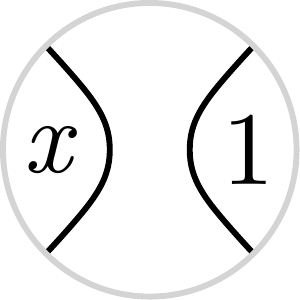}}\null}
\newcommand{\kcobdxx}{\raisebox{-.375\height}{\includegraphics[scale=.225,trim=0 .2cm 0 0]{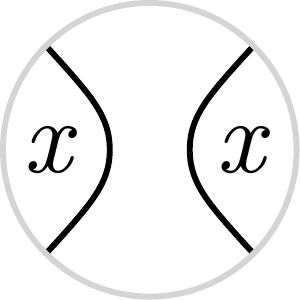}}\null}

\newcommand{\kcobio}{\raisebox{-.375\height}{\includegraphics[scale=.225,trim=0 .2cm 0 0]{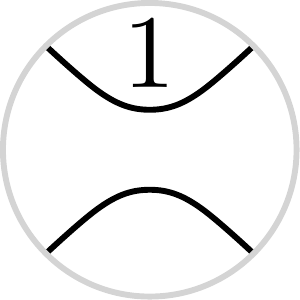}}\null}
\newcommand{\kcobix}{\raisebox{-.375\height}{\includegraphics[scale=.225,trim=0 .2cm 0 0]{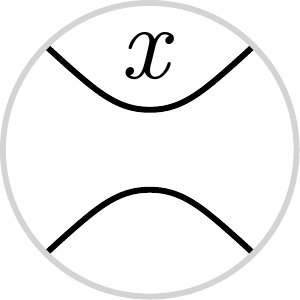}}\null}

\newcommand{\kcobiox}{\raisebox{-.375\height}{\includegraphics[scale=.225,trim=0 .2cm 0 0]{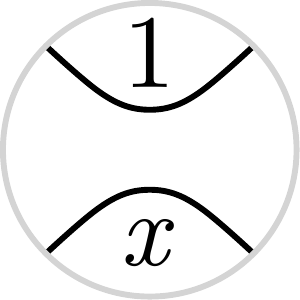}}\null}
\newcommand{\kcobixo}{\raisebox{-.375\height}{\includegraphics[scale=.225,trim=0 .2cm 0 0]{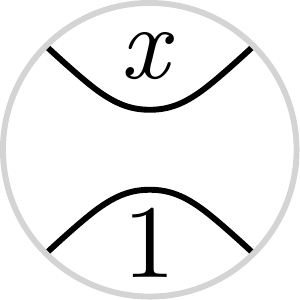}}\null}
\newcommand{\kcobixx}{\raisebox{-.375\height}{\includegraphics[scale=.225,trim=0 .2cm 0 0]{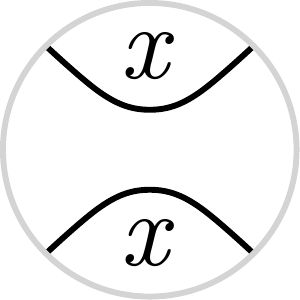}}\null}

\newcommand{\kropos}{\raisebox{-.375\height}{\includegraphics[scale=.225,trim=0 .2cm 0 0]{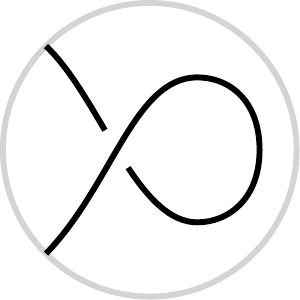}}\null}
\newcommand{\kroarc}{\raisebox{-.375\height}{\includegraphics[scale=.225,trim=0 .2cm 0 0]{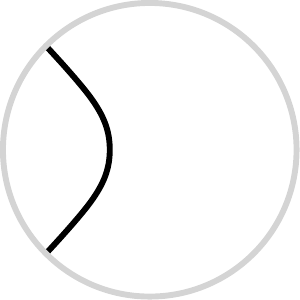}}\null}
\newcommand{\krosa}{\raisebox{-.375\height}{\includegraphics[scale=.225,trim=0 .2cm 0 0]{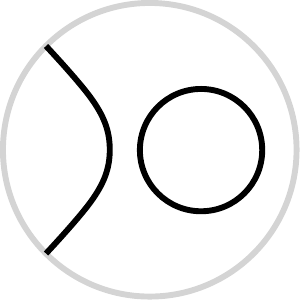}}\null}
\newcommand{\krosb}{\raisebox{-.375\height}{\includegraphics[scale=.225,trim=0 .2cm 0 0]{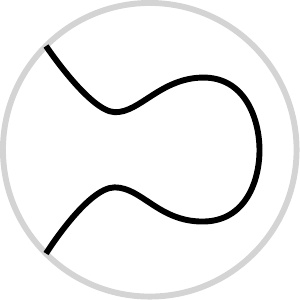}}\null}
\newcommand{\kroma}{\raisebox{-.375\height}{\includegraphics[scale=.225,trim=0 .2cm 0 0]{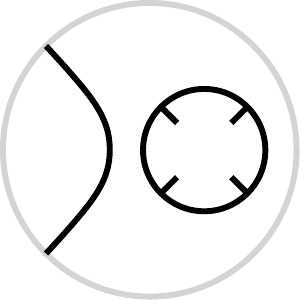}}\null}

\newcommand{\krtcr}{\raisebox{-.375\height}{\includegraphics[scale=.225,trim=0 .2cm 0 0]{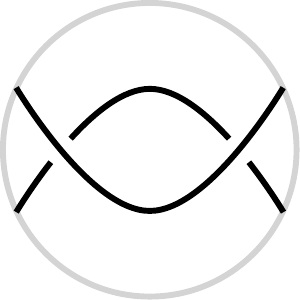}}\null}
\newcommand{\krtcrl}{\raisebox{-.375\height}{\includegraphics[scale=.225,trim=0 .2cm 0 0]{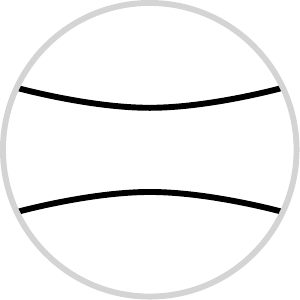}}\null}
\newcommand{\krtsa}{\raisebox{-.375\height}{\includegraphics[scale=.225,trim=0 .2cm 0 0]{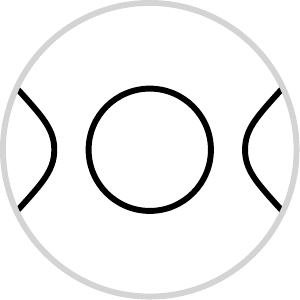}}\null}
\newcommand{\krtsb}{\raisebox{-.375\height}{\includegraphics[scale=.225,trim=0 .2cm 0 0]{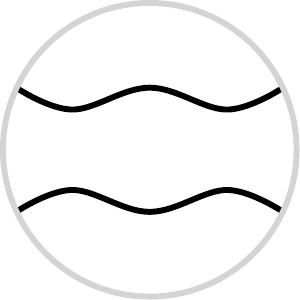}}\null}
\newcommand{\krtsc}{\raisebox{-.375\height}{\includegraphics[scale=.225,trim=0 .2cm 0 0]{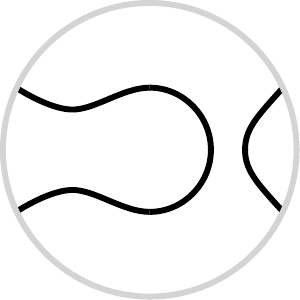}}\null}
\newcommand{\krtsd}{\raisebox{-.375\height}{\includegraphics[scale=.225,trim=0 .2cm 0 0]{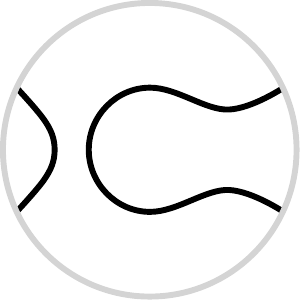}}\null}
\newcommand{\krtma}{\raisebox{-.375\height}{\includegraphics[scale=.225,trim=0 .2cm 0 0]{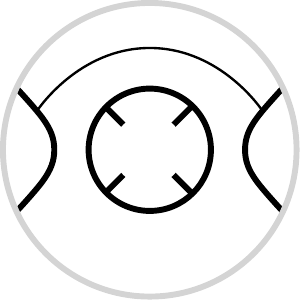}}\null}

\usepackage[margin=2.295cm]{geometry}

\title{End Khovanov homology and exotic Lagrangian planes}
\author{Yikai Teng}

\begin{document}
\begin{spacing}{1}

\begin{abstract}
    In this paper, we define the end Khovanov homology, which is an invariant of properly embedded surfaces in $\mathbb{R}^4$ up to ambient diffeomorphism. Moreover, we apply this invariant to detect the first known examples of exotic Lagrangian and symplectic planes in $\mathbb{R}^4$. 
\end{abstract}

\maketitle

\section{Introduction}

    A properly smoothly embedded plane in $\mathbb{R}^4$ is said to be an \textit{exotic plane} if it is topologically but not smoothly isotopic to the standard $xy$-plane. The first known examples of exotic planes were given by Gompf in \cite{Gom84} and by Freedman in unpublished notes.

    On the other hand, if we restrict the embedding of the plane to be algebraic, this exotic knotting behavior vanishes: it is shown in \cite{AM75} and \cite{Rud82} that any algebraically embedded plane in $\mathbb{C}^2$ is algebraically isomorphic, and hence smoothly isotopic, to the standard embedding $\mathbb{C}\hookrightarrow \mathbb{C}^2$. However, it remains unclear which additional geometric structure imposed on the smoothly embedded plane is responsible for the observed rigidity. In particular, Gompf asked in \cite{Gom25} (Problem 6.14) whether there exists any exotic Lagrangian, symplectic, or holomorphic plane in $\mathbb{R}^4\cong \mathbb{C}^2$. 
    
    In this paper, we give a positive answer to the Lagrangian and symplectic parts of this question.

    \begin{thm}{\label{thrm-exist}}
        There exists a proper embedding of a plane in $\mathbb{R}^4$ that is topologically but not smoothly isotopic to the standardly embedded plane. Moreover, this surface can be smoothly isotoped to a Lagrangian submanifold of $\mathbb{R}^4$ equipped with the standard symplectic structure.
    \end{thm}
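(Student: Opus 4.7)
The approach has three parts: construct a candidate properly embedded plane $P \subset \mathbb{R}^4$, verify that $P$ is topologically isotopic to the standard plane, and distinguish $P$ from the standard plane using the end Khovanov homology invariant; then produce, in the smooth isotopy class of $P$, an actual Lagrangian representative.

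For the construction I would start with a knot $K \subset S^3$ that is topologically slice but not smoothly slice, together with an exotic Lagrangian slice disk $D_L \subset B^4$ bounding a Legendrian representative of $K$ (recent constructions of exotic Lagrangian and symplectic disks in $B^4$ are a natural source). Glue $D_L$ into a large coordinate ball in $\mathbb{R}^4$ and extend to infinity by a Lagrangian cone on the Legendrian, producing a properly embedded Lagrangian plane $L$. Topological triviality then follows from Freedman's 4-dimensional topological category: since $K$ is topologically slice with a slice disk having complement homeomorphic to $S^1 \times B^3$, the complement $\mathbb{R}^4 \setminus L$ is homeomorphic to $S^1 \times \mathbb{R}^3$ and the link at infinity is an unknot, which together with the topological unknotting theorem give an ambient topological isotopy from $L$ to the standard plane.

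The core of the argument, and the main obstacle, is to show that the end Khovanov homology of $L$ differs from that of the standard plane. End Khovanov homology should be computable as a colimit (or inverse limit) of the Khovanov homologies of the radial slices $L \cap S^3_R$ along the cobordism maps induced by the annular pieces $L \cap (S^3 \times [R_1, R_2])$. For the standard plane the slices are unknots and the cobordism maps are identities, so the invariant reduces to the Khovanov homology of the unknot with its canonical generator. For $L$, the analogous limit is governed by the cobordism map of the exotic slice disk $D_L$, whose behavior on the canonical generator records Rasmussen-$s$-type obstructions to smooth sliceness. The key technical computation is thus a Lee-deformation--flavored argument identifying the image of the canonical generator under this cobordism map and showing it is incompatible with that of any standard disk. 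Given the functoriality of Khovanov homology under cobordisms and the well-definedness of the end construction established earlier in the paper, this computation will show that $L$ is not smoothly isotopic to the standard plane, proving both the existence of an exotic smooth plane and of its Lagrangian representative.
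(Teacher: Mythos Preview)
Your plan has a structural gap that makes the approach unworkable as stated.

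First, there is an internal contradiction: you ask for a knot $K$ that is ``topologically slice but not smoothly slice'' and simultaneously for a smooth (Lagrangian) slice disk $D_L$ for $K$. If $K$ is not smoothly slice, no such $D_L$ exists. Presumably you mean $K$ smoothly slice with an \emph{exotic} slice disk, but then the Rasmussen-$s$-type obstruction you invoke vanishes.

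More seriously, your construction is ``compact piece $+$ cylinder on $K$ to infinity.'' This forces the cross-sections $L_r$ to equal $K$ for all large $r$, so $\pi_1^\infty(\mathbb{R}^4\setminus L)\cong\pi_1(S^3\setminus K)$. If $K$ is a nontrivial knot this is not $\mathbb{Z}$, and the plane is \emph{not} topologically standard; your appeal to Freedman fails because the link at infinity is $K$, not the unknot. Conversely, if $K$ is the unknot, the end Khovanov homology is the inverse limit of a system that is eventually $Kh(U)\xleftarrow{\id}Kh(U)\xleftarrow{\id}\cdots$, identical to that of the standard plane. Note also that the compact disk $D_L$ contributes only the map into $Kh(\varnothing)$ at the \emph{start} of the inverse system and is discarded by the limit; it cannot ``govern'' the end invariant as you suggest.

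The paper avoids this dichotomy by a genuinely infinite construction: the cross-sections $L_i$ never stabilize, each cobordism $C_i$ adds a 0-handle and a 1-handle in a fixed pattern, and the nontriviality is spread over all stages. Topological standardness is then verified by explicit Kirby-calculus computations of $\pi_1$ and $\pi_1^\infty$ of the complement (both $\mathbb{Z}$), smooth nonstandardness by exhibiting compatible cycles $c_i\in CKh(L_i)$ giving a class in $\overleftharpoonup{Kh}^{-2,-3}(\Sigma)$, and the Lagrangian structure by realizing each $C_i$ via Legendrian $0$- and $1$-handle attachments. The essential point you are missing is that an exotic plane detectable by end Khovanov homology cannot have a cylindrical end on a fixed knot.
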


    One such example is the surface $\Sigma$ shown in Figure \ref{20250726-1}.

    \begin{figure}[H]
        \centering
        \begin{picture}(17cm,3.7cm)
            \put(0cm,-0.1cm){\includegraphics[width=17cm, height=4cm]{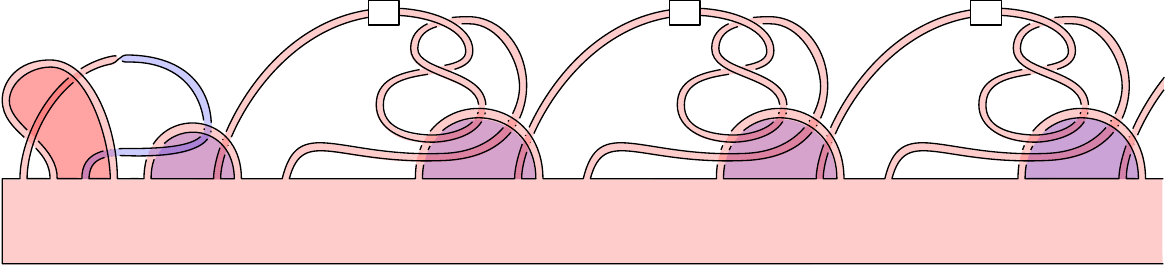}}
                                
            \put(5.4cm, 3.605cm){\scriptsize{$-2$}}
            \put(9.78cm, 3.605cm){\scriptsize{$-2$}}
            \put(14.18cm, 3.605cm){\scriptsize{$-2$}}
                
        \end{picture}        
        \caption{An exotic plane $\Sigma$ that can be smoothly isotoped to be Lagrangian.\vspace{-5pt}}
        \label{20250726-1}
    \end{figure}

    The surface $\Sigma$ can also be smoothly isotoped to be a symplectic submanifold of $(\mathbb{R}^4,\omega_{\text{std}})$; we discuss this briefly at the end of this paper. The holomorphic counterpart for this question remains an open problem (cf. \cite{Hay21c}).

    \begin{quest}
        Does there exist an exotic plane properly embedded in $\mathbb{C}^2$ that is holomorphic?
    \end{quest}

    The majority of this paper will be devoted to showing that the surface $\Sigma$ is not smoothly isotopic to the standard plane. To the author's knowledge, there are only two known methods that detect exotic planes: via the double cover of $\mathbb{R}^4$ branched along the surface, and via the minimal genus at infinity $g^\infty$ and the kinkiness at infinity $\kappa_\pm^\infty$ developed in \cite{Gom25}. In the same paper, Gompf asks (Problem 6.13) whether there is any combinatorial invariant that detects noncompact exotic surfaces properly embedded in $\mathbb{R}^4$. 
    
    In this paper, we define the \textit{co-end Khovanov homology} $\overrightharpoonup{Kh}$ and \textit{end Khovanov homology} $\overleftharpoonup{Kh}$ with $\mathbb{Z}/2$ coefficients, both of which are combinatorial invariants of surfaces embedded in $\mathbb{R}^4$ up to ambient diffeomorphisms. Our constructions are parallel to that of the end Floer homology, defined in \cite{Gad10}. We sketch the constructions of (co)end Khovanov homologies below.

    Let $\Sigma$ be a noncompact surface with finite Euler characteristic (and empty boundary) properly and smoothly embedded in $\mathbb{R}^4$. Consider a compact exhaustion of $\mathbb{R}^4$ by 4-balls $B_0\subset B_1\subset\cdots$ such that $B_0\cap \Sigma=\varnothing$. Assuming that the 3-spheres $S_i=\partial B_i$ are transverse to the surface $\Sigma$, we obtain a sequence of links $L_i=\Sigma\cap S_i$. The (co)end Khovanov homologies are defined as the (co)limits of the groups $Kh(L_i)$ under the maps induced by the cobordisms $C_i=\Sigma\cap (B_i\backslash\mathring{B}_{i-1})$ between $L_{i-1}$ and $L_i$. Moreover, the (co)end Khovanov homologies split in terms of an integral bigrading $(h,q)$.

    In this paper, we prove that the co-end Khovanov homology $\overrightharpoonup{Kh}^{h,q}$ and the end Khovanov homology $\overleftharpoonup{Kh}^{h,q}$ are invariants up to ambient diffeomorphisms. In particular, we prove the following theorem.

    \begin{thm}{\label{isotopy-inv}}
        Let $\Sigma$ and $\Sigma'$ be two properly embedded surfaces in $\mathbb{R}^4$ with finite Euler characteristics. If there is a diffeomorphism $f:(\mathbb{R}^4, \Sigma)\rightarrow(\mathbb{R}^4, \Sigma')$ between the two surfaces, then, for any bigrading $(h,q)$, there are induced isomorphisms $f_\ast: \overrightharpoonup{Kh}^{h,q}(\Sigma)\rightarrow\overrightharpoonup{Kh}^{h,q}(\Sigma')$ and $f_\ast: \overleftharpoonup{Kh}^{h,q}(\Sigma)\rightarrow\overleftharpoonup{Kh}^{h,q}(\Sigma')$ on (co)end Khovanov homologies.
    \end{thm}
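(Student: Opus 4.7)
The plan is to reduce diffeomorphism invariance to two ingredients: (i) independence of the (co)end Khovanov homology from the choice of exhaustion, and (ii) the observation that a diffeomorphism $f:(\mathbb{R}^4,\Sigma)\to(\mathbb{R}^4,\Sigma')$ carries any admissible exhaustion of the domain to an admissible exhaustion of the codomain, compatibly with the Khovanov cobordism maps at each level.

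First I would establish independence of $\overrightharpoonup{Kh}$ and $\overleftharpoonup{Kh}$ from the choice of compact ball exhaustion. Given two exhaustions $\{B_i\}$ and $\{B_i'\}$ of $\mathbb{R}^4$ by smooth 4-balls whose boundaries are transverse to $\Sigma$, one can interleave them: after passing to subsequences, there are indices with $B_i\subset B_{j(i)}'\subset B_{k(i)}\subset\cdots$, yielding mutually cofinal subsequences. Each inclusion is realized by a Khovanov cobordism map, and these fit into commutative ladders by the functoriality of Khovanov homology for generic movie presentations. The universal properties of the colimit and limit then give canonical bigrading-preserving isomorphisms between the groups computed from either exhaustion.

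Next, given $f$ and a fixed exhaustion $B_0\subset B_1\subset\cdots$ of $\mathbb{R}^4$ with each $S_i=\partial B_i$ transverse to $\Sigma$, I would consider the image sequence $f(B_0)\subset f(B_1)\subset\cdots$. Each $f(B_i)$ is a compact smooth 4-ball, exhausts $\mathbb{R}^4$, and has boundary $f(S_i)$ transverse to $\Sigma'$. The diffeomorphism $f$ restricts to diffeomorphisms of pairs $(S_i,L_i)\cong(f(S_i),L_i')$ with $L_i'=\Sigma'\cap f(S_i)$, and of cobordism pairs $(B_i\setminus\mathring B_{i-1},C_i)\cong(f(B_i)\setminus f(\mathring B_{i-1}),C_i')$. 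Since Khovanov homology and its cobordism maps are invariant under ambient diffeomorphism of $S^3$ and under smooth isotopy of cobordisms in $S^3\times[0,1]$, we obtain bigrading-preserving isomorphisms $Kh(L_i)\cong Kh(L_i')$ that intertwine the cobordism maps. Taking the colimit (respectively limit) and combining with the independence-of-exhaustion isomorphism from step one to compare $\{f(B_i)\}$ with a standard round exhaustion defining $\overrightharpoonup{Kh}(\Sigma')$ and $\overleftharpoonup{Kh}(\Sigma')$ yields the desired $f_\ast$.

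The main obstacle I anticipate is the careful bookkeeping in the interleaving step: one must arrange the nested inclusions so that each intermediate cobordism admits a well-defined Khovanov map (requiring transverse intersections and generic movie presentations, possibly after a small isotopy of the $S_i$), and verify that the resulting ladder of Khovanov groups and cobordism maps commutes on the nose rather than merely up to the natural grading shifts. Once the building blocks of exhaustion-independence and diffeomorphism-functoriality of Khovanov cobordism maps are secured, the final assembly into (co)limits is formal and gives the claimed isomorphism in every bidegree $(h,q)$.
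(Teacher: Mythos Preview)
Your proposal is correct and follows essentially the same strategy as the paper: first establish exhaustion-independence by interleaving, then push an admissible exhaustion forward along $f$ and use naturality of the Khovanov cobordism maps under diffeomorphisms to identify the two (co)limit systems level by level. The paper isolates the naturality step as a standalone lemma, proved via the Morrison--Walker--Wedrich abstract Khovanov homology for links in $3$-manifolds merely diffeomorphic to $S^3$ and cobordisms in $4$-manifolds merely diffeomorphic to $S^3\times I$; this is exactly the ingredient you invoke informally as ``diffeomorphism-functoriality of Khovanov cobordism maps,'' and you should cite that framework explicitly rather than appealing only to isotopy invariance in the standard $S^3\times[0,1]$.
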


    In particular, we show that the end Khovanov homology of the surface described in Figure \ref{20250726-1} is not isomorphic to that of the standardly embedded plane. Hence, it is not smoothly isotopic to the standard plane.

    However, it is shown in \cite{Gom25} that there exist uncountably many exotic planes in $\mathbb{R}^4.$ It remains a question whether (co)end Khovanov homologies can detect finer differences between exotic planes.

    \begin{quest}
        Can (co)end Khovanov homologies detect different exotic planes?
    \end{quest}

    Given the parallels with Gadgil's construction in \cite{Gad10}, one can ask about connections with Heegaard Floer homology. In \cite{OS05}, Ozsv\'ath-Szab\'o established a spectral sequence relating the reduced Khovanov homology $\widetilde{Kh}(L)$ of a link $L\subset S^3$ to the (hat-flavored) Heegaard Floer homology of $\Sigma_2(-L)$, the double cover of $S^3$ branched along the mirror of $L$. We ask whether such a relationship extends to our setting.

    \begin{quest}
        Let $\Sigma\subset \mathbb{R}^4$ be a noncompact, properly embedded surface with finite Euler characteristic. Is there a spectral sequence whose $E^2$-page is (a reduced version of) co-end Khovanov homology and which converges to the end Floer homology of the double cover of $\mathbb{R}^4$ branched along $\Sigma$?
    \end{quest}

    We leave these and other questions to be addressed in future work.

    \subsection{Organization of the paper}
        In Section \ref{Kho-defn}, we review the definition of Khovanov homology and the related TQFT. In Section \ref{sec-end}, we define the end Khovanov homologies, and prove Theorem \ref{isotopy-inv}. In Section \ref{exotic-construction}, we construct the surface shown in Figure \ref{20250726-1}, and show that it is indeed an exotic plane using end Khovanov homology. In Section \ref{sec-lagrangian}, we show that the surface $\Sigma$ can be isotoped to a Lagrangian (or symplectic) embedding. Finally, in the appendix, we review Khovanov homology for links in manifolds abstractly diffeomorphic to $\mathbb{R}^3$ and $S^3$, which is necessary to justify the statements in Section \ref{sec-end} rigorously.

    \subsection{Acknowledgments}
        The author is grateful to Kyle Hayden for suggesting this problem and for his ongoing guidance, and to Marco Golla for conversations that helped shape this work.

\section{Background on Khovanov homology}{\label{Kho-defn}}

    Khovanov homology is a link homology theory first introduced for links in $\mathbb{R}^3$ in \cite{Kho00}. Its functoriality in $\mathbb{R}^3\times I$ was later developed in \cite{Jac04}, making Khovanov theory a TQFT. In \cite{MWW22}, the Khovanov TQFT was extended to links in $S^3$ and link cobordisms in $S^3\times I$. In this section, we review the Khovanov homology TQFT following the paper \cite{BN05}. We will apply this TQFT for our calculations in Sections \ref{sec-end} and \ref{exotic-construction}. To avoid sign issues, throughout this paper, we assume that the base ring for all Khovanov homology is $\mathbb{Z}/2.$ 

    Given a link diagram $D$ with $n$ crossings, we denote the number of positive crossings by $n_+$, and the number of negative crossings $n_-$. We also fix an enumeration for the $n$ crossings. A \textit{smoothing} $\sigma$ of $D$ is a 1-manifold associated to the diagram by resolving each of its crossings in one of the two ways indicated in Figure \ref{resolutions}. Note that a smoothing $\sigma$ can be represented as a binary sequence $(\sigma_1,...,\sigma_n)$, determined by the type of resolution at each crossing. A \textit{labeled smoothing} $\alpha_\sigma$ is a smoothing together with a choice of labeling, either $x$ or $1$, for each connected component of the smoothing $\sigma$.

        \begin{figure}
            \centering
            \begin{picture}(10.9cm,1.5cm)
                \put(0cm,0.2cm){\includegraphics[height=1.2cm]{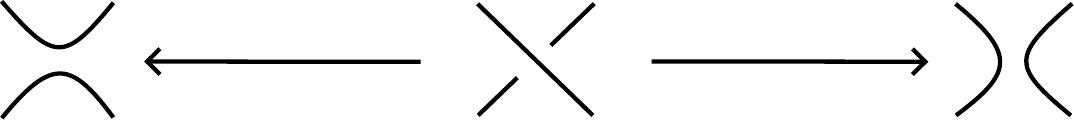}}
                \put(7.7cm,1.1cm){$D_1$}
                \put(2.6cm,1.1cm){$D_0$}
                \put(5.25cm,-0.1cm){{$D$}}
            \end{picture}        
            \caption{0- and 1- resolutions of a crossing. \vspace{-9pt}}
            \label{resolutions}
        \end{figure}  

    The \textit{Khovanov chain group} $CKh(D)$ is the vector space generated by all possible labeled smoothings for the diagram $D$. It is equipped with a bigrading: the homological grading $h$, and the quantum grading $q$. Fixing a labeled smoothing $\alpha_\sigma$, let $|\sigma|$ denote the number of 1-smoothings in $\sigma$, and let $v_+$ and $v_-$ denote the number of $1$-labels and $x$-labels in $\alpha_\sigma$. Then $h$ and $q$ are defined to be 
    $$h:= |\sigma| - n_-$$
    $$q:= v_+ - v_- + h + n_+ - n_-.$$

    The Khovanov chain group also comes with a differential $d$, making $CKh(D)$ a chain complex. Given a labeled smoothing $\alpha_\sigma$ such that $\sigma_i=0$, let $\sigma^i$ denote the same smoothing except that we replace the $i^{\text{th}}$ 0-smoothing by a 1-smoothing, and let $\alpha_{\sigma^i}$ be the labeled smoothing obtained by applying the saddle induced chain map from Table \ref{table_morse}. Then the differential is defined by
    $$d(\alpha_\sigma):= \sum_{\{i\,\mid\,\sigma_i = 0\}} \alpha_{\sigma^i}.$$

    The \textit{Khovanov homology} $Kh(D)$ is defined to be the homology of the Khovanov chain complex. Naturally, it inherits a bigrading. Note that the differential $d$ raises the homological grading $h$ by 1 and preserves the quantum grading $q$.

    Before we move on to chain maps induced by link cobordisms, we state the following theorem, which is useful to check whether a labeled smoothing is a cycle (hence representing a homology class).

    \begin{lemma}[Proposition 3.2 of \cite{Ell10}]{\label{when-cycle}}
        A labeled smoothing $\alpha_\sigma$ is a cycle if and only if every 0-smoothing, when changed into a 1-smoothing, merges two $x$-labeled components.
    \end{lemma}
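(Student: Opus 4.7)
The plan is to reduce the cycle condition $d(\alpha_\sigma)=0$ to a vanishing condition at each individual crossing, and then read off what this means in terms of the local Frobenius algebra maps associated to saddles.

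First I would observe that the sum $d(\alpha_\sigma)=\sum_{i:\sigma_i=0}\alpha_{\sigma^i}$ is a sum of labeled smoothings whose underlying smoothings $\sigma^i$ are all distinct (they differ from $\sigma$ in exactly one coordinate, namely coordinate $i$). Since labeled smoothings over different underlying smoothings form independent summands of the Khovanov chain group, there can be no cancellation between the terms $\alpha_{\sigma^i}$. Working over $\mathbb{Z}/2$, this means $d(\alpha_\sigma)=0$ if and only if $\alpha_{\sigma^i}=0$ as an element of $CKh(D)$ for every $i$ with $\sigma_i=0$.

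Next I would analyze $\alpha_{\sigma^i}$ locally using the saddle cobordism rule from Table \ref{table_morse}. At the $i$-th crossing, two possibilities arise depending on the global smoothing $\sigma$: either (a) the two arcs entering the crossing lie on two different components of $\sigma$, so that passing from $\sigma$ to $\sigma^i$ \emph{merges} these two components, or (b) the two arcs lie on the same component, so that passing to $\sigma^i$ \emph{splits} that component. In case (a) the local map is $m:V\otimes V\to V$ with $m(1\otimes 1)=1$, $m(1\otimes x)=m(x\otimes 1)=x$, $m(x\otimes x)=0$, so $\alpha_{\sigma^i}=0$ exactly when both merged components are labeled $x$. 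In case (b) the local map is $\Delta:V\to V\otimes V$ with $\Delta(1)=1\otimes x+x\otimes 1$ and $\Delta(x)=x\otimes x$; both images are nonzero, so $\alpha_{\sigma^i}$ is never zero when a split occurs.

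Combining the two cases, the condition "$\alpha_{\sigma^i}=0$ for every $i$ with $\sigma_i=0$" is equivalent to saying that no such $i$ gives a split (i.e.\ every 0-smoothing becomes a 1-smoothing via a merge) and that every such merge combines two $x$-labeled components. This is precisely the statement of the lemma. The one subtlety worth being careful about, and the only place where anything might go wrong, is making sure that the local saddle map in case (a) really does act tensorially on the labels of the two merged components without interference from the other components of $\sigma$; this follows directly from the way the Khovanov TQFT assigns maps componentwise, so it is a bookkeeping remark rather than a genuine obstacle.
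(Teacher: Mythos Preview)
Your argument is correct. The paper does not actually prove this lemma; it simply cites it as Proposition~3.2 of \cite{Ell10} and uses it as a black box. Your proof is the standard elementary argument and would serve perfectly well: the key observations---that the summands $\alpha_{\sigma^i}$ live over pairwise distinct smoothings $\sigma^i$ and hence cannot cancel, and that the split map $\Delta$ is injective while the merge map $m$ vanishes exactly on $x\otimes x$---are precisely what is needed. One minor remark: your appeal to $\mathbb{Z}/2$ coefficients is unnecessary at the step where you rule out cancellation between different $\alpha_{\sigma^i}$, since those terms already lie in distinct direct summands of $CKh(D)$; the argument goes through over any coefficient ring.
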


    Next we review the Khovanov chain map induced by link cobordisms. Given a link cobordism $\Sigma\subset S^3\times I$ between link diagrams $D_0$ and $D_1$, there is a natural bigraded chain map 
    $$CKh(\Sigma): CKh^{h,q}(D_0)\rightarrow CKh^{h,q+\chi(\Sigma)}(D_1)$$
    that induces a homology level homomorphism $Kh(\Sigma): Kh^{h,q}(D_0)\rightarrow Kh^{h,q+\chi(\Sigma)}(D_1)$. In \cite{Jac04}, Jacobsson shows that this map is actually an isotopy invariant.

    A cobordism between link diagrams can be expressed as a finite sequence of elementary cobordisms: three Reidemeister moves, and three Morse moves. Thanks to the functoriality of Khovanov theory, in order to construct the induced map $Kh(\Sigma): Kh(L_0)\rightarrow Kh(L_1)$, we only need to define the induced maps for the elementary cobordisms. The relevant constructions are shown in Tables \ref{table_morse} and \ref{table_reidemeister_redux} below. For a full list of moves, we refer the readers to \cite{BN05}.

    \begin{table}
        \center
        \renewcommand{\arraystretch}{2.25}
        \begin{tabular}{|c|c|c|l|}
            \hline
            \text{Morse Move} & \text{Ornament} & \text{Chain map} &  \multicolumn{1}{c|}{\text{Definition of chain map}} \\
            \hline
            birth & \kcup & $\iota$ & $\begin{array}{l} \hspace{1.25em} 1 \hspace{1.3em} \mapsto \kocirc \vspace{.425em} \end{array}$ \\
            \hline
            death & \kcap & $\varepsilon$ & $\begin{array}{l} \kocirc \mapsto \hspace{1.25em} 0 \\ \kxcirc \mapsto \hspace{1.25em} 1 \vspace{.425em} \end{array}$ \\
            \hline
            \multirow{2}{80pt}{\hspace{2.15em}\vspace{-5em}saddle} & $\begin{array}{l} \vspace{-6em}\ksmooth \end{array}$ & $m$ & $\begin{array}{l} \kcobdoo \mapsto \kcobio \\ \kcobdox \mapsto \kcobix \\ \kcobdxo \mapsto \kcobix \\ \kcobdxx \mapsto \hspace{1.25em} 0 \vspace{.425em} \end{array}$ \\\cline{3-4} & & $\Delta$ & $\begin{array}{l} \kcobdo \mapsto \kcobiox + \kcobixo \\ \kcobdx \mapsto \kcobixx \vspace{.425em} \end{array}$ \\
            \hline
        \end{tabular}
        \renewcommand{\arraystretch}{1}

        \caption{The chain maps induced by Morse moves. This table is from \cite{HS24}.\vspace{5pt}}
        \label{table_morse}
    \end{table}

    \begin{table}\center
	\renewcommand{\arraystretch}{2.25}
	\begin{tabular}{|c|c|c|}
		\hline
		Reidemeister move & Smoothing & Induced chain map \\
		\hline
		\multirow{2}{80pt}{\begin{minipage}{9em}
				$\kropos \to \kroarc$
			\end{minipage}} & \krosa & \kroma $\begin{array}{c}\hspace{-1em}\vspace{.425em}\end{array}$ \\\cline{2-3} & \krosb & $\begin{array}{c} 0 \vspace{.425em} \end{array}$ \\
		\hline
		\multirow{2}{80pt}{\begin{minipage}{9em}
				$\krtcr \to \krtcrl$
		\end{minipage}} & \krtsa & $-$ \!\! \krtma $\begin{array}{c}\hspace{.25em}\vspace{.425em}\end{array}$ \\\cline{2-3} & \krtsb & \krtcrl $\begin{array}{c}\hspace{-1em}\vspace{.425em}\end{array}$ \\\cline{2-3} & \krtsc & $\begin{array}{c} 0 \vspace{.425em} \end{array}$ \\\cline{2-3} & \krtsd & $\begin{array}{c} 0 \vspace{.425em} \end{array}$ \\
		\hline
	\end{tabular}
	\renewcommand{\arraystretch}{1}

        \captionsetup{width = 13.2cm}
        \caption{The relevant chain maps induced by Reidemeister I and II moves. This table is adapted from \cite{HS24}.}
	\label{table_reidemeister_redux}
    \end{table}
            
        \subsection{Abstract Khovanov homology}{\label{review-abs}}

            In \cite{MWW22}, Morrison-Walker-Wedrich generalized the Khovanov TQFT to links in 3-manifolds abstractly diffeomorphic to $S^3$ and link cobordisms in 4-manifolds abstractly diffeomorphic to $S^3\times I$.

            Given a link $L$ in a 3-manifold $S$ abstractly diffeomorphic to $S^3$, we denote its abstract Khovanov homology by $Kh(S,L)$. Given a link cobordism $(W,\Sigma): (S_0,L_0)\rightarrow (S_1,L_1)$ where $W$ is abstractly diffeomorphic to $S^3\times I$, we denote the associated cobordism map by $Kh(W,\Sigma): Kh(S_0,S_1)\rightarrow Kh(S_1,L_1)$. To establish the invariance of the end Khovanov homologies later in Section \ref{sec-end}, we rely on the naturality of this theory under ambient diffeomorphisms. In particular, we have the following key lemma.

            \begin{lemma}{\label{iso-natural}}

                Let $(W,\Sigma): (S_0,L_0)\rightarrow (S_1,L_1)$ and $(W',\Sigma'): (S'_0,L'_0)\rightarrow (S'_1,L'_1)$ be link cobordisms where $W,W'$ are abstractly diffeomorphic to $S^3\times I$. Then any diffeomorphism of pairs $f:(W,\Sigma)\rightarrow (W',\Sigma')$ induces bigrading-preserving natural isomorphisms $f_\ast:Kh^{h,q}(S_0,L_0)\rightarrow Kh^{h,q}(S'_0,L'_0)$ and $f_\ast:Kh^{h,q}(S_1,L_1)\rightarrow Kh^{h,q}(S'_1,L'_1)$ for each bigrading $(h,q)$. Namely, the following diagram commutes:
                \begin{center}
                    \begin{tikzcd}
                        {Kh^{h,q}(S_0,L_0)} \arrow[d, "f_\ast"] \arrow[rr, "{Kh(W,\Sigma)}"] &  & {Kh^{h,q+\chi(\Sigma)}(S_1,L_1)} \arrow[d, "f_\ast"] \\
                        {Kh^{h,q}(S'_0,L'_0)} \arrow[rr, "{Kh(W',\Sigma')}"]                 &  & {Kh^{h,q+\chi(\Sigma')}(S'_1,L'_1)}                   
                    \end{tikzcd}
                \end{center}
            \end{lemma}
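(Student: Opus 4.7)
The plan is to reduce the lemma to the standard naturality of the Khovanov TQFT under self-diffeomorphisms of $S^3$ and $S^3\times I$, by using the diffeomorphism $f$ itself to produce compatible identifications with the standard model. First I would restrict $f$ to the two boundary components to obtain diffeomorphisms of pairs $f_i := f|_{S_i}\colon (S_i, L_i) \to (S'_i, L'_i)$ for $i = 0, 1$. These are the maps that we will show induce the desired isomorphisms $f_\ast$. The bigrading is preserved because the homological and quantum gradings $(h,q)$ are defined from data (crossing signs, resolutions, labels) that transport naturally under diffeomorphisms of pairs, and the Euler-characteristic shift on the cobordism map is likewise preserved since $\chi(\Sigma) = \chi(\Sigma')$.

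For the commutativity of the square, I would fix a diffeomorphism $\Psi\colon W \to S^3 \times I$ and transport it via $f$, setting $\Psi' := \Psi \circ f^{-1}\colon W' \to S^3 \times I$. Writing $\psi_i, \psi'_i$ for the corresponding boundary restrictions, one immediately checks that $\psi'_i \circ f_i = \psi_i$, whence $\psi_i(L_i) = \psi'_i(L'_i)$ as links in $S^3$ and $\Psi(\Sigma) = \Psi'(\Sigma')$ as surfaces in $S^3 \times I$. With respect to these compatible identifications, the two horizontal arrows of the diagram become literally the same Khovanov TQFT map $Kh(\Psi(\Sigma))$, and the two vertical arrows $f_\ast$ reduce to the identity. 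Hence the square commutes tautologically under this choice.

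The remaining, and main, obstacle is to verify that the resulting isomorphism $f_\ast$ does not depend on the auxiliary choice of $\Psi$. Passing to another $\tilde\Psi$ produces a self-diffeomorphism $g := \tilde\Psi \circ \Psi^{-1}$ of $S^3 \times I$ carrying $\Psi(\Sigma)$ to $\tilde\Psi(\Sigma)$, and I would need to show that the two resulting presentations of $Kh(W,\Sigma)$ together with their boundary restrictions fit into a commutative square intertwined by $g$. This is exactly the ambient-diffeomorphism naturality of the Morrison–Walker–Wedrich Khovanov TQFT: a diffeomorphism of a pair in the standard model $S^3 \times I$ induces the expected intertwining on Khovanov groups and cobordism maps. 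One proves it by putting $g$ into a generic family, reading off a movie of Reidemeister and Morse moves between the two link-cobordism presentations, and then invoking Jacobsson functoriality (whose extension to the $S^3$ setting is the content of the appendix). Once this foundational input is in place, the construction is canonical and the square of the lemma commutes.
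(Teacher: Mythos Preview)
Your proposal is correct and uses the same core trick as the paper---transporting the identification via $\Psi' := \Psi\circ f^{-1}$ so that both cobordisms land on literally the same surface in the standard model---but you deploy it differently. You define $f_\ast$ directly by restricting $f$ to the boundary and reading it off in coordinates, so that the square commutes tautologically once compatible charts are chosen; the remaining work is then to show your $f_\ast$ is independent of $\Psi$. The paper instead \emph{defines} $f_\ast$ as the Khovanov map of the mapping-cylinder cobordism $S_i\times I\cup_{f_i} S'_i\times I$, proves it is an isomorphism by exhibiting an inverse, and then deduces commutativity by observing that $(S'_0\times I)\cup_{f^{-1}} W\cup_f (S'_1\times I)$ is diffeomorphic rel boundary to $W'$ and invoking the preliminary result that diffeomorphic-rel-boundary cobordisms induce equal maps (Theorem~\ref{quotient-category}). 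That preliminary result is where the $\Psi' = \Psi\circ f^{-1}$ trick appears in the paper's argument. Your route is a bit more direct; the paper's route has the conceptual payoff that $f_\ast$ is genuinely a morphism in the cobordism category.

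One point to tighten: the independence of $\Psi$ that you flag as the ``main obstacle'' is not really proved by putting $g$ into a generic family and chasing movie moves. In the Morrison--Walker--Wedrich setup, $Kh(S,L)$ is by definition the space of flat sections of a bundle over the space of identifications, and the cobordism map is defined so that it is independent of the chosen identification (this is exactly what Lemmas~\ref{auxilliary-R3} and~\ref{auxilliary-S3} verify). So the independence you need is already built into the abstract definition, and you can simply cite it rather than rederive it from Jacobsson-style functoriality.
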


            For a more detailed exposition of abstract Khovanov homology, we refer the readers to the appendix.                

    \section{End Khovanov Homologies}{\label{sec-end}}

        Let $\Sigma$ be a noncompact surface (with empty boundary) properly and smoothly embedded in $\mathbb{R}^4$. Further assume that $\Sigma$ has finite Euler characteristic. In this section we construct the co-end Khovanov homology $\overrightharpoonup{Kh}^{h,q}(\Sigma)$ and the end Khovanov homology $\overleftharpoonup{Kh}^{h,q}(\Sigma)$ for the surface $\Sigma$.
        
        Let $B_0 \subset B_1 \subset B_2\subset\cdots$ be a compact exhaustion of $\mathbb{R}^4$ by 4-balls $B_i$ such that $B_0\cap \Sigma=\varnothing$. Denote the 3-sphere $\partial B_i$ by $S_i$ and the surface $B_i\cap \Sigma$ by $\Sigma_i$. After perturbing the surface $\Sigma$ slightly so that it is transverse to the spheres $S_i$, we obtain a sequence of links $L_i = S_i\cap \Sigma$. In this sense $\varnothing = \Sigma_0\subset \Sigma_1\subset  \Sigma_2\subset \cdots$ is a compact exhaustion for the surface $\Sigma$, and each $C_i:=\Sigma_i\backslash\mathring{\Sigma}_{i-1}$ is a cobordism from $L_{i-1}$ to $L_i$ in the 4-manifold $W_i:=B_i\backslash \mathring{B}_{i-1}$, which is abstractly isomorphic to $S^3\times I$. A schematic is shown in Figure \ref{schematics}.

            \begin{figure}[H]
                \centering
                \begin{picture}(7cm,2.8cm)
                    \put(0cm,0.3cm){\includegraphics[height=2.5cm]{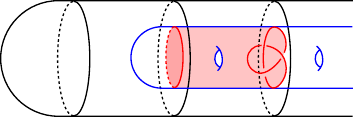}}
                    \put(1.4cm,-0.1cm){$S_0$}
                    \put(3.6cm,-0.1cm){$S_1$}
                    \put(5.7cm,-0.1cm){$S_2$}

                    \put(2.8cm,0.7cm){\textcolor{blue}{$\Sigma$}}
                    \put(3.6cm,0.6cm){\textcolor{red}{\scriptsize{$L_1$}}}
                    \put(5.7cm,0.6cm){\textcolor{red}{\scriptsize{$L_2$}}}
                    \put(4.5cm,0.6cm){\textcolor{red}{\scriptsize{$C_2$}}}

                    \put(3.6cm,2.9cm){\scriptsize{$W_i\cong S^3\times [i-1,i]$}}
                \end{picture}        
                \caption{A schematic for the surface $\Sigma\subset \mathbb{R}^4$.}
                \label{schematics}
            \end{figure} 

        Taking the Khovanov homology, we have a direct system:
        $$Kh(\varnothing)\cong Kh(L_0) \xrightarrow{Kh(W_1,C_1)} Kh(L_1) \xrightarrow{Kh(W_2,C_2)} Kh(L_2)\xrightarrow{Kh(W_3,C_3)} \cdots$$
        We define the \textit{co-end Khovanov homology} $\overrightharpoonup{Kh}(\Sigma)$ to be the direct limit of this direct system.

        Next we describe the bigrading of the co-end Khovanov homology inherited from regular Khovanov homology. As the Euler characteristic of the surface $\Sigma$ is assumed to be finite, we can make a degree shift (depending on the Euler characteristic of the surface $\Sigma\backslash\mathring{\Sigma}_i$), so that all the cobordism-induced-maps $Kh(W_i,C_i)$ in the shifted direct system preserve the bigrading of the original Khovanov homology. Under this degree shift, the direct system splits, for each bigrading $(h,q)$, into the direct sum of the following direct systems:
        $$Kh^{h,q-\chi(\Sigma\backslash \mathring{\Sigma}_0)}(L_0) \xrightarrow{Kh(W_1,C_1)} Kh^{h,q-\chi(\Sigma\backslash \mathring{\Sigma}_1)}(L_1) \xrightarrow{Kh(W_2,C_2)} Kh^{h,q-\chi(\Sigma\backslash \mathring{\Sigma}_2)}(L_2)\xrightarrow{Kh(W_3,C_3)}\cdots $$        
        We define $\overrightharpoonup{Kh}^{h,q}(\Sigma)$ to be the colimit of this direct system, and it follows that $\overrightharpoonup{Kh}(\Sigma)\cong \bigoplus_{h,q} \overrightharpoonup{Kh}^{h,q}(\Sigma)$.
        
        We can also consider the opposite link cobordism $(\overline{W}_i,\overline{C}_i): (S_i,L_i)\rightarrow(S_{i-1},L_{i-1})$. Again by taking the Khovanov homology, we have the inverse system:
        $$Kh(\varnothing)\cong Kh(L_0) \xleftarrow{Kh(\overline{W}_1,\overline{C}_1)} Kh(L_1) \xleftarrow{Kh(\overline{W}_2,\overline{C}_2)} Kh(L_2)\xleftarrow{Kh(\overline{W}_3,\overline{C}_3)} \cdots$$

        We define the \textit{end Khovanov homology} $\overleftharpoonup{Kh}(\Sigma)$ as the inverse limit of this inverse system. Also similarly to the previous case, after a degree shift determined by the Euler characteristic of $\Sigma\backslash\mathring{\Sigma}_i$, the inverse system above splits, for each pair $(h,q)$, into the direct product of the following inverse systems:
        $$Kh^{h,q+\chi(\Sigma\backslash \mathring{\Sigma}_0)}(L_0) \xleftarrow{Kh(\overline{W}_1,\overline{C}_1)} Kh^{h,q+\chi(\Sigma\backslash \mathring{\Sigma}_1)}(L_1) \xleftarrow{Kh(\overline{W}_2,\overline{C}_2)} Kh^{h,q+\chi(\Sigma\backslash \mathring{\Sigma}_2)}(L_2)\xleftarrow{Kh(\overline{W}_3,\overline{C}_3)}\cdots $$
        We define $\overleftharpoonup{Kh}^{h,q}(\Sigma)$ to be the limit of this inverse system, and it follows that $\overleftharpoonup{Kh}(\Sigma)\cong \prod_{h,q} \overleftharpoonup{Kh}^{h,q}(\Sigma).$ 

        \begin{remark}
            Note that the end Khovanov homology $\overleftharpoonup{Kh}(\Sigma)$ is the direct product (instead of direct sum) of each $\overleftharpoonup{Kh}^{h,q}(\Sigma)$. This means that the end Khovanov homology $\overleftharpoonup{Kh}(\Sigma)$ is, in general, not a bigraded vector space, since $\overleftharpoonup{Kh}^{h,q}(\Sigma)$ can be nontrivial for infinitely many pairs of $(h,q)$. However, this difference in algebraic structures does not play an important role in this paper. For simplicity, we will still refer to $\overleftharpoonup{Kh}(\Sigma)$ as bigraded vector spaces, the pair $(h,q)$ as bigradings, and $\overleftharpoonup{Kh}^{h,q}(\Sigma)$ as homogeneous spaces.
        \end{remark}

        \begin{remark}
            If the surface $\Sigma$ has infinite Euler characteristic, then the $q$-grading shift defined above is no longer applicable. However, the (co-)end Khovanov homologies still split with respect to the homological grading $h$, and thus can still be thought of as a (singly) graded vector space over $\mathbb{Z}/2$.
        \end{remark}

        \begin{lemma}
            The co-end Khovanov homology $\overrightharpoonup{Kh}^{h,q}(\Sigma)$ and end Khovanov homology $\overleftharpoonup{Kh}^{h,q}(\Sigma)$ are independent of the chosen exhaustion.
        \end{lemma}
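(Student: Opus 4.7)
The plan is to run a cofinality argument: given two exhaustions $\{B_i\}$ and $\{B'_j\}$, I would interleave them into a common exhaustion $\{B''_k\}$, and show that each of the original direct and inverse systems computes the same (co)limit as the interleaved system. The main input is the functoriality of the Khovanov TQFT for cobordisms, together with the standard categorical fact that passing to a cofinal subsequence of a direct or inverse sequence does not change its (co)limit.

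First, I would construct the interleaving. Since each $B_i$ is compact and $\{B'_j\}$ exhausts $\mathbb{R}^4$, there exists $j(i)$ with $B_i\subset B'_{j(i)}$, and symmetrically for the other direction. By induction one extracts subsequences $\{B_{i_k}\}$ and $\{B'_{j_k}\}$ with $B_{i_1}\subset B'_{j_1}\subset B_{i_2}\subset B'_{j_2}\subset\cdots$. After a small perturbation of $\Sigma$ (arranged once and for all so that it is transverse to every sphere involved, which is generic), this gives an exhaustion $\{B''_k\}$ whose even-indexed subsequence is $\{B_{i_k}\}$ and whose odd-indexed subsequence is $\{B'_{j_k}\}$.

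Second, I would prove the general subsequence lemma: if $\{B_{i_k}\}$ is a subsequence of an exhaustion $\{B_i\}$, then the direct (resp.\ inverse) system of Khovanov homologies along $\{B_{i_k}\}$ has the same colimit (resp.\ limit) as the one along $\{B_i\}$. The point is that the cobordism $\Sigma\cap(B_{i_{k+1}}\setminus\mathring B_{i_k})$ decomposes as the composition of the cobordisms $C_{i_k+1},\ldots,C_{i_{k+1}}$, and by functoriality of the Khovanov TQFT for cobordisms in manifolds abstractly diffeomorphic to $S^3\times I$ (Lemma \ref{iso-natural} and the MWW extension reviewed in Section \ref{review-abs}), the induced map is the corresponding composite of $Kh(W_\ell,C_\ell)$'s. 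Hence $\{Kh(L_{i_k})\}$ is a cofinal subsystem of $\{Kh(L_i)\}$, and cofinal subsystems compute the same (co)limit in the category of bigraded $\mathbb{Z}/2$-vector spaces. Applied to $\{B''_k\}$ versus each of $\{B_i\}$ and $\{B'_j\}$, this gives canonical isomorphisms
\[
  \overrightharpoonup{Kh}^{h,q}_{\{B_i\}}(\Sigma)\;\cong\;\overrightharpoonup{Kh}^{h,q}_{\{B''_k\}}(\Sigma)\;\cong\;\overrightharpoonup{Kh}^{h,q}_{\{B'_j\}}(\Sigma),
\]
and likewise for $\overleftharpoonup{Kh}^{h,q}$.

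The main obstacle I anticipate is bookkeeping rather than a conceptual difficulty. One must check that the quantum-grading shift by $\chi(\Sigma\setminus\mathring\Sigma_i)$ is compatible with passage to subsequences: for consecutive indices $i_k<i_{k+1}$ in the subsequence, additivity of Euler characteristic gives $\chi(\Sigma\setminus\mathring\Sigma_{i_k})-\chi(\Sigma\setminus\mathring\Sigma_{i_{k+1}})=\sum_{\ell=i_k+1}^{i_{k+1}}\chi(C_\ell)$, which exactly matches the total $q$-shift of the composite cobordism map, so the shifted sub-system still preserves bigrading. A secondary technical point is that two different transverse perturbations of $\Sigma$ against the same sphere give isotopic links, and the cobordism-induced isomorphism between their Khovanov homologies is canonical by Lemma \ref{iso-natural}; this is what makes the identifications between the three systems functorial rather than merely unspecified isomorphisms.
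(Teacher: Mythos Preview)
Your proposal is correct and follows essentially the same approach as the paper: interleave the two exhaustions into a common one and use that passing to a cofinal subsequence of a direct (resp.\ inverse) system does not change the (co)limit. The paper's proof is terser and omits the grading-shift and transversality bookkeeping you spell out, but the argument is the same.
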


        \begin{proof}
            This proof is analogous to the proof of Proposition 3.4 of \cite{Gad10}. We first note that by the definition of the direct (or inverse) limit, the limit does not change when passing to a subsequence of the direct (resp. inverse) system. 

            Consider two compact exhaustions for the pair $(\mathbb{R}^4,\Sigma)$: $$(B_0,\Sigma_0)\subset (B_1,\Sigma_1)\subset (B_2,\Sigma_2)\subset \cdots$$ and $$(B'_0,\Sigma'_0)\subset (B'_1,\Sigma'_1)\subset (B'_2,\Sigma'_2)\subset \cdots$$
            
            We can assume, by passing to subsequences, that $$(B_0,\Sigma_0)\subset (B'_0,\Sigma'_0)\subset (B_1,\Sigma_1)\subset (B'_1,\Sigma'_1)\subset\cdots$$ However, the two original exhaustions are both sub-exhaustions of the combined exhaustion, which indicates that all three exhaustions give isomorphic direct (resp. inverse) limits.
        \end{proof}

        Next we verify that the end and co-end Khovanov homologies are actually invariants up to ambient diffeomorphisms. In particular, we establish Theorem \ref{isotopy-inv}, which asserts that a diffeomorphism $f:(\mathbb{R}^4, \Sigma)\rightarrow(\mathbb{R}^4, \Sigma')$ between two embedded surfaces with finite Euler characteristics induces homogeneous isomorphisms $f_\ast: \overrightharpoonup{Kh}(\Sigma)\rightarrow\overrightharpoonup{Kh}(\Sigma')$ and $f_\ast: \overleftharpoonup{Kh}(\Sigma)\rightarrow\overleftharpoonup{Kh}(\Sigma')$.

        \begin{proof}[Proof of Theorem \ref{isotopy-inv}]
            We only prove the co-end Khovanov homology case. The end version is similar.

            By Lemma \ref{iso-natural}, the diffeomorphism $f$, after restrictions, induces isomorphisms 
            $$f_\ast: \overrightharpoonup{Kh}^{h,q-\chi(\Sigma\backslash\mathring{\Sigma}_i)}(S_i,L_i)\rightarrow \overrightharpoonup{Kh}^{h,q-\chi(\Sigma'\backslash\mathring{\Sigma}'_i)}(S'_i, L'_i).$$
            
            Note that $\Sigma\backslash\mathring{\Sigma}_i$ and $\Sigma'\backslash\mathring{\Sigma}'_i$ have the same Euler characteristic (which we will denote by $\chi_i$ for convenience), so $f_\ast$ is bigrading-preserving as expected. Now Lemma \ref{iso-natural} ensures that the following diagram commutes.
            \begin{center}
                \vspace{-10pt}
                \begin{tikzcd}
                    {Kh^{h,q-\chi_0}(S_0,L_0)} \arrow[d, "f_\ast"] \arrow[rr, "{Kh(W_1,C_1)}"] &  & {Kh^{h,q-\chi_1}(S_1,L_1)} \arrow[d, "f_\ast"] \arrow[rr, "{Kh(W_2,C_2)}"] &  & {Kh^{h,q-\chi_2}(S_2,L_2)} \arrow[d, "f_\ast"] \arrow[rr, "{Kh(W_3,C_3)}"] &  & \cdots \\
                    {Kh^{h,q-\chi_0}(S'_0,L'_0)} \arrow[rr, "{Kh(W'_1,C'_1)}"]                &  & {Kh^{h,q-\chi_1}(S'_1,L'_1)} \arrow[rr, "{Kh(W'_2,C'_2)}"]                &  & {Kh^{h,q-\chi_2}(S'_2,L'_2)} \arrow[rr, "{Kh(W'_3,C'_3)}"]               &  & \cdots
                \end{tikzcd}
            \end{center}

            As $f$ induces a natural isomorphism between the two direct systems, it follows that $f$ induces an isomorphism between their direct limits.
        \end{proof}

        \begin{remark}
            Although we need abstract Khovanov homology to establish the smooth invariance of the (co)end Khovanov homology, in practice we usually use round 4-balls in the compact exhaustion for $\mathbb{R}^4.$ In the following sections of this paper, we will carry out all Khovanov homology calculations using only standard Khovanov theory in the standard $S^3$ (cf. Section \ref{Kho-defn}).
        \end{remark}

        \begin{eg}[Standard $\mathbb{R}^2$ in $\mathbb{R}^4$]
            As an example, we calculate the end and co-end Khovanov homologies for the standard $\mathbb{R}^2$ in $\mathbb{R}^4$ (i.e., the $xy$-plane of 4-space). Using the standard exhaustion with $B_i$ being the 4-ball of radius $i$ (with the exception of $B_0$ being a small 4-ball in $B_1$ not intersecting the $xy$-plane), we get the following calculations: 
            \begin{align*}
                \overrightharpoonup{Kh}^{h,q}(\mathbb{R}^2)&:=\colim_{i\in\mathbb{Z}} \left( Kh^{h,q-1}(\varnothing)\xrightarrow{\iota_\ast} Kh^{h,q}(U)\xrightarrow{\id} Kh^{h,q}(U)\xrightarrow{\id} \cdots \right)\\
                &\cong Kh^{h,q}(U) \\
                &\cong \begin{cases}
                    \mathbb{Z}/2 & \text{if } (h,q)=(0,\pm 1)\\
                    0 & \text{else}
                \end{cases}
            \end{align*}
            \begin{align*}
                \overleftharpoonup{Kh}^{h,q}(\mathbb{R}^2)&:=\varprojlim_{i\in\mathbb{Z}} \left( Kh^{h,q+1}(\varnothing)\xleftarrow{\iota^\ast} Kh^{h,q}(U)\xleftarrow{\id} Kh^{h,q}(U)\xleftarrow{\id} \cdots \right)\\
                &\cong Kh^{h,q}(U)\\
                &\cong \begin{cases}
                    \mathbb{Z}/2 & \text{if } (h,q)=(0,\pm 1)\\
                    0 & \text{else}
                \end{cases}
            \end{align*}
        \end{eg}

\section{The plane is exotic}{\label{exotic-construction}}

    In this section, we construct the surface $\Sigma$ shown in Figure \ref{20250726-1} by describing a compact exhaustion, which we will use to prove that the surface $\Sigma$ is an exotic plane and can be smoothly isotoped to a Lagrangian surface in $\mathbb{R}^4$.

    Let $B_0$ be a small 4-ball centered at the origin of $\mathbb{R}^4$, and let $B_i$ be the standard ball of radius $i$ centered at the origin. Then the family $\{B_i\}_{i\in\mathbb{Z}}$ is a compact exhaustion for the standard $\mathbb{R}^4$. Just like the setup in Section \ref{sec-end}, let $S_i$ be the boundary of $B_i$, and let $W_i:=B_i\backslash B_{i-1}$ be the cobordism between $S_{i-1}$ and $S_i$. We describe the surface $\Sigma$ by describing its intersection with each $S_i$, which we denote by $L_i$, its intersection with $B_i$, which we denote by $\Sigma_i$, and its intersection with each $W_i$, which we denote by $C_i$.

    Let $L_0$ be the empty link in $B_0$. Let $\Sigma_1=C_1$ be the ribbon surface (topologically, a disjoint union of two disks) shown on the left of Figure \ref{20250409-1-2}, and let $L_1$ be its boundary. 
    
    Now we extend the link $L_1$ to the sphere of radius 1.5 via a trivial cobordism. If we add a (2-dimensional) 0-handle and a 1-handle as shown in the right side of Figure \ref{20250409-1-2} and extend the surgered link to the sphere of radius 2, we obtain the ribbon surface $\Sigma_2$, whose boundary we denote by $L_2$. 
        
        \begin{figure}
            \centering
            \begin{picture}(12.97cm,4cm)
                \put(0cm,0cm){\includegraphics[width=3.57cm, height=3.2cm]{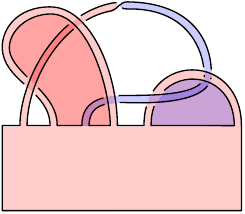}}
                \put(5cm,0cm){\includegraphics[width=7.97cm, height=4cm]{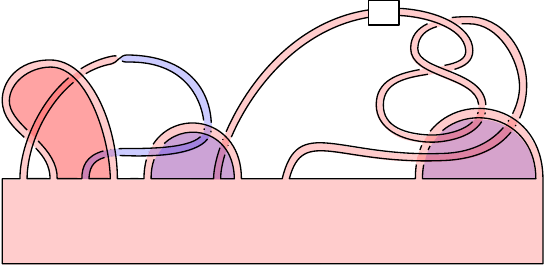}}
                                
                \put(10.4cm, 3.705cm){\scriptsize{$-2$}}
                
            \end{picture}        
            \caption{Left: The first stage $\Sigma_1$ of the exotic plane, which is a disjoint union of two disks. Right: The second stage $\Sigma_2$ of the exotic plane.}
            \label{20250409-1-2}
        \end{figure}

    Inductively, $\Sigma_i$ is constructed from $\Sigma_{i-1}$ by extending the link $L_{i-1}$ to the sphere of radius $i-1/2$, adding a 2-dimensional 0-handle and 1-handle using the same pattern, and extending the surgered link to $S_i$. Its boundary $L_i$ and the corresponding 2-dimensional cobordism $C_i$ are obtained accordingly. As an example, $C_3$ and $L_3$ are shown in Figure \ref{20250409-3}.

        \begin{figure}
            \centering
            \begin{picture}(12.36cm,4cm)
                \put(0cm,0cm){\includegraphics[width=12.36cm, height=4cm]{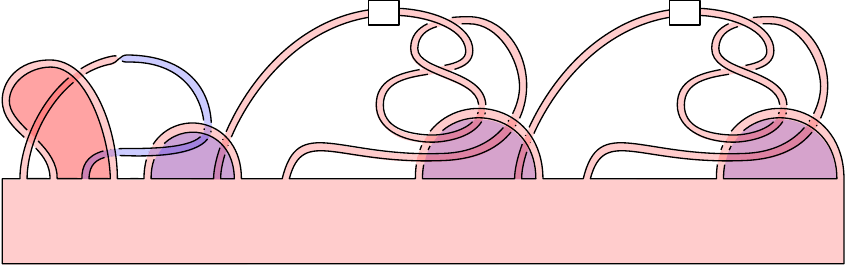}}
                                
                \put(5.4cm, 3.705cm){\scriptsize{$-2$}}
                \put(9.78cm, 3.705cm){\scriptsize{$-2$}}
                
            \end{picture}        
            \caption{The third stage $\Sigma_3$ of the exotic plane.}
            \label{20250409-3}
        \end{figure}

    In this sense, we obtain a nested sequence of compact surfaces $\Sigma_0\subset \Sigma_1\subset \Sigma_2\subset \cdots$ in $\mathbb{R}^4$. Let $\Sigma$ be the limiting surface $\bigcup_{i\in \mathbb{N}}\Sigma_i$. It is straightforward to see that the surface $\Sigma$ is homeomorphic to $\mathbb{R}^2$ and that the embedding $\Sigma\hookrightarrow\mathbb{R}^4$ is smooth and proper.

    In the rest of this paper, we will take advantage of this compact exhaustion and prove that the surface $\Sigma$ is an exotic plane and can be smoothly isotoped to a Lagrangian surface.

    \subsection{The plane is topologically standard}

        To prove that the plane $\Sigma$ is topologically standard, we apply the following theorem, which is known by experts (e.g., very briefly mentioned in Remark 5.3b in \cite{Gom25}) but, to the author's knowledge, has never been explicitly stated in the literature. Recall that for a manifold $X$ with a single end (which we denote by $\infty$), its fundamental group at infinity $\pi_1^\infty(X)$ is defined to be $\varprojlim \{\pi_1(X\backslash K) \,|\, K\subset X \text{ is compact}\}.$

        \begin{thrm}
            Let $\Sigma$ be a locally flat, properly embedded plane in $\mathbb{R}^4$. If $\pi_1(\mathbb{R}^4\backslash{\Sigma})\cong \mathbb{Z}$ and $\pi_1^\infty(\mathbb{R}^4\backslash{\Sigma})\cong \mathbb{Z}$, then $\Sigma$ is topologically isotopic to the standard plane in $\mathbb{R}^4$.
        \end{thrm}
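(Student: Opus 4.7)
The approach is to one-point compactify to $S^4 = \mathbb{R}^4 \cup \{\infty\}$, set $\widehat{\Sigma} := \Sigma \cup \{\infty\}$, and reduce the claim to Freedman's topological unknotting theorem for $2$-spheres in $S^4$. The plan breaks into three steps: (a) show $\widehat{\Sigma}$ is a locally flat $2$-sphere in $S^4$; (b) observe that $S^4 \setminus \widehat{\Sigma} = \mathbb{R}^4 \setminus \Sigma$ as sets, so $\pi_1$ of the complement is $\mathbb{Z}$ by hypothesis; (c) apply the Freedman--Quinn unknotting theorem (Theorem 11.7A of their book) to ambient isotope $\widehat{\Sigma}$ to the standard $S^2_{\mathrm{std}} \subset S^4$, then restrict to $\mathbb{R}^4$.

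The core of the proof is step (a). Take a sufficiently large round ball $B_R$; after a small perturbation, $\partial B_R$ is transverse to $\Sigma$, so $\Sigma \cap \partial B_R$ is a locally flat link in $\partial B_R \cong S^3$. Since $\Sigma$ is a proper plane with a single end, this link is a single knot $K_R$ for $R$ large. The hypothesis $\pi_1^\infty(\mathbb{R}^4 \setminus \Sigma) \cong \mathbb{Z}$, together with a van Kampen argument across $\partial B_R$, forces $\pi_1(S^3 \setminus K_R) \cong \mathbb{Z}$; the topological Dehn lemma then implies $K_R$ is the unknot in $S^3$. Combining this with the fact that $\Sigma \setminus \mathring{B}_R$ is abstractly a half-open annulus, one invokes topological uniqueness of standard collared ends in the pair $(\mathbb{R}^4 \setminus \mathring{B}_R,\,\Sigma \setminus \mathring{B}_R)$ to identify it with $(S^3, K_R) \times [R,\infty)$, where $K_R$ is unknotted. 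One-point compactifying at $\infty$ then provides a chart $(D^4, D^2)$ around $\infty \in \widehat{\Sigma}$, witnessing local flatness.

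With step (a) in hand, step (b) is immediate, and step (c) applies Freedman--Quinn directly, followed by composition with a self-homeomorphism of $S^4$ sending the image of $\infty$ to the standard point of $S^2_{\mathrm{std}}$ that arises as the point at infinity of $\mathbb{R}^2_{\mathrm{std}} \subset \mathbb{R}^4$ (using transitivity of $\mathrm{Homeo}(S^4)$ on pointed pairs of locally flat $2$-spheres). The resulting topological ambient isotopy of $S^4$ fixes $\infty$ setwise and restricts to the desired topological isotopy in $\mathbb{R}^4 = S^4 \setminus \{\infty\}$.

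The main obstacle is the last part of step (a): upgrading the end-theoretic identity $\pi_1^\infty = \mathbb{Z}$ and the unknottedness of $K_R$ to a concrete collar identification of $(\mathbb{R}^4 \setminus \mathring{B}_R,\,\Sigma \setminus \mathring{B}_R)$ with the standard pair. This is a controlled topological $h$-cobordism-style statement about the complementary end, and is where Freedman--Quinn's end and collaring theorems for topological $4$-manifolds must be invoked decisively; once this identification is in place, the remaining steps are essentially bookkeeping.
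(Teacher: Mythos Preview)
Your overall architecture matches the paper's: compactify to $(S^4,\widehat\Sigma)$, establish local flatness at $\infty$, invoke Freedman--Quinn Theorem~11.7A, and decompactify. The problem lies entirely in step~(a), where your argument has genuine gaps. First, the claim that $\Sigma\cap\partial B_R$ is a single knot for large $R$ is false in general: a properly embedded plane can meet arbitrarily large spheres in multi-component links, and the paper's own exotic plane does exactly this (each $L_i$ has at least two components, since $\chi(\Sigma_i)=2$ forces $\Sigma_i$ to be disconnected). Having one end constrains the unbounded components of $\Sigma\setminus\mathring B_R$, not the components of $\Sigma\cap\partial B_R$. Second, even granting a knot $K_R$, neither van Kampen across $\partial B_R$ nor the hypothesis $\pi_1^\infty\cong\mathbb{Z}$ forces $\pi_1(S^3\setminus K_R)\cong\mathbb{Z}$: the former constrains only a pushout, and the latter is an inverse-limit statement that does not control individual cross-sections. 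Third, your proposed conclusion---that the end of the pair is literally a product $(S^3,U)\times[R,\infty)$---is strictly stronger than local flatness at $\infty$, and would require exactly the relative collaring theorem you admit you do not have.

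The paper avoids all of this with a much lighter argument: since $\mathbb{Z}$ is abelian, the hypothesis $\pi_1^\infty(\mathbb{R}^4\setminus\Sigma)\cong\mathbb{Z}$ directly implies that $\widehat\Sigma$ is \emph{locally $1$-alg} at $\infty$, meaning that for every neighborhood $U$ of $\infty$ there is a smaller $V$ with the image of $\pi_1(V\setminus\widehat\Sigma)\to\pi_1(U\setminus\widehat\Sigma)$ abelian. Venema's theorem (the corollary after Theorem~1.3 in \cite{Ven97}; cf.\ Theorem~9.3A of \cite{FQ90}) then yields local flatness in one stroke. This is the correct tool here, and it turns step~(a) into a two-line citation rather than an end-theoretic project.
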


        \begin{proof}
            We take the one-point compactification of the pair $(\mathbb{R}^4,\Sigma)$. On the 4-manifold level, we obtain the standard 4-sphere $S^4=\mathbb{R}^4\cup \{p\}$, where $p$ denotes the point at infinity. On the surface level, we obtain the topological space $\Sigma':=\Sigma\cup\{p\}$, which is homeomorphic to the 2-sphere $S^2$ since $\Sigma$ is a proper embedding.
            
            Note that $\pi_1^\infty(\mathbb{R}^4\backslash {\Sigma})\cong \mathbb{Z}$ implies that the compactified surface $\Sigma'$ is locally $1$-alg at the point $p$ (i.e., for any neighborhood $U$ of $p$ in $S^4$, there exists a neighborhood $V$ of $p$ in $U$ such that the image of $\pi_1(V\backslash \Sigma')$ in $\pi_1(U\backslash \Sigma')$ is abelian). Then the corollary after Theorem 1.3 in \cite{Ven97} (also Theorem 9.3A of \cite{FQ90}, but its proof contains a gap) ensures that $\Sigma'$ is actually locally flat. 

            Now $\Sigma'$ is a locally flat embedding of $S^2$ in $S^4$ with fundamental group of the complement being infinite cyclic. By Theorem 11.7A of \cite{FQ90}, we know that $\Sigma'$ is topologically standard. Finally, by removing the point $p$ at infinity, we conclude that $\Sigma$ is also topologically standard.
        \end{proof}

        \begin{remark}
            In the statement of this theorem, when using the notation $\mathbb{R}^4\backslash \Sigma$, we genuinely mean the setwise difference between $\mathbb{R}^4$ and $\Sigma$ (instead of $\mathbb{R}^4\backslash \mathring{\nu}(\Sigma)$, which is more conventional in the compact setting). This distinction is important because $\mathbb{R}^4\backslash\Sigma$ and $\mathbb{R}^4\backslash \mathring{\nu}(\Sigma)$ have different compact exhaustions, which may lead to different fundamental groups at infinity.
        \end{remark}
        
        With this theorem, we can now prove that the plane $\Sigma$ is topologically standard.

        \subsubsection{$\pi_1(\mathbb{R}^4\backslash {\Sigma})\cong \mathbb{Z}$}{\label{pi-1}}

            This can be proved diagrammatically. As the surface $\Sigma$ is ribbon, there is a standard way to obtain a Kirby diagram for its complement (cf. Section 6.2 of \cite{GS99}). This yields the infinite Kirby diagram shown in Figure \ref{20250408-1}.

            \begin{figure}
                \centering
                \begin{picture}(17cm,4cm)
                    \put(0cm,0cm){\includegraphics[width=17cm, height=4cm]{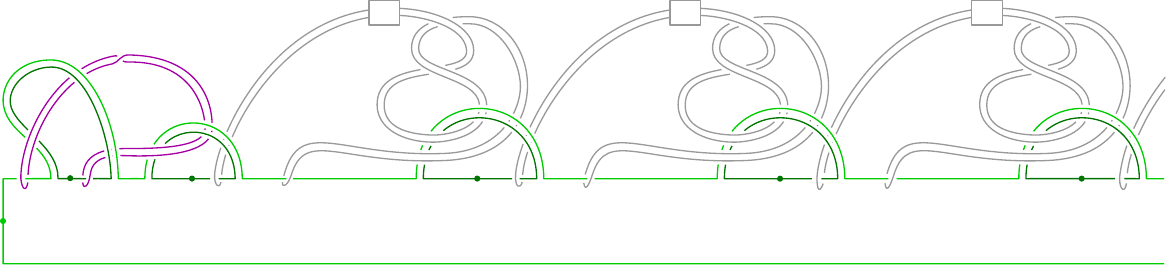}}
                                    
                    \put(5.4cm, 3.705cm){\scriptsize{\color{gray}{$-2$}}}
                    \put(9.78cm, 3.705cm){\scriptsize{\color{gray}{$-2$}}}
                    \put(14.18cm, 3.705cm){\scriptsize{\color{gray}{$-2$}}}

                    \put(4.8cm, 3.805cm){\scriptsize{\color{gray}{$0$}}}
                    \put(9.18cm, 3.805cm){\scriptsize{\color{gray}{$0$}}}
                    \put(13.58cm, 3.805cm){\scriptsize{\color{gray}{$0$}}}
                    \put(2.8cm, 3.005cm){\scriptsize{\color{purple}{$0$}}}
                    
                \end{picture}
                \caption{A Kirby diagram for the noncompact 4-manifold $\mathbb{R}^4\backslash {\Sigma}.$\vspace{-3pt}}
                \label{20250408-1}
            \end{figure}

            Note that this is a noncompact handle diagram, meaning that all handles are attached to the 3-space $\mathbb{R}^3$ bounding the half 4-space $\mathbb{H}^4$ (instead of a compact 4-dimensional 0-handle). Also, we remove the boundary after all handles have been attached. 
            
            It is also worth noting that the light green dotted curve represents a ``noncompact 1-handle", as shown in the picture on the left side of Figure \ref{20250830-1}. This means we carve out a tubular neighborhood of the half-plane it bounds after pushing it into the interior of $\mathbb{H}^4$. The resulting manifold is $\mathbb{R}^4\backslash\mathbb{R}^2$. Another way of drawing $\mathbb{R}^4\backslash\mathbb{R}^2$ is by drawing infinitely many alternating compact 1-handles and 0-framed 2-handles, as illustrated in the diagram on the right side of Figure \ref{20250830-1}. The two diagrams can be identified by repeatedly sliding the leftmost compact 1-handle over the 1-handle to the right and canceling excessive 1/2-handle pairs. In this paper, we'll use the first notation primarily as a shorthand for the second.

            \begin{figure}
                \centering
                \begin{picture}(16.5cm,1.5cm)      

                    \put(0cm,-0.1cm){\includegraphics[height=1.5cm]{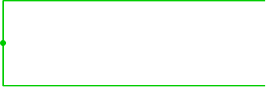}}
                    \put(6cm,-0.1cm){\includegraphics[height=1.5cm]{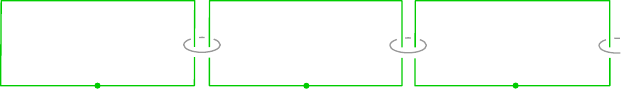}}
                    \put(5.1cm, 0.6cm){$\cong$}

                    \put(8.8cm, 0.5cm){\scriptsize{\color{gray}{$0$}}}
                    \put(12.3cm, 0.5cm){\scriptsize{\color{gray}{$0$}}}
                    \put(15.8cm, 0.5cm){\scriptsize{\color{gray}{$0$}}}
                    \put(16.6cm, 0.57cm){\scriptsize{\color{gray}{$\cdots$}}}

                \end{picture}        
                \caption{Two ways of drawing a noncompact 1-handle.\vspace{-3pt}}
                \label{20250830-1}
            \end{figure}

            Now we return to the Kirby diagram of $\mathbb{R}^4\backslash \Sigma$ shown in Figure \ref{20250408-1}. As the fundamental group of a 4-manifold depends only on the homotopy classes of the attaching circles of its 2-handles, we are free to modify our 4-manifold (without changing the fundamental group) by undoing the clasps and twist boxes on the gray 2-handles, obtaining the Kirby diagram in Figure \ref{20250408-2}. After isotopy, we obtain the Kirby diagram in Figure \ref{20250408-3}. 

            \begin{figure}
                \centering
                \begin{picture}(17cm,3.85cm)
                    \put(0cm,0cm){\includegraphics[width=17cm, height=3.85cm]{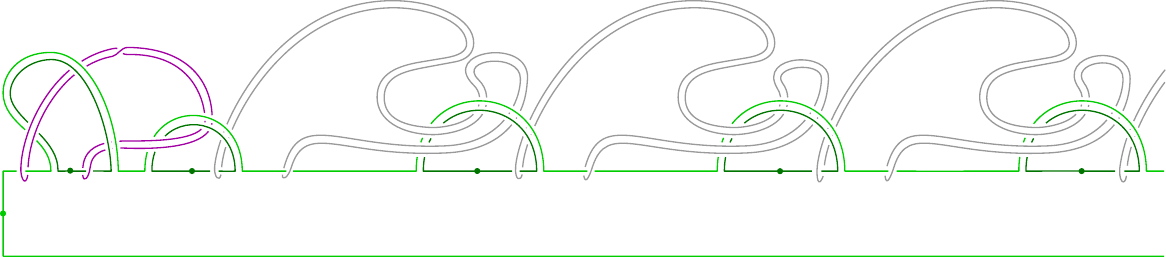}}

                    \put(4.8cm, 3.805cm){\scriptsize{\color{gray}{$0$}}}
                    \put(9.18cm, 3.805cm){\scriptsize{\color{gray}{$0$}}}
                    \put(13.58cm, 3.805cm){\scriptsize{\color{gray}{$0$}}}
                    \put(2.8cm, 3.005cm){\scriptsize{\color{purple}{$0$}}}
                    
                \end{picture}
                \caption{A 4-manifold that is homotopy equivalent to $\mathbb{R}^4\backslash\Sigma$.\vspace{-9pt}}
                \label{20250408-2}
            \end{figure}

            \begin{figure}
                \centering
                \begin{picture}(17cm,3.2cm)
                    \put(0cm,0cm){\includegraphics[width=17cm, height=3.2cm]{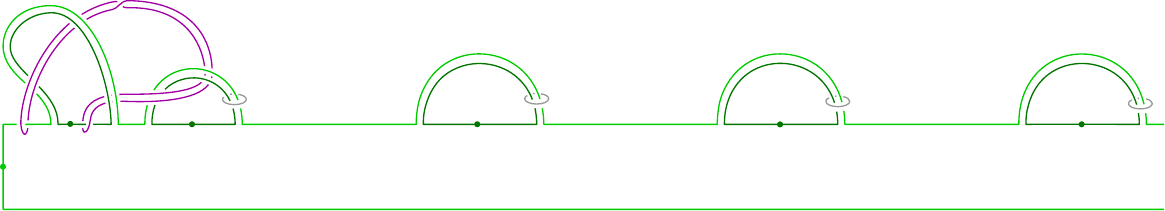}}

                    \put(3.7cm, 1.805cm){\scriptsize{\color{gray}{$0$}}}
                    \put(8.08cm, 1.805cm){\scriptsize{\color{gray}{$0$}}}
                    \put(12.48cm, 1.805cm){\scriptsize{\color{gray}{$0$}}}
                    \put(16.88cm, 1.805cm){\scriptsize{\color{gray}{$0$}}}
                    \put(2.8cm, 3.005cm){\scriptsize{\color{purple}{$0$}}}
                    
                \end{picture}
                \caption{}
                \label{20250408-3}
            \end{figure}

            Now each gray 2-handle cancels a dark green 1-handle (after sliding the bright green 1-handle over the dark green 1-handles), which yields the leftmost Kirby diagram in Figure \ref{20250408-5-6-7}. An isotopy gives the middle Kirby diagram. After a final cancellation of the purple/dark-green 1/2-handle pair, we obtain the rightmost Kirby diagram of Figure \ref{20250408-5-6-7}. This represents the complement of the standard $\mathbb{R}^2$ in $\mathbb{R}^4$, which has fundamental group $\mathbb{Z}$.

            \begin{figure}[H]
                \centering
                \begin{picture}(16cm,3.2cm)      

                    \put(0cm,0cm){\includegraphics[height=3.2cm]{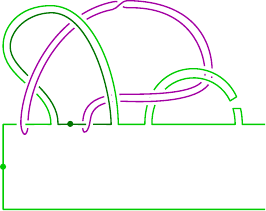}}
                    \put(6cm,0cm){\includegraphics[height=3.1cm]{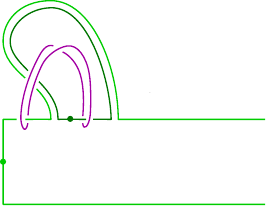}}
                    \put(12cm,0cm){\includegraphics[height=1.7cm]{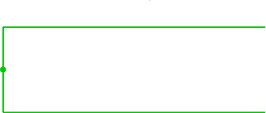}}

                    \put(4.8cm,1.2cm){$\cong$}
                    \put(10.8cm,1.2cm){$\cong$}

                    \put(2.9cm, 3.005cm){\scriptsize{\color{purple}{$0$}}}
                    \put(6.8cm, 2.515cm){\scriptsize{\color{purple}{$0$}}}

                \end{picture}        
                \caption{}
                \label{20250408-5-6-7}
            \end{figure}

        \subsubsection{$\pi_1^\infty(\mathbb{R}^4\backslash \Sigma)\cong \mathbb{Z}$}

            Again we recall that for a manifold $X$ with a single end (denoted by $\infty$), its fundamental group at infinity $\pi_1^\infty(X)$ is defined to be $\varprojlim \{\pi_1(X\backslash K) \,|\, K\subset X \text{ is compact}\}.$

            Next we describe a compact exhaustion for the open manifold $\mathbb{R}^4\backslash \Sigma.$ We would like to emphasize again that the notation $\mathbb{R}^4\backslash \Sigma$ here stands for setwise difference instead of $\mathbb{R}^4\backslash \mathring{\nu}(\Sigma)$. Let $\{\epsilon_i\}$ be a small enough decreasing sequence that converges to $0$, and let $\nu_{\epsilon_i}(\Sigma_i)$ be the diameter $\epsilon_i$ (trivial) tubular neighborhood of $\Sigma_i$ in $B_i$. Now $B_0\backslash \mathring{\nu}_{\epsilon_0}(\Sigma_0) \subset B_1\backslash \mathring{\nu}_{\epsilon_1}(\Sigma_1) \subset B_2\backslash \mathring{\nu}_{\epsilon_2}(\Sigma_2) \subset \cdots$ gives an actual compact exhaustion for the open manifold $\mathbb{R}^4\backslash\Sigma$. A schematic can be seen in Figure \ref{compact-exhaustion}.

            \begin{figure}[H]
                \centering
                \begin{picture}(7cm,4.0cm)
                    \put(0cm,0.0cm){\includegraphics[height=4.0cm]{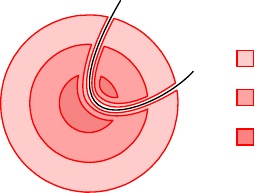}}

                    \put(4.15cm,2.4cm){$\Sigma$}
                    \put(5.5cm,1.03cm){\textcolor{red}{$B_1\backslash\mathring{\nu}_{\epsilon_1}(\Sigma_1)$}}
                    \put(5.5cm,1.83cm){\textcolor{red}{$B_2\backslash\mathring{\nu}_{\epsilon_2}(\Sigma_2)$}}
                    \put(5.5cm,2.63cm){\textcolor{red}{$B_3\backslash\mathring{\nu}_{\epsilon_3}(\Sigma_3)$}}
                \end{picture}        
                \caption{A schematic for a compact exhaustion of the manifold $\mathbb{R}^4\backslash \Sigma$.}
                \label{compact-exhaustion}
            \end{figure} 

            \noindent This compact exhaustion induces an inverse system $$\pi_1((\mathbb{R}^4\backslash\Sigma) - (B_0\backslash \mathring{\nu}_{\epsilon_0}(\Sigma_0))) \xleftarrow{i_\ast} \pi_1((\mathbb{R}^4\backslash\Sigma) - (B_1\backslash \mathring{\nu}_{\epsilon_1}(\Sigma_1))) \xleftarrow{i_\ast} \pi_1((\mathbb{R}^4\backslash\Sigma) - (B_2\backslash \mathring{\nu}_{\epsilon_2}(\Sigma_2))) \xleftarrow{i_\ast} \cdots$$

            \noindent By the uniqueness of inverse limits, we know that the inverse limit of this system is isomorphic to the fundamental group at infinity $\pi_1^\infty(\mathbb{R}^4\backslash\Sigma)$.

            We claim that for any positive integer $i$, the fundamental group $\pi_1((\mathbb{R}^4\backslash\Sigma) - (B_i\backslash \mathring{\nu}_{\epsilon_i}(\Sigma_i)))$ is isomorphic to $\mathbb{Z}$, generated by the meridian of the surface $\Sigma\backslash\Sigma_i$. If this claim is indeed true, the inverse system above is just the system $\mathbb{Z}\xleftarrow{\cong}\mathbb{Z} \xleftarrow{\cong}\cdots$, whose inverse limit is $\mathbb{Z}$. This shows that $\pi_1^\infty(\mathbb{R}^4\backslash \Sigma)\cong \mathbb{Z}$.

            It remains to show that $\pi_1((\mathbb{R}^4\backslash\Sigma) - (B_i\backslash \mathring{\nu}_{\epsilon_i}(\Sigma_i))) \cong \mathbb{Z}$ for all $i>0.$ As a first step, we need a Kirby diagram description for this 4-manifold. Note that the boundary of $(\mathbb{R}^4\backslash\Sigma) - (B_i\backslash \mathring{\nu}_{\epsilon_i}(\Sigma_i))$ is diffeomorphic to the boundary of $B_i\backslash \mathring{\nu}_{\epsilon_i}(\Sigma_i)$, which we denote by $\partial_-$. Then it is straightforward to see that the 4-manifold $(\mathbb{R}^4\backslash\Sigma) - (B_i\backslash \mathring{\nu}_{\epsilon_i}(\Sigma_i))$ can be built from (a thickening of) the boundary $\partial_-$ by stacking the infinite sequence of cobordisms $\bigcup_{k>i}W_k\backslash \mathring{\nu}_{\epsilon_k}(C_k)$.\footnote{Technically, we should be stacking the cobordisms $(B_k\backslash \mathring{\nu}_{\epsilon_k}(\Sigma_k))-(\mathring{B}_{k-1}\backslash \mathring{\nu}_{\epsilon_{k-1}}(\Sigma_{k-1}))$, which differ from $W_k\backslash \mathring{\nu}_{\epsilon_k}(C_k)$ by a thin piece parallel to $\Sigma \cap B_k$. However, this difference becomes irrelevant (up to diffeomorphism) when these cobordisms are stacked and glued to the thickening of $\partial_-$.}

            Thus, to draw its Kirby diagram, we just modify the Kirby diagram of $\mathbb{R}^4\backslash\Sigma$ (Figure \ref{20250408-1}) by replacing the first $i$ stages with its boundary surgery diagram (by replacing each 1-handle and 0-framed 2-handle curve with $\langle 0\rangle$-framed surgery curves). Then the surgery diagram part illustrates the frontier $\partial_-$, while the handle diagram part illustrates the infinite cobordism on top of $\partial_-$. A typical example $(\mathbb{R}^4\backslash\Sigma) - (B_3\backslash \mathring{\nu}(\Sigma_3))$ is shown in Figure \ref{20250503-1}.

            \begin{figure}[H]
                \centering
                \begin{picture}(17cm,4cm)
                    \put(0cm,0cm){\includegraphics[width=17cm, height=4cm]{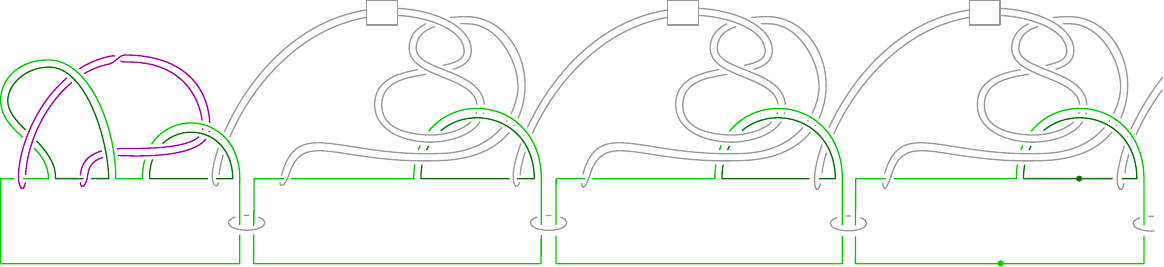}}
                                    
                    \put(5.4cm, 3.705cm){\scriptsize{\color{gray}{$-2$}}}
                    \put(9.78cm, 3.705cm){\scriptsize{\color{gray}{$-2$}}}
                    \put(14.18cm, 3.705cm){\scriptsize{\color{gray}{$-2$}}}

                    \put(4.4cm, 3.805cm){\scriptsize{\color{gray}{$\langle 0\rangle$}}}
                    \put(8.78cm, 3.805cm){\scriptsize{\color{gray}{$\langle 0\rangle$}}}
                    \put(13.18cm, 3.805cm){\scriptsize{\color{gray}{$0$}}}
                    
                    \put(2.8cm, 3.005cm){\scriptsize{\color{purple}{$\langle 0\rangle$}}}

                    \put(0.05cm, 0.605cm){\scriptsize{\color{green}{$\langle 0\rangle$}}}
                    \put(5.55cm, 0.155cm){\scriptsize{\color{green}{$\langle 0\rangle$}}}
                    \put(10.05cm, 0.155cm){\scriptsize{\color{green}{$\langle 0\rangle$}}}

                    \put(3.95cm, 0.605cm){\scriptsize{\color{gray}{$\langle 0\rangle$}}}
                    \put(8.35cm, 0.605cm){\scriptsize{\color{gray}{$\langle 0\rangle$}}}
                    \put(12.75cm, 0.605cm){\scriptsize{\color{gray}{$0$}}}
                    \put(16.35cm, 0.605cm){\scriptsize{\color{gray}{$0$}}}

                    \put(0.75cm, 1.045cm){\scriptsize{\color{forest}{$\langle 0\rangle$}}}
                    \put(2.55cm, 1.045cm){\scriptsize{\color{forest}{$\langle 0\rangle$}}}
                    \put(6.75cm, 1.045cm){\scriptsize{\color{forest}{$\langle 0\rangle$}}}
                    \put(11.15cm, 1.045cm){\scriptsize{\color{forest}{$\langle 0\rangle$}}}

                \end{picture}
                \caption{A relative Kirby diagram for the manifold $(\mathbb{R}^4\backslash\Sigma) - (B_3\backslash \mathring{\nu}(\Sigma_3))$.}
                \label{20250503-1}
            \end{figure}

            Now that we have a Kirby diagram for $(\mathbb{R}^4\backslash\Sigma) - (B_i\backslash \mathring{\nu}_{\epsilon_i}(\Sigma_i))$, we can start computing its fundamental group. As in subsection \ref{pi-1}, since the fundamental group depends only on the homotopy classes of the attaching circles of the 2-handles, we can undo the clasps and twist boxes on the gray 2-handles. Like before, we obtain the Kirby diagram in Figure \ref{20250503-2}. A 2-handle slide as indicated by the dotted arrow on the righthand side yields the diagram in Figure \ref{20250503-3}.

            \begin{figure}[H]
                \centering
                \begin{picture}(16.8cm,4cm)
                    \put(0cm,0cm){\includegraphics[width=16.8cm, height=4cm]{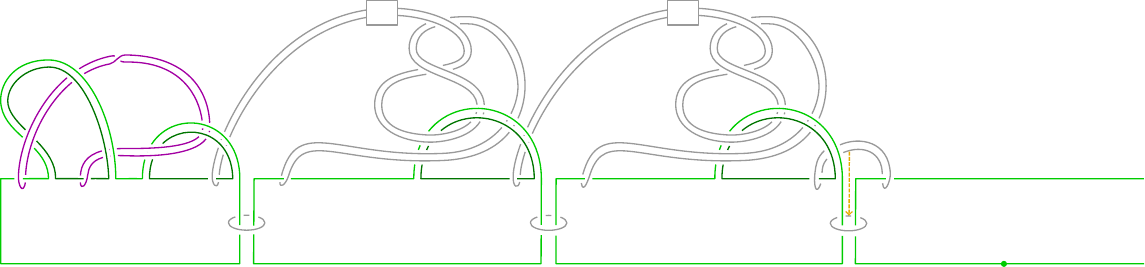}}
                                    
                    \put(5.4cm, 3.705cm){\scriptsize{\color{gray}{$-2$}}}
                    \put(9.81cm, 3.705cm){\scriptsize{\color{gray}{$-2$}}}

                    \put(4.4cm, 3.805cm){\scriptsize{\color{gray}{$\langle 0\rangle$}}}
                    \put(8.78cm, 3.805cm){\scriptsize{\color{gray}{$\langle 0\rangle$}}}
                    \put(13.08cm, 1.805cm){\scriptsize{\color{gray}{$0$}}}
                    
                    \put(2.8cm, 3.005cm){\scriptsize{\color{purple}{$\langle 0\rangle$}}}

                    \put(0.05cm, 0.605cm){\scriptsize{\color{green}{$\langle 0\rangle$}}}
                    \put(5.55cm, 0.155cm){\scriptsize{\color{green}{$\langle 0\rangle$}}}
                    \put(10.05cm, 0.155cm){\scriptsize{\color{green}{$\langle 0\rangle$}}}

                    \put(3.95cm, 0.605cm){\scriptsize{\color{gray}{$\langle 0\rangle$}}}
                    \put(8.40cm, 0.605cm){\scriptsize{\color{gray}{$\langle 0\rangle$}}}
                    \put(12.85cm, 0.605cm){\scriptsize{\color{gray}{$0$}}}

                    \put(0.75cm, 1.045cm){\scriptsize{\color{forest}{$\langle 0\rangle$}}}
                    \put(2.55cm, 1.045cm){\scriptsize{\color{forest}{$\langle 0\rangle$}}}
                    \put(6.75cm, 1.045cm){\scriptsize{\color{forest}{$\langle 0\rangle$}}}
                    \put(11.15cm, 1.045cm){\scriptsize{\color{forest}{$\langle 0\rangle$}}}

                \end{picture}
                \caption{}
                \label{20250503-2}
            \end{figure}

            \begin{figure}[H]
                \centering
                \begin{picture}(16.8cm,4cm)
                    \put(0cm,0cm){\includegraphics[width=16.8cm, height=4cm]{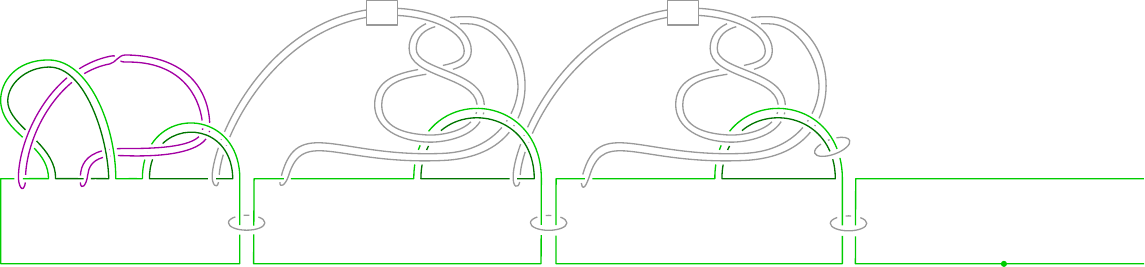}}
                                    
                    \put(5.4cm, 3.705cm){\scriptsize{\color{gray}{$-2$}}}
                    \put(9.81cm, 3.705cm){\scriptsize{\color{gray}{$-2$}}}

                    \put(4.4cm, 3.805cm){\scriptsize{\color{gray}{$\langle 0\rangle$}}}
                    \put(8.78cm, 3.805cm){\scriptsize{\color{gray}{$\langle 0\rangle$}}}
                    \put(12.58cm, 1.805cm){\scriptsize{\color{gray}{$0$}}}
                    
                    \put(2.8cm, 3.005cm){\scriptsize{\color{purple}{$\langle 0\rangle$}}}

                    \put(0.05cm, 0.605cm){\scriptsize{\color{green}{$\langle 0\rangle$}}}
                    \put(5.55cm, 0.155cm){\scriptsize{\color{green}{$\langle 0\rangle$}}}
                    \put(10.05cm, 0.155cm){\scriptsize{\color{green}{$\langle 0\rangle$}}}

                    \put(3.95cm, 0.605cm){\scriptsize{\color{gray}{$\langle 0\rangle$}}}
                    \put(8.40cm, 0.605cm){\scriptsize{\color{gray}{$\langle 0\rangle$}}}
                    \put(12.85cm, 0.605cm){\scriptsize{\color{gray}{$0$}}}

                    \put(0.75cm, 1.045cm){\scriptsize{\color{forest}{$\langle 0\rangle$}}}
                    \put(2.55cm, 1.045cm){\scriptsize{\color{forest}{$\langle 0\rangle$}}}
                    \put(6.75cm, 1.045cm){\scriptsize{\color{forest}{$\langle 0\rangle$}}}
                    \put(11.15cm, 1.045cm){\scriptsize{\color{forest}{$\langle 0\rangle$}}}

                \end{picture}
                \caption{}
                \label{20250503-3}
            \end{figure}

            Next we slide the bright green surgery curves over the corresponding dark green curves (which are slides within the 3-dimensional surgery diagrams of $\partial_-$). This gives the diagram in Figure \ref{20250503-4}.

            \begin{figure}[H]
                \centering
                \begin{picture}(16.8cm,4cm)
                    \put(0cm,0cm){\includegraphics[width=16.8cm, height=4cm]{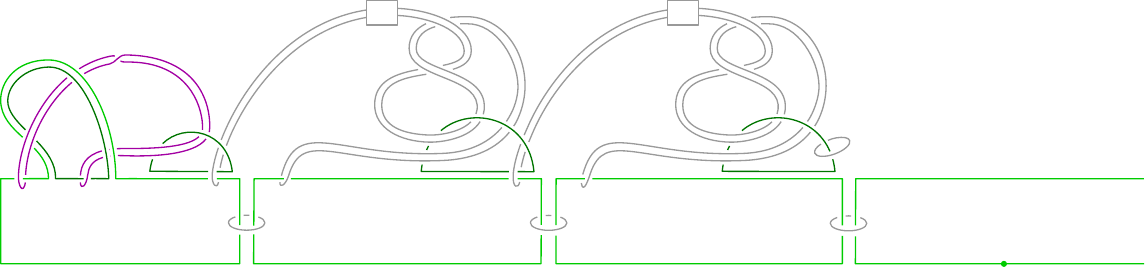}}
                                    
                    \put(5.4cm, 3.705cm){\scriptsize{\color{gray}{$-2$}}}
                    \put(9.81cm, 3.705cm){\scriptsize{\color{gray}{$-2$}}}

                    \put(4.4cm, 3.805cm){\scriptsize{\color{gray}{$\langle 0\rangle$}}}
                    \put(8.78cm, 3.805cm){\scriptsize{\color{gray}{$\langle 0\rangle$}}}
                    \put(12.58cm, 1.805cm){\scriptsize{\color{gray}{$0$}}}
                    
                    \put(2.8cm, 3.005cm){\scriptsize{\color{purple}{$\langle 0\rangle$}}}

                    \put(0.05cm, 0.605cm){\scriptsize{\color{green}{$\langle 0\rangle$}}}
                    \put(5.55cm, 0.155cm){\scriptsize{\color{green}{$\langle 0\rangle$}}}
                    \put(10.05cm, 0.155cm){\scriptsize{\color{green}{$\langle 0\rangle$}}}

                    \put(3.95cm, 0.605cm){\scriptsize{\color{gray}{$\langle 0\rangle$}}}
                    \put(8.40cm, 0.605cm){\scriptsize{\color{gray}{$\langle 0\rangle$}}}
                    \put(12.85cm, 0.605cm){\scriptsize{\color{gray}{$0$}}}

                    \put(0.75cm, 1.045cm){\scriptsize{\color{forest}{$\langle 0\rangle$}}}
                    \put(2.55cm, 1.045cm){\scriptsize{\color{forest}{$\langle 0\rangle$}}}
                    \put(6.75cm, 1.045cm){\scriptsize{\color{forest}{$\langle 0\rangle$}}}
                    \put(11.15cm, 1.045cm){\scriptsize{\color{forest}{$\langle 0\rangle$}}}

                \end{picture}
                \caption{}
                \label{20250503-4}
            \end{figure}

            Next we observe the Kirby diagram shown in Figure \ref{20250503-5}, where we remove the $\langle 0\rangle$-framed surgery curve together with its meridional 0-framed 2-handle. This creates a new 4-manifold $X_i$ whose fundamental group surjects onto $\pi_1((\mathbb{R}^4\backslash\Sigma) - (B_i\backslash \mathring{\nu}_{\epsilon_i}(\Sigma_i)))$. We will show that $\pi_1(X_i)\cong \mathbb{Z}$.

            \begin{figure}[H]
                \centering
                \begin{picture}(16.8cm,4cm)
                    \put(0cm,0cm){\includegraphics[width=16.8cm, height=4cm]{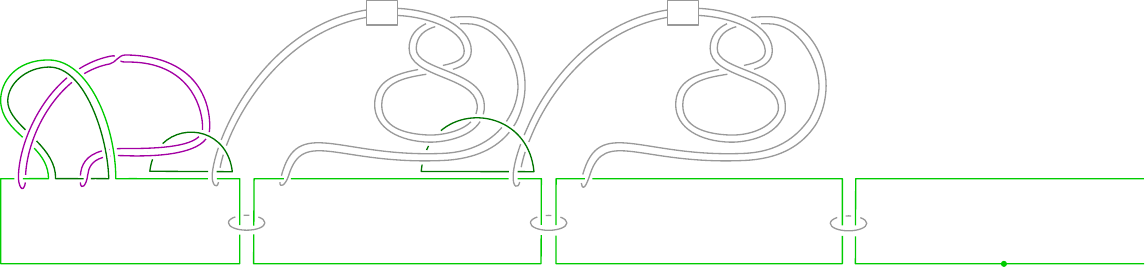}}
                                    
                    \put(5.4cm, 3.705cm){\scriptsize{\color{gray}{$-2$}}}
                    \put(9.81cm, 3.705cm){\scriptsize{\color{gray}{$-2$}}}

                    \put(4.4cm, 3.805cm){\scriptsize{\color{gray}{$\langle 0\rangle$}}}
                    \put(8.78cm, 3.805cm){\scriptsize{\color{gray}{$\langle 0\rangle$}}}
                    
                    \put(2.8cm, 3.005cm){\scriptsize{\color{purple}{$\langle 0\rangle$}}}

                    \put(0.05cm, 0.605cm){\scriptsize{\color{green}{$\langle 0\rangle$}}}
                    \put(5.55cm, 0.155cm){\scriptsize{\color{green}{$\langle 0\rangle$}}}
                    \put(10.05cm, 0.155cm){\scriptsize{\color{green}{$\langle 0\rangle$}}}

                    \put(3.95cm, 0.605cm){\scriptsize{\color{gray}{$\langle 0\rangle$}}}
                    \put(8.40cm, 0.605cm){\scriptsize{\color{gray}{$\langle 0\rangle$}}}
                    \put(12.85cm, 0.605cm){\scriptsize{\color{gray}{$0$}}}

                    \put(0.75cm, 1.045cm){\scriptsize{\color{forest}{$\langle 0\rangle$}}}
                    \put(2.55cm, 1.045cm){\scriptsize{\color{forest}{$\langle 0\rangle$}}}
                    \put(6.75cm, 1.045cm){\scriptsize{\color{forest}{$\langle 0\rangle$}}}

                \end{picture}
                \caption{The modified manifold $X_i$ whose fundamental group surjects onto $\pi_1((\mathbb{R}^4\backslash\Sigma) - (B_i\backslash \mathring{\nu}_{\epsilon_i}(\Sigma_i)))$.}
                \label{20250503-5}
            \end{figure}

            To justify the surjection, we consider a local model, illustrated in Figure \ref{local-model}. The dark green surgery curve creates a normal generator for the fundamental group as well as a possibly nontrivial relator. The 4-dimensional 2-handle on the right cancels the normal generator. This means that $\pi_1((\mathbb{R}^4\backslash\Sigma) - (B_i\backslash \mathring{\nu}_{\epsilon_i}(\Sigma_i)))$ is just $\pi_1(X_i)$ with one more possibly nontrivial relator, resulting in the surjective quotient map. 

            \begin{figure}[H]
                \centering
                \begin{picture}(3.35cm,3.0cm)
                    \put(0cm,0.0cm){\includegraphics[height=3.0cm]{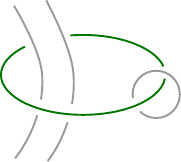}}

                    \put(0.7cm,2.1cm){\textcolor{gray}{$\cdots$}}
                    \put(3.3cm,0.6cm){\textcolor{gray}{$0$}}
                    \put(-0.1cm,0.75cm){\textcolor{forest}{$\langle 0\rangle$}}
                \end{picture}        
                \caption{A local model that illustrates the modification to $X_i$.\vspace{-5pt}}
                \label{local-model}
            \end{figure} 

            Next we further modify the manifold $X_i$ by removing the rightmost gray 0-framed 2-handle and the noncompact 1-handle curve, as shown in Figure \ref{20250909-1}. This modification yields a new 4-manifold $X'_i$ with the same fundamental group as $X_i$. Indeed, the fundamental group generator corresponding to the noncompact 1-handle (represented by its meridian) is identified with the generator corresponding to the bright green surgery curve on its left (due to the gray 2-handle).

            \begin{figure}[H]
                \centering
                \begin{picture}(12.35cm,4cm)
                    \put(0cm,0cm){\includegraphics[width=12.35cm, height=4cm]{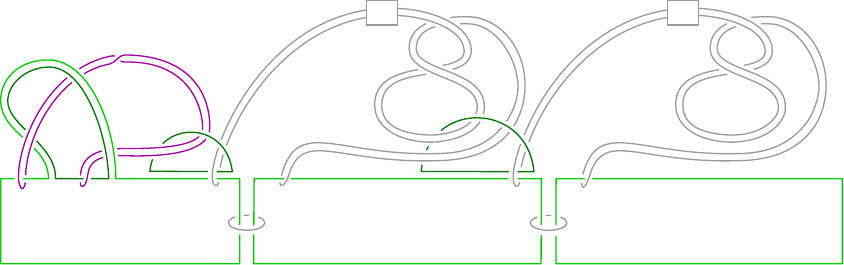}}
                                    
                    \put(5.4cm, 3.705cm){\scriptsize{\color{gray}{$-2$}}}
                    \put(9.81cm, 3.705cm){\scriptsize{\color{gray}{$-2$}}}

                    \put(4.4cm, 3.805cm){\scriptsize{\color{gray}{$\langle 0\rangle$}}}
                    \put(8.78cm, 3.805cm){\scriptsize{\color{gray}{$\langle 0\rangle$}}}
                    
                    \put(2.8cm, 3.005cm){\scriptsize{\color{purple}{$\langle 0\rangle$}}}

                    \put(0.05cm, 0.605cm){\scriptsize{\color{green}{$\langle 0\rangle$}}}
                    \put(5.55cm, 0.155cm){\scriptsize{\color{green}{$\langle 0\rangle$}}}
                    \put(10.05cm, 0.155cm){\scriptsize{\color{green}{$\langle 0\rangle$}}}

                    \put(3.95cm, 0.605cm){\scriptsize{\color{gray}{$\langle 0\rangle$}}}
                    \put(8.40cm, 0.605cm){\scriptsize{\color{gray}{$\langle 0\rangle$}}}

                    \put(0.75cm, 1.045cm){\scriptsize{\color{forest}{$\langle 0\rangle$}}}
                    \put(2.55cm, 1.045cm){\scriptsize{\color{forest}{$\langle 0\rangle$}}}
                    \put(6.75cm, 1.045cm){\scriptsize{\color{forest}{$\langle 0\rangle$}}}

                \end{picture}
                \caption{The modified (compact) manifold $X'_i$ whose fundamental group is isomorphic to $\pi_1(X_i)$. \vspace{-7pt}}
                \label{20250909-1}
            \end{figure}

            If we consecutively slide the leftmost bright green curve over the next one on the right, we obtain canceling pairs of bright green/gray curves. Canceling them via slam dunks results in Figure \ref{20250806-1}.

            \begin{figure}[H]
                \centering
                \begin{picture}(12.35cm,3.9cm)
                    \put(0cm,0cm){\includegraphics[width=12.35cm, height=4cm]{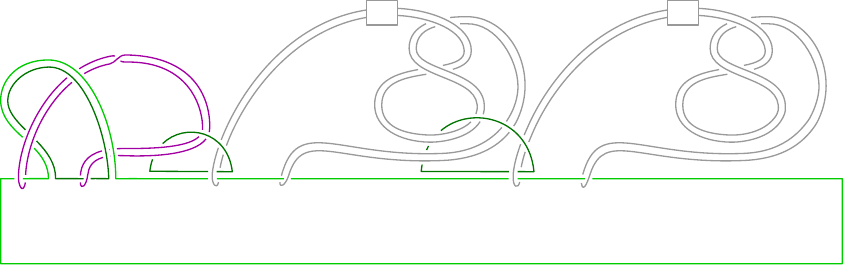}}
                                    
                    \put(5.4cm, 3.705cm){\scriptsize{\color{gray}{$-2$}}}
                    \put(9.81cm, 3.705cm){\scriptsize{\color{gray}{$-2$}}}

                    \put(4.4cm, 3.805cm){\scriptsize{\color{gray}{$\langle 0\rangle$}}}
                    \put(8.78cm, 3.805cm){\scriptsize{\color{gray}{$\langle 0\rangle$}}}
                    
                    \put(2.8cm, 3.005cm){\scriptsize{\color{purple}{$\langle 0\rangle$}}}

                    \put(0.05cm, 0.605cm){\scriptsize{\color{green}{$\langle 0\rangle$}}}

                    \put(0.75cm, 1.045cm){\scriptsize{\color{forest}{$\langle 0\rangle$}}}
                    \put(2.55cm, 1.045cm){\scriptsize{\color{forest}{$\langle 0\rangle$}}}
                    \put(6.75cm, 1.045cm){\scriptsize{\color{forest}{$\langle 0\rangle$}}}

                \end{picture}
                \caption{\vspace{-7pt}}
                \label{20250806-1}
            \end{figure}

            Now we can inductively isotope the rightmost gray surgery curve (as shown in Figure \ref{20250503-6}) and cancel the rightmost gray/dark-green surgery curves via slam-dunk (as shown in the picture on the left side of Figure \ref{20250503-7}) until we reach the diagram described in the middle of Figure \ref{20250503-7}. 

            Finally we can cancel the purple and the dark green surgery curves (after a handle slide) as before. This gives the diagram on the right side of Figure \ref{20250503-7}, which represents (a thickening of) $S^1\times S^2$. Its fundamental group is infinite cyclic, generated by the meridian of the surgery curve. 

            \begin{figure}
                \centering
                \begin{picture}(12.35cm,4cm)
                    \put(0cm,0cm){\includegraphics[width=12.35cm, height=4cm]{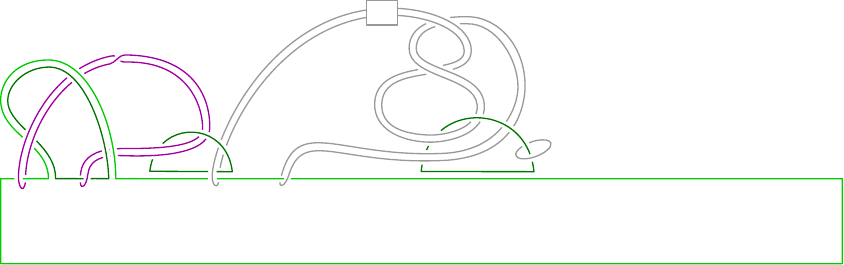}}
                                    
                    \put(5.4cm, 3.705cm){\scriptsize{\color{gray}{$-2$}}}

                    \put(4.4cm, 3.805cm){\scriptsize{\color{gray}{$\langle 0\rangle$}}}
                    \put(8.18cm, 1.805cm){\scriptsize{\color{gray}{$\langle 0\rangle$}}}
                    
                    \put(2.8cm, 3.005cm){\scriptsize{\color{purple}{$\langle 0\rangle$}}}

                    \put(0.05cm, 0.605cm){\scriptsize{\color{green}{$\langle 0\rangle$}}}

                    \put(0.75cm, 1.045cm){\scriptsize{\color{forest}{$\langle 0\rangle$}}}
                    \put(2.55cm, 1.045cm){\scriptsize{\color{forest}{$\langle 0\rangle$}}}
                    \put(6.75cm, 1.045cm){\scriptsize{\color{forest}{$\langle 0\rangle$}}}

                \end{picture}
                \caption{}
                \label{20250503-6}
            \end{figure}

            \begin{figure}
                \centering
                \begin{picture}(13.05cm,3.2cm)
                    \put(0cm,0cm){\includegraphics[width=3.65cm, height=3.2cm]{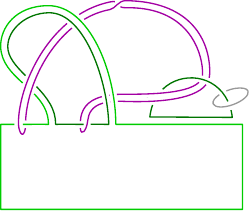}}

                    \put(3.7cm, 1.805cm){\scriptsize{\color{gray}{$\langle 0\rangle$}}}
                    
                    \put(2.8cm, 3.005cm){\scriptsize{\color{purple}{$\langle 0\rangle$}}}

                    \put(0.05cm, 0.605cm){\scriptsize{\color{green}{$\langle 0\rangle$}}}

                    \put(0.75cm, 1.045cm){\scriptsize{\color{forest}{$\langle 0\rangle$}}}
                    \put(2.55cm, 1.045cm){\scriptsize{\color{forest}{$\langle 0\rangle$}}}

                    \put(5.0cm,0cm){\includegraphics[width=3.3cm, height=3.2cm]{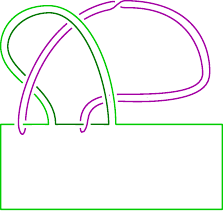}}
                    
                    \put(7.8cm, 3.005cm){\scriptsize{\color{purple}{$\langle 0\rangle$}}}

                    \put(5.05cm, 0.605cm){\scriptsize{\color{green}{$\langle 0\rangle$}}}

                    \put(5.75cm, 1.045cm){\scriptsize{\color{forest}{$\langle 0\rangle$}}}

                    \put(4.1cm, 0.5cm){$\cong$}
                    \put(8.8cm, 0.5cm){$\cong$}

                    \put(9.7cm,0cm){\includegraphics[width=3.3cm, height=1.7cm]{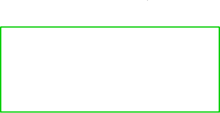}}

                    \put(9.75cm, 0.605cm){\scriptsize{\color{green}{$\langle 0\rangle$}}}
                    
                \end{picture}
                \caption{\vspace{-10pt}}
                \label{20250503-7}
            \end{figure}

            This shows that the fundamental group of the 4-manifold $X'_i$ (hence also $X_i$) is isomorphic to $\mathbb{Z}$. Since we know it surjects onto the fundamental group of $(\mathbb{R}^4\backslash\Sigma) - (B_i\backslash \mathring{\nu}_{\epsilon_i}(\Sigma_i))$, we can conclude that $\pi_1((\mathbb{R}^4\backslash\Sigma) - (B_i\backslash \mathring{\nu}_{\epsilon_i}(\Sigma_i)))$ is a cyclic group. However, it is straightforward to see that the first homology group $H_1((\mathbb{R}^4\backslash\Sigma) - (B_i\backslash \mathring{\nu}_{\epsilon_i}(\Sigma_i)))$ is isomorphic to $\mathbb{Z}$. This forces the fundamental group $\pi_1((\mathbb{R}^4\backslash\Sigma) - (B_i\backslash \mathring{\nu}_{\epsilon_i}(\Sigma_i)))$ to be infinite cyclic.
            
            This completes the proof that $\Sigma$ is topologically standard. \hfill$\square$

    \subsection{The plane is smoothly non-standard}

        In this subsection, we use end Khovanov homology to prove that the surface $\Sigma$ is not diffeomorphic to the standard $\mathbb{R}^2$ in $\mathbb{R}^4$. Recall that in Section \ref{sec-end}, we calculated that the end Khovanov homology for the standard $\mathbb{R}^2$ in $\mathbb{R}^4$ is concentrated at homological degree $0$. Thus to show that the surface $\Sigma$ is not smoothly standard, all we need to do is to find a nonzero class for $\overleftharpoonup{Kh}(\Sigma)$ that is not in homological degree $0$.

        To find such a nonzero class, we will construct a homology class in $Kh(L_1)$ with bigrading $(h,q)=(-2,-4)$, and prove that it survives the limiting process, i.e., it is in the image of $Kh(\overline{W_2}, \overline{C_2})\circ\cdots\circ Kh(\overline{W_i}, \overline{C_i})$ for all $i>1$. This will give rise to a generator in the end Khovanov homology $\overleftharpoonup{Kh}^{-2,-3}(\Sigma)$, proving that $\Sigma$ is not smoothly equivalent to a standard plane in 4-space.

        \subsubsection{Defining a class in $Kh(L_1)$.}

            As a first step, we use another diagram for the surface $\Sigma$, as depicted in Figure \ref{20250504-8}. The key steps of the isotopy between them are shown in Figure \ref{20250504-9}.

            \begin{figure}[H]
                \centering
                \begin{picture}(17cm,5.57cm)    

                    \put(0cm,0cm){\includegraphics[width=17cm]{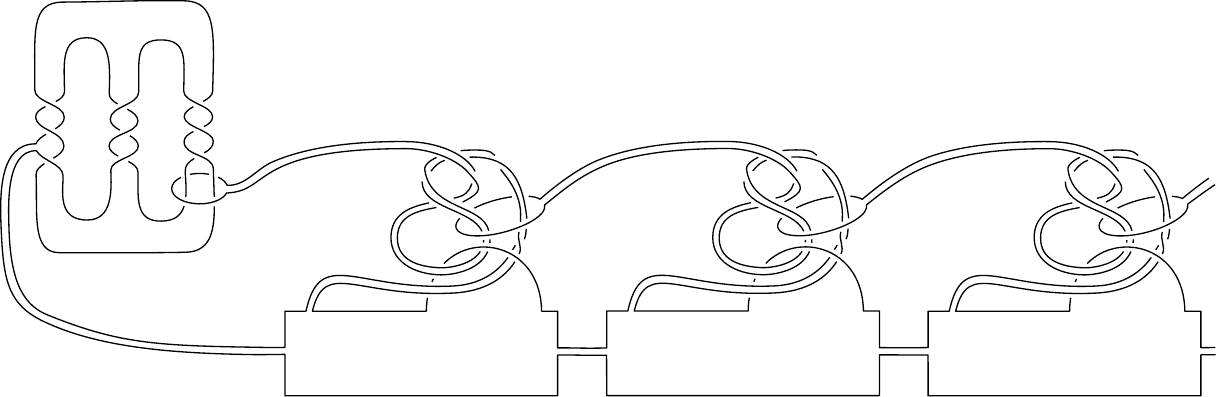}}
                    
                \end{picture}        
                \caption{An alternative diagram for the surface $\Sigma$.}
                \label{20250504-8}
            \end{figure}

            \begin{figure}
                \centering
                \begin{picture}(13.8cm,11.6cm)    

                    \put(0cm,8.4cm){\includegraphics[height=3.2cm]{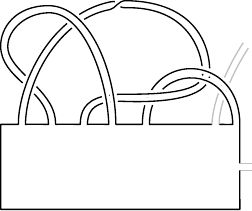}}
                    \put(5cm,8.4cm){\includegraphics[height=3.2cm]{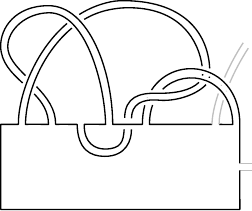}}
                    \put(10cm,8.4cm){\includegraphics[height=3.2cm]{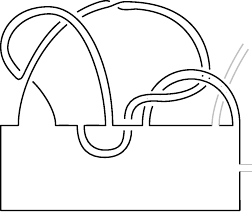}}

                    \put(0cm,4.2cm){\includegraphics[height=3.2cm]{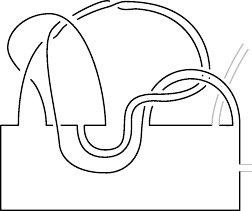}}
                    \put(5cm,4.2cm){\includegraphics[height=3.2cm]{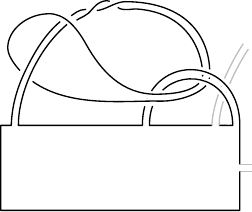}}
                    \put(10cm,4.2cm){\includegraphics[height=3.2cm]{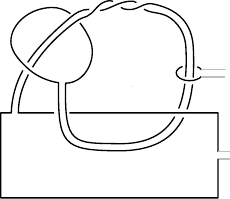}}

                    \put(0cm,0cm){\includegraphics[height=3.2cm]{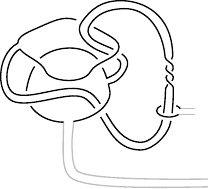}}
                    \put(5cm,0cm){\includegraphics[height=3.2cm]{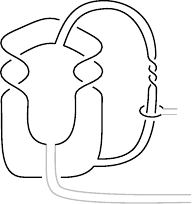}}
                    \put(10cm,0cm){\includegraphics[height=3.2cm]{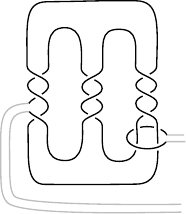}}

                \end{picture}        
                \caption{Key steps for the isotopy between Figure \ref{20250726-1} and Figure \ref{20250504-8}.\vspace{-5pt}}
                \label{20250504-9}
            \end{figure}

            Let $c_1\in CKh(L_1)$ be the cycle defined by the labeled smoothing depicted in Figure \ref{20250504-10}. In Figure \ref{20250504-10}, we highlight 0-smoothed crossings with green arcs. (We will omit these decorations in subsequent diagrams.) Note that this labeled smoothing represents a cycle because every 0-smoothing, when changed into a 1-smoothing, merges two $x$-labels (cf. Lemma \ref{when-cycle}).

            \begin{figure}[H]
                \centering
                \begin{picture}(8cm,4cm)
                    \put(0cm,0cm){\includegraphics[width=3cm, height=4cm]{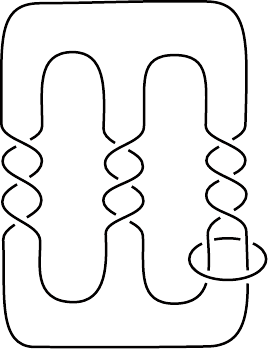}}

                    \put(5cm,0cm){\includegraphics[width=3cm, height=4cm]{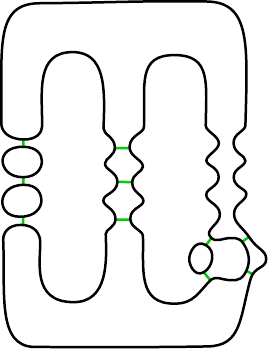}}

                    \put(6.3cm, 0.345cm){\scriptsize{$x$}}
                    \put(5.16cm, 2.075cm){\scriptsize{$x$}}
                    \put(6.88cm, 1.805cm){\scriptsize{$x$}}
                    \put(5.16cm, 1.605cm){\scriptsize{$x$}}
                    \put(7.17cm, 0.975cm){\scriptsize{$x$}}
                         
                \end{picture}        
                \caption{The cycle $c_1$.\vspace{-5pt}}
                \label{20250504-10}
            \end{figure}

            Next we show that $c_1$ represents a nontrivial homology class in $Kh(L_1)$. To do so, we consider the image of $c_1$ under the map induced by the cobordism shown in Figure \ref{20250726-2}. However, the image of $c_1$ is a labeled smoothing that only contains 0-smoothings, so it cannot be a boundary and therefore represents a nontrivial homology class. It follows that $c_1$ is also a nontrivial homology class in $Kh(L_1)$.

            \begin{figure}
                \centering
                \begin{picture}(14.7cm,14.8cm)
                    \put(0cm,10.8cm){\includegraphics[width=3cm, height=4cm]{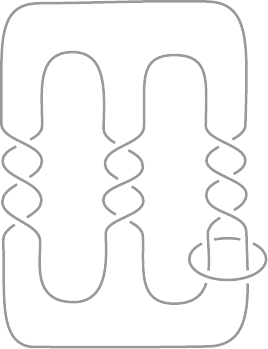}}
                    \put(4.5cm,10.8cm){\includegraphics[width=3cm, height=4cm]{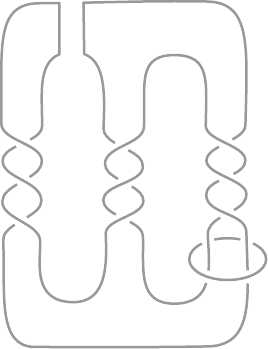}}
                    \put(9cm,10.8cm){\includegraphics[width=2.1cm, height=4cm]{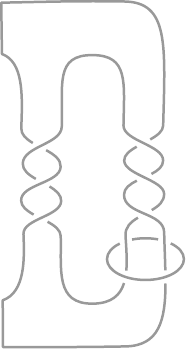}}
                    \put(12.6cm,10.8cm){\includegraphics[width=2.1cm, height=2.6cm]{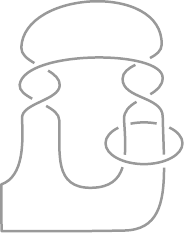}}

                    \put(0cm,6.2cm){\includegraphics[width=3cm, height=4cm]{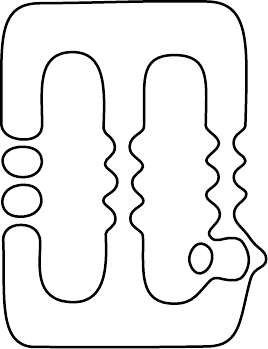}}
                    \put(4.5cm,6.2cm){\includegraphics[width=3cm, height=4cm]{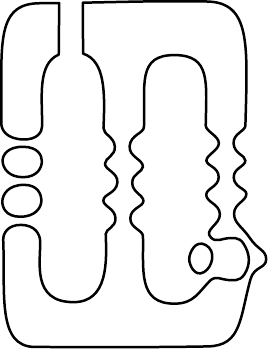}}
                    \put(9cm,6.2cm){\includegraphics[width=2.1cm, height=4cm]{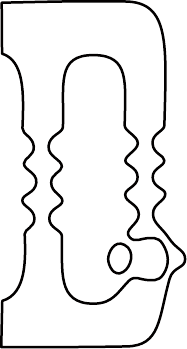}}
                    \put(12.6cm,6.2cm){\includegraphics[width=2.1cm, height=2.6cm]{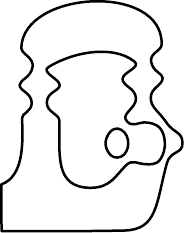}}

                    \put(-1.2cm, 5.4cm){\color{gray}\makebox[\linewidth]{\rule{14.7cm}{0.4pt}}}

                    \put(1.3cm, 6.545cm){\scriptsize{$x$}}
                    \put(0.16cm, 8.275cm){\scriptsize{$x$}}
                    \put(1.88cm, 8.005cm){\scriptsize{$x$}}
                    \put(0.16cm, 7.805cm){\scriptsize{$x$}}
                    \put(2.17cm, 7.175cm){\scriptsize{$x$}}

                    \put(5.8cm, 6.545cm){\scriptsize{$x$}}
                    \put(4.66cm, 8.275cm){\scriptsize{$x$}}
                    \put(6.38cm, 8.005cm){\scriptsize{$x$}}
                    \put(4.66cm, 7.805cm){\scriptsize{$x$}}
                    \put(6.67cm, 7.175cm){\scriptsize{$x$}}
                    \put(4.66cm, 9.30cm){\scriptsize{$x$}}

                    \put(9.4cm, 6.545cm){\scriptsize{$x$}}
                    \put(9.98cm, 8.005cm){\scriptsize{$x$}}
                    \put(10.27cm, 7.175cm){\scriptsize{$x$}}

                    \put(13.0cm, 6.545cm){\scriptsize{$x$}}
                    \put(13.58cm, 7.785cm){\scriptsize{$x$}}
                    \put(13.86cm, 7.145cm){\scriptsize{$x$}}

                    \put(0cm,2.7cm){\includegraphics[width=2.1cm, height=2.1cm]{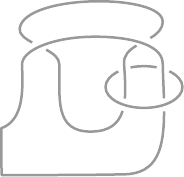}}
                    \put(4.5cm,2.7cm){\includegraphics[width=1.6cm, height=1.6cm]{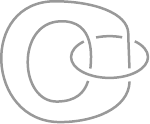}}

                    \put(0cm,0cm){\includegraphics[width=2.1cm, height=2.1cm]{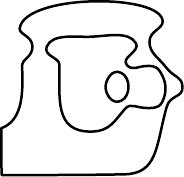}}
                    \put(4.5cm,0cm){\includegraphics[width=1.6cm, height=1.6cm]{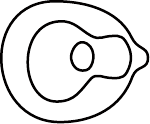}}

                    \put(0.4cm, 0.345cm){\scriptsize{$x$}}
                    \put(0.98cm, 1.385cm){\scriptsize{$x$}}
                    \put(1.26cm, 0.992cm){\scriptsize{$x$}}

                    \put(4.97cm, 0.792cm){\scriptsize{$x$}}
                    \put(5.305cm, 0.792cm){\scriptsize{$x$}}
                    \put(4.57cm, 0.792cm){\scriptsize{$x$}}

                \end{picture}        
                \caption{The cycle $c_1$ under the cobordism map.}
                \label{20250726-2}
            \end{figure}
            
        \begin{remark}
            Note that the cobordism in the calculation above is different from the cobordism $(\overline{W}_1,\overline{C}_1)$, which maps the cycle $c_1$ to $0\in CKh(\varnothing)$.
        \end{remark}

        \subsubsection{It survives the limiting process.}

            Next we show that the homology class defined by the cycle $c_1$ survives the limiting process. More precisely, we show that for every integer $i>1$, there is a cycle $c_i\in CKh(L_i)$ such that $CKh(\overline{W_i},\overline{C_i})(c_i)=c_{i-1}$.

            Let $c_2$ be the cycle defined by the labeled smoothing of $L_2$ shown on the right side of Figure \ref{20250504-4-5} (as before, the labeled smoothing defines a cycle because every 0-smoothing, when changed into a 1-smoothing, merges two $x$-labels). Inductively, we define the cycle $c_i$ by building on the labeled smoothing $c_{i-1}$, adding in the same pattern, as shown in Figure \ref{20250828-1}.

            The calculation shown in Figures \ref{20250801-1} and \ref{20250801-2} illustrates that for all $i>1$, the cobordism-induced map $CKh(\overline{W_i}, \overline{C_i})$ maps $c_i$ to $c_{i-1}.$ This proves that the cycle $c_1$ survives the limiting process. Thus we conclude that the sequence $\{c_i\}_{i\in\mathbb{N}}$ defines a non-zero element in the end Khovanov homology $\overleftharpoonup{Kh}(\Sigma)$. 
            
            Now we claim that this element lies in the bigrading $(-2,-3)$ in $\overleftharpoonup{Kh}(\Sigma)$. Indeed, since $c_1\in CKh(L_1)$ has bigrading $(-2,-4)$, the bigrading $(h,q)$ of this element in the end Khovanov homology satisfies $(h,q+\chi(\Sigma\backslash\Sigma_1)) = (-2,-4)$. As $\chi(\Sigma_1)=2$, and $\chi(\Sigma\backslash\Sigma_1) = -1$, it follows that $\{c_i\}_{i\in\mathbb{N}}$ is a nontrivial class in $\overleftharpoonup{Kh}^{(-2,-3)}(\Sigma)$, which completes the proof that $\Sigma$ is not smoothly equivalent to a standard plane in 4-space.

            \begin{figure}
                \centering
                \begin{picture}(16.94cm,6.05cm)
                    \put(0cm,0cm){\includegraphics[width=7.97cm]{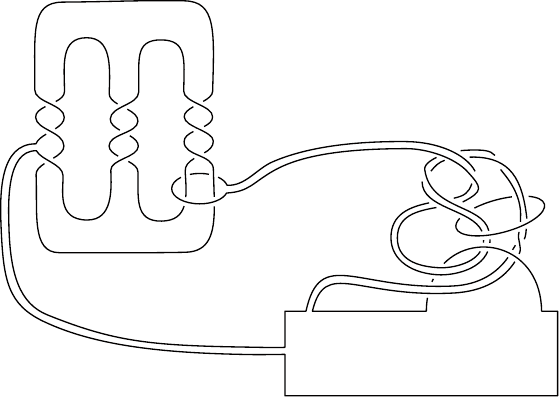}}
                    \put(8.97cm,0cm){\includegraphics[width=7.97cm]{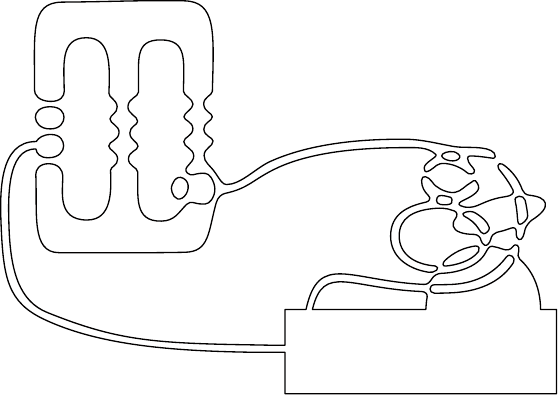}}

                    \put(14.08cm, 1.305cm){\tinytiny{$1$}}
                    \put(15.50cm, 1.905cm){\tinytiny{$1$}}
                    \put(15.71cm, 1.525cm){\tinyone{$x$}}
                    \put(15.265cm, 2.720cm){\tinyone{$x$}}
                    
                    \put(15.745cm, 2.425cm){\tinyone{$x$}}
                    \put(16.02cm, 2.555cm){\tinyone{$x$}}
                    \put(16.36cm, 2.61cm){\tinyone{$x$}}
                    
                    \put(15.35cm, 3.355cm){\tinyone{$x$}}     

                    \put(10.65cm, 2.345cm){\scriptsize{$x$}}
                    \put(9.61cm, 3.875cm){\scriptsize{$x$}}
                    \put(11.18cm, 3.705cm){\scriptsize{$x$}}
                    \put(9.61cm, 3.455cm){\scriptsize{$x$}}
                    \put(11.47cm, 2.875cm){\scriptsize{$x$}}

                \end{picture}        
                \caption{The link $L_2$ and the corresponding cycle $c_2$.\vspace{1.5cm}}
                \label{20250504-4-5}
            \end{figure}

            \begin{figure}
                \centering
                \begin{picture}(12.36cm,12.10cm)
                    \put(0cm,6.55cm){\includegraphics[width=12.36cm]{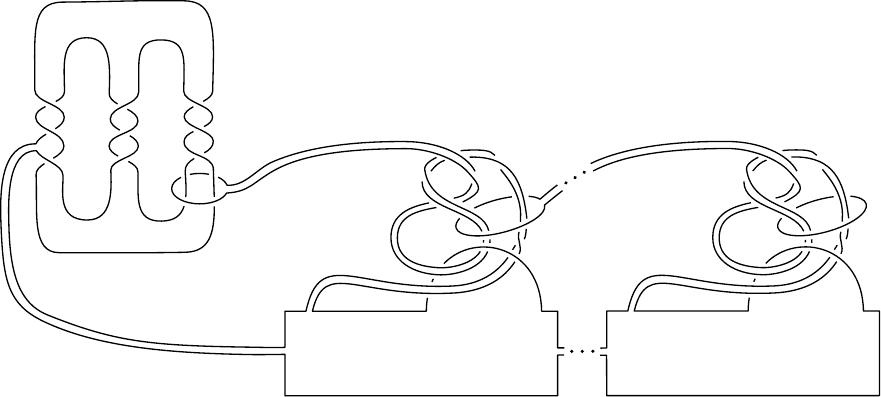}}
                    \put(0cm,0cm){\includegraphics[width=12.36cm]{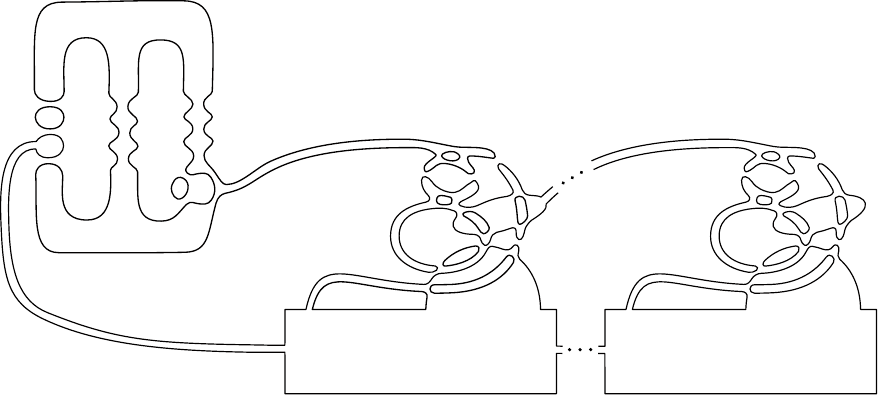}}

                    \put(5.11cm, 1.305cm){\tinytiny{$1$}}
                    \put(6.43cm, 1.875cm){\tinytiny{$1$}}
                    \put(6.64cm, 1.510cm){\tinyone{$x$}}
                    \put(6.195cm, 2.695cm){\tinyone{$x$}}                    
                    \put(6.665cm, 2.395cm){\tinyone{$x$}}
                    \put(6.95cm, 2.525cm){\tinyone{$x$}}
                    \put(7.275cm, 2.57cm){\tinyone{$x$}}              
                    \put(6.28cm, 3.31cm){\tinyone{$x$}}     

                    \put(1.66cm, 2.345cm){\scriptsize{$x$}}
                    \put(0.62cm, 3.850cm){\scriptsize{$x$}}
                    \put(2.21cm, 3.605cm){\scriptsize{$x$}}
                    \put(0.62cm, 3.440cm){\scriptsize{$x$}}
                    \put(2.47cm, 2.845cm){\scriptsize{$x$}}

                    \put(9.61cm, 1.305cm){\tinytiny{$1$}}
                    \put(10.93cm, 1.875cm){\tinytiny{$1$}}
                    \put(11.14cm, 1.510cm){\tinyone{$x$}}
                    \put(10.695cm, 2.695cm){\tinyone{$x$}}               
                    \put(11.165cm, 2.395cm){\tinyone{$x$}}
                    \put(11.45cm, 2.525cm){\tinyone{$x$}}
                    \put(11.775cm, 2.57cm){\tinyone{$x$}}              
                    \put(10.78cm, 3.31cm){\tinyone{$x$}}

                \end{picture}        
                \caption{The link $L_i$ and the corresponding cycle $c_i$.\vspace{1.5cm}}
                \label{20250828-1}
            \end{figure}

            \begin{figure}[H]
                \centering
                \begin{picture}(17cm,23.1cm)
                    \put(0cm,19.6cm){\includegraphics[height=3.5cm]{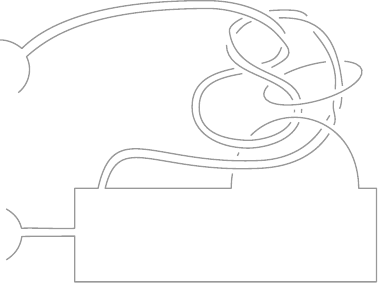}}
                    \put(6.2cm,19.6cm){\includegraphics[height=3.5cm]{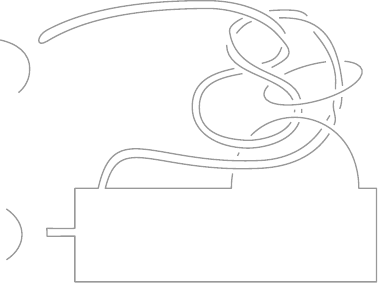}}
                    \put(12.4cm,19.6cm){\includegraphics[height=3.5cm]{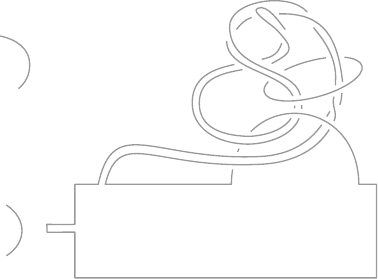}}

                    \put(0cm,15.8cm){\includegraphics[height=3.5cm]{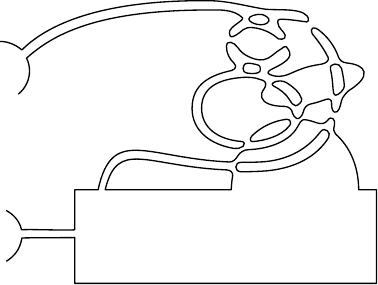}}
                    \put(6.2cm,15.8cm){\includegraphics[height=3.5cm]{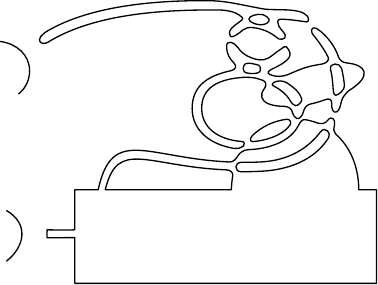}}
                    \put(12.4cm,15.8cm){\includegraphics[height=3.4cm]{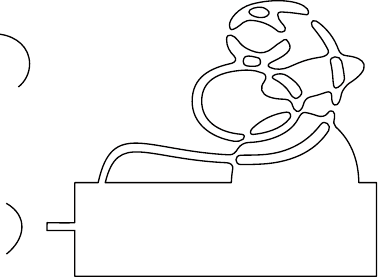}}

                    \put(0cm, 15.4cm){\color{gray}\makebox[\linewidth]{\rule{\textwidth}{0.4pt}}}
                    
                    \put(0.05cm, 16.355cm){\tinytiny{$x$}}
                    \put(0.05cm, 18.405cm){\tinytiny{$x$}}
                    \put(1.98cm, 17.055cm){\tinytiny{$1$}}
                    \put(3.27cm, 17.635cm){\tinytiny{$1$}}
                    \put(3.49cm, 17.275cm){\tinytiny{$x$}}
                    \put(3.02cm, 18.4115cm){\tinytiny{$x$}}
                    \put(3.500cm, 18.135cm){\tinytiny{$x$}}
                    \put(3.75cm, 18.205cm){\tinytiny{$x$}}
                    \put(4.095cm, 18.275cm){\tinytiny{$x$}}
                    \put(3.12cm, 19.015cm){\tinytiny{$x$}}

                    \put(6.25cm, 16.355cm){\tinytiny{$x$}}
                    \put(6.25cm, 18.405cm){\tinytiny{$x$}}
                    \put(8.18cm, 17.055cm){\tinytiny{$1$}}
                    \put(9.47cm, 17.635cm){\tinytiny{$1$}}
                    \put(9.69cm, 17.275cm){\tinytiny{$x$}}
                    \put(9.22cm, 18.4115cm){\tinytiny{$x$}}
                    \put(9.700cm, 18.135cm){\tinytiny{$x$}}
                    \put(9.95cm, 18.205cm){\tinytiny{$x$}}
                    \put(10.295cm, 18.275cm){\tinytiny{$x$}}
                    \put(9.32cm, 19.015cm){\tinytiny{$x$}}
                    \put(8.795cm, 16.355cm){\tinytiny{$x$}}
                    \put(9.50cm, 18.910cm){\tinytiny{$x$}}                  

                    \put(12.45cm, 16.355cm){\tinytiny{$x$}}
                    \put(12.45cm, 18.405cm){\tinytiny{$x$}}
                    \put(14.38cm, 17.055cm){\tinytiny{$1$}}
                    \put(15.67cm, 17.625cm){\tinytiny{$1$}}
                    \put(15.89cm, 17.275cm){\tinytiny{$x$}}
                    \put(15.42cm, 18.4115cm){\tinytiny{$x$}}
                    \put(15.895cm, 18.125cm){\tinytiny{$x$}}
                    \put(16.15cm, 18.205cm){\tinytiny{$x$}}
                    \put(16.480cm, 18.275cm){\tinytiny{$x$}}
                    \put(15.52cm, 19.010cm){\tinytiny{$x$}}
                    \put(14.995cm, 16.355cm){\tinytiny{$x$}}
                    \put(15.685cm, 18.910cm){\tinytiny{$x$}}

                    \put(0cm,11.6cm){\includegraphics[height=3.5cm, width=4.6cm]{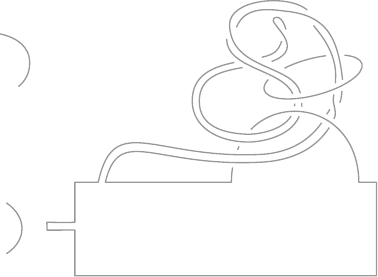}}
                    \put(6.2cm,11.6cm){\includegraphics[height=3.5cm, width=4.6cm]{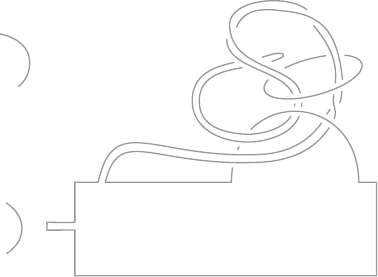}}
                    \put(12.4cm,11.6cm){\includegraphics[height=3.5cm, width=4.6cm]{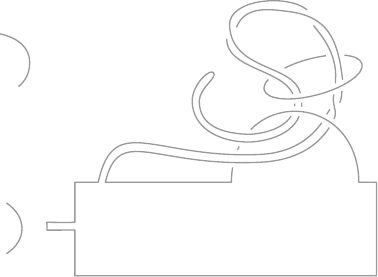}}

                    \put(0cm,7.9cm){\includegraphics[height=3.4cm]{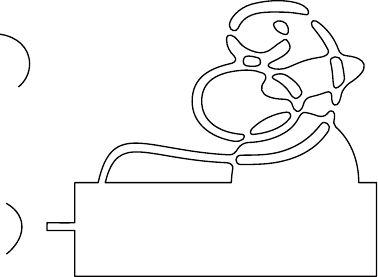}}
                    \put(6.2cm,7.9cm){\includegraphics[height=3.4cm]{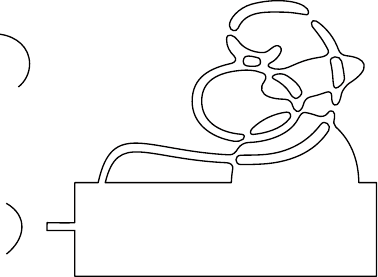}}
                    \put(12.4cm,7.9cm){\includegraphics[height=3.4cm]{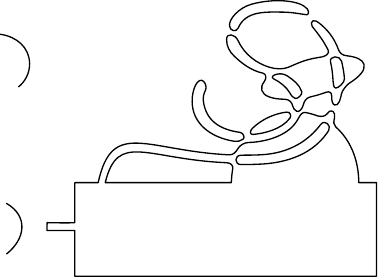}}

                    \put(0cm, 7.5cm){\color{gray}\makebox[\linewidth]{\rule{\textwidth}{0.4pt}}}

                    \put(0.05cm, 8.455cm){\tinytiny{$x$}}
                    \put(0.05cm, 10.505cm){\tinytiny{$x$}}
                    \put(1.98cm, 9.155cm){\tinytiny{$1$}}
                    \put(3.27cm, 9.730cm){\tinytiny{$1$}}
                    \put(3.49cm, 9.375cm){\tinytiny{$x$}}
                    \put(3.02cm, 10.5115cm){\tinytiny{$x$}}
                    \put(3.495cm, 10.225cm){\tinytiny{$x$}}
                    \put(3.75cm, 10.305cm){\tinytiny{$x$}}
                    \put(4.080cm, 10.375cm){\tinytiny{$x$}}
                    \put(3.22cm, 11.175cm){\tinytiny{$x$}}
                    \put(2.595cm, 8.455cm){\tinytiny{$x$}}
                    \put(3.395cm, 10.920cm){\tinytiny{$x$}}

                    \put(6.25cm, 8.455cm){\tinytiny{$x$}}
                    \put(6.25cm, 10.505cm){\tinytiny{$x$}}
                    \put(8.18cm, 9.155cm){\tinytiny{$1$}}
                    \put(9.47cm, 9.730cm){\tinytiny{$1$}}
                    \put(9.69cm, 9.375cm){\tinytiny{$x$}}
                    \put(9.22cm, 10.5115cm){\tinytiny{$x$}}
                    \put(9.695cm, 10.225cm){\tinytiny{$x$}}
                    \put(9.95cm, 10.305cm){\tinytiny{$x$}}
                    \put(10.280cm, 10.375cm){\tinytiny{$x$}}
                    \put(9.42cm, 11.175cm){\tinytiny{$x$}}
                    \put(8.795cm, 8.455cm){\tinytiny{$x$}}

                    \put(12.45cm, 8.455cm){\tinytiny{$x$}}
                    \put(12.45cm, 10.505cm){\tinytiny{$x$}}
                    \put(14.38cm, 9.155cm){\tinytiny{$1$}}
                    \put(15.67cm, 9.730cm){\tinytiny{$1$}}
                    \put(15.89cm, 9.375cm){\tinytiny{$x$}}
                    \put(14.795cm, 9.875cm){\tinytiny{$x$}}
                    \put(15.895cm, 10.225cm){\tinytiny{$x$}}
                    \put(16.15cm, 10.305cm){\tinytiny{$x$}}
                    \put(16.480cm, 10.375cm){\tinytiny{$x$}}
                    \put(15.62cm, 11.175cm){\tinytiny{$x$}}
                    \put(14.995cm, 8.455cm){\tinytiny{$x$}}

                    \put(0cm,3.7cm){\includegraphics[height=3.5cm, width=4.6cm]{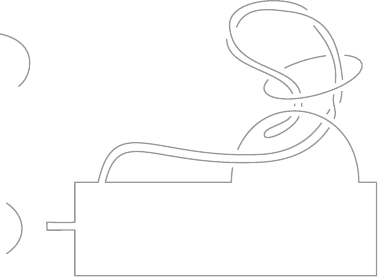}}
                    \put(6.2cm,3.7cm){\includegraphics[height=3.5cm, width=4.6cm]{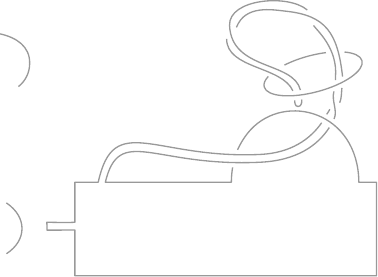}}
                    \put(12.4cm,3.7cm){\includegraphics[height=3.5cm, width=4.6cm]{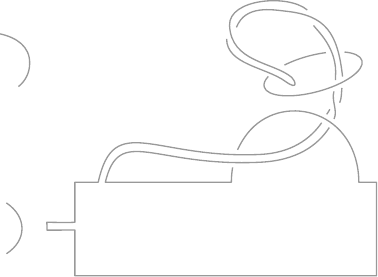}}

                    \put(0cm,0cm){\includegraphics[height=3.4cm]{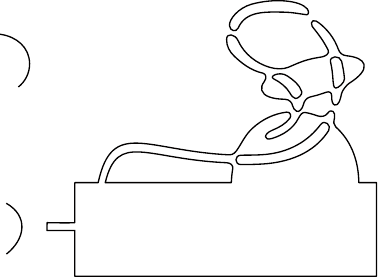}}
                    \put(6.2cm,0cm){\includegraphics[height=3.4cm]{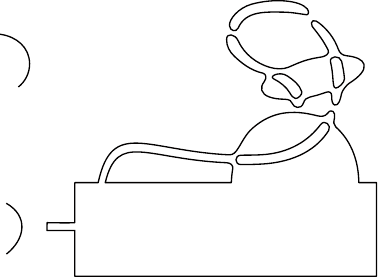}}
                    \put(12.4cm,0cm){\includegraphics[height=3.4cm]{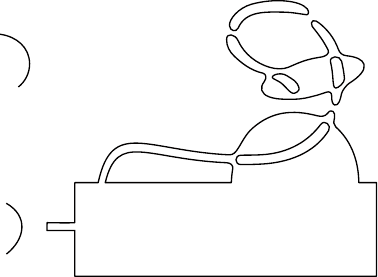}}

                    \put(0.05cm, 0.555cm){\tinytiny{$x$}}
                    \put(0.05cm, 2.605cm){\tinytiny{$x$}}
                    \put(1.98cm, 1.255cm){\tinytiny{$1$}}
                    \put(3.49cm, 1.475cm){\tinytiny{$x$}}
                    \put(3.495cm, 2.325cm){\tinytiny{$x$}}
                    \put(3.75cm, 2.405cm){\tinytiny{$x$}}
                    \put(4.080cm, 2.475cm){\tinytiny{$x$}}
                    \put(3.22cm, 3.275cm){\tinytiny{$x$}}
                    \put(2.595cm, 0.555cm){\tinytiny{$x$}}

                    \put(6.25cm, 0.555cm){\tinytiny{$x$}}
                    \put(6.25cm, 2.605cm){\tinytiny{$x$}}
                    \put(8.18cm, 1.255cm){\tinytiny{$1$}}
                    \put(9.69cm, 1.475cm){\tinytiny{$x$}}
                    \put(9.695cm, 2.325cm){\tinytiny{$x$}}
                    \put(9.95cm, 2.405cm){\tinytiny{$x$}}
                    \put(10.280cm, 2.475cm){\tinytiny{$x$}}
                    \put(9.42cm, 3.275cm){\tinytiny{$x$}}
                    \put(8.795cm, 0.555cm){\tinytiny{$x$}}

                    \put(12.45cm, 0.555cm){\tinytiny{$x$}}
                    \put(12.45cm, 2.605cm){\tinytiny{$x$}}
                    \put(14.38cm, 1.255cm){\tinytiny{$1$}}
                    \put(15.89cm, 1.475cm){\tinytiny{$x$}}
                    \put(15.895cm, 2.325cm){\tinytiny{$x$}}
                    \put(16.15cm, 2.405cm){\tinytiny{$x$}}
                    \put(16.480cm, 2.475cm){\tinytiny{$x$}}
                    \put(15.62cm, 3.275cm){\tinytiny{$x$}}
                    \put(14.995cm, 0.555cm){\tinytiny{$x$}}       
                    
                \end{picture}        
                \caption{The cobordism-induced map $CKh(\overline{W_i}, \overline{C_i})$ maps $c_i$ to $c_{i-1}.$}
                \label{20250801-1}
            \end{figure}

            \begin{figure}[H]
                \centering
                \begin{picture}(17cm,14.2cm)
                    \put(0cm,10.8cm){\includegraphics[height=3.4cm]{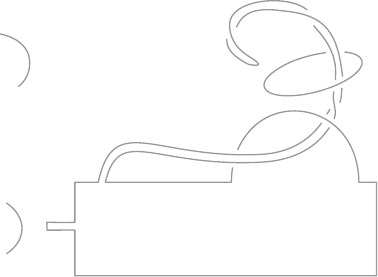}}
                    \put(6.2cm,10.8cm){\includegraphics[height=2.95cm]{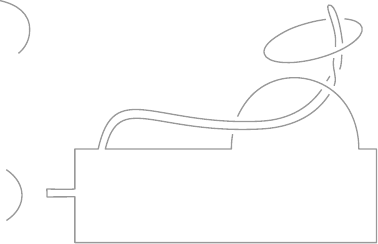}}
                    \put(12.4cm,10.8cm){\includegraphics[height=2.95cm]{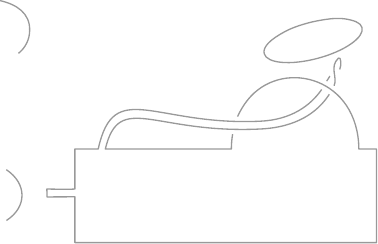}}

                    \put(0cm,7.1cm){\includegraphics[height=3.4cm]{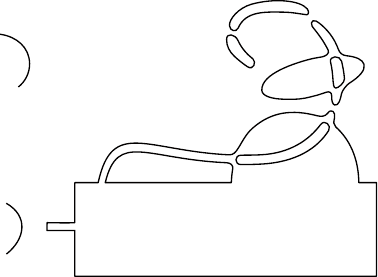}}
                    \put(6.2cm,7.1cm){\includegraphics[height=2.95cm]{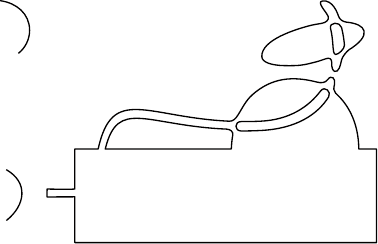}}
                    \put(12.4cm,7.1cm){\includegraphics[height=2.95cm]{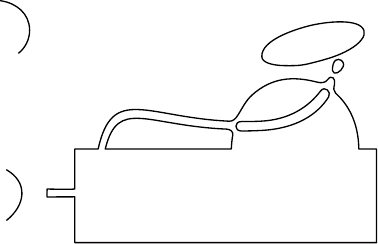}}

                    \put(0cm, 6.7cm){\color{gray}\makebox[\linewidth]{\rule{\textwidth}{0.4pt}}}

                    \put(0.05cm, 7.655cm){\tinytiny{$x$}}
                    \put(0.05cm, 9.705cm){\tinytiny{$x$}}
                    \put(1.98cm, 8.355cm){\tinytiny{$1$}}
                    \put(3.49cm, 8.575cm){\tinytiny{$x$}}
                    \put(2.855cm, 9.825cm){\tinytiny{$x$}}
                    \put(3.75cm, 9.505cm){\tinytiny{$x$}}
                    \put(4.080cm, 9.575cm){\tinytiny{$x$}}
                    \put(3.22cm, 10.375cm){\tinytiny{$x$}}
                    \put(2.595cm, 7.655cm){\tinytiny{$x$}}

                    \put(6.25cm, 7.655cm){\tinytiny{$x$}}
                    \put(6.25cm, 9.705cm){\tinytiny{$x$}}
                    \put(8.18cm, 8.355cm){\tinytiny{$1$}}
                    \put(9.69cm, 8.575cm){\tinytiny{$x$}}
                    \put(9.95cm, 9.505cm){\tinytiny{$x$}}
                    \put(10.225cm, 9.555cm){\tinytiny{$x$}}
                    \put(8.795cm, 7.655cm){\tinytiny{$x$}}

                    \put(12.45cm, 7.655cm){\tinytiny{$x$}}
                    \put(12.45cm, 9.705cm){\tinytiny{$x$}}
                    \put(14.38cm, 8.355cm){\tinytiny{$1$}}
                    \put(15.89cm, 8.575cm){\tinytiny{$x$}}
                    \put(16.15cm, 9.505cm){\tinytiny{$x$}}
                    \put(16.425cm, 9.205cm){\tinytiny{$x$}}
                    \put(14.995cm, 7.655cm){\tinytiny{$x$}}

                    \put(0cm,3.4cm){\includegraphics[height=2.95cm]{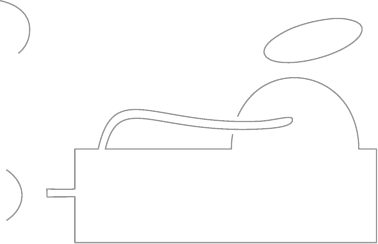}}
                    \put(6.2cm,3.4cm){\includegraphics[height=2.95cm]{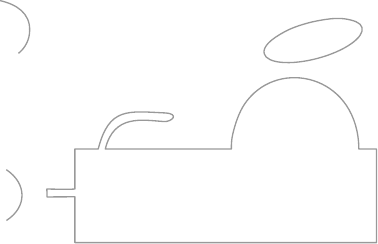}}
                    \put(12.4cm,3.6cm){\includegraphics[height=2.75cm]{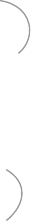}}

                    \put(0cm,0cm){\includegraphics[height=2.95cm]{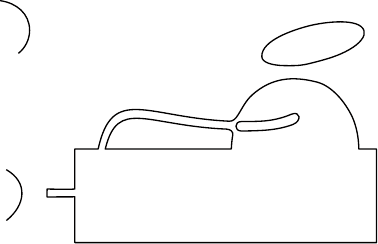}}
                    \put(6.2cm,0cm){\includegraphics[height=2.95cm]{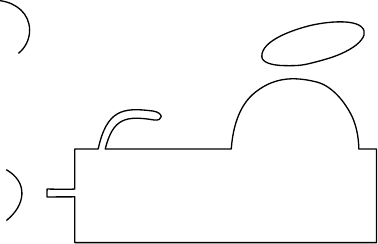}}
                    \put(12.4cm,0.2cm){\includegraphics[height=2.75cm]{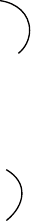}}

                    \put(0.05cm, 0.555cm){\tinytiny{$x$}}
                    \put(0.05cm, 2.605cm){\tinytiny{$x$}}
                    \put(1.98cm, 1.235cm){\tinytiny{$1$}}
                    \put(3.19cm, 1.390cm){\tinytiny{$x$}}
                    \put(3.75cm, 2.405cm){\tinytiny{$x$}}
                    \put(2.595cm, 0.555cm){\tinytiny{$x$}}

                    \put(6.25cm, 0.555cm){\tinytiny{$x$}}
                    \put(6.25cm, 2.605cm){\tinytiny{$x$}}
                    \put(9.95cm, 2.405cm){\tinytiny{$x$}}
                    \put(8.795cm, 0.555cm){\tinytiny{$x$}}

                    \put(12.45cm, 0.555cm){\tinytiny{$x$}}
                    \put(12.45cm, 2.605cm){\tinytiny{$x$}}

                \end{picture}        
                \caption{The cobordism-induced map $CKh(\overline{W_i}, \overline{C_i})$ maps $c_i$ to $c_{i-1}$ (continued).}
                \label{20250801-2}
            \end{figure}            

    \section{The Lagrangian and symplectic setting}{\label{sec-lagrangian}}

        Finally, we show that the surface $\Sigma$ can be smoothly isotoped into a Lagrangian submanifold of the standard symplectic $\mathbb{R}^4$.
        
        We work in the symplectization of the standard contact $S^3$, which is naturally symplectomorphic to the complement of the origin in $(\mathbb{R}^4,\omega_{\text{std}})$. Here we recall that the \textit{symplectization} of a contact 3-manifold $(Y,\xi=\operatorname{ker}(\alpha))$ is the 4-manifold $Y\times \mathbb{R}$ together with the symplectic form $\omega = \operatorname{d}(e^{t}\alpha).$ Furthermore, given a pair of numbers $a<b$, we have a \textit{truncated symplectization} given by the symplectic submanifold $Y\times [a,b]$.
        
        To prove that the surface $\Sigma$ can be smoothly isotoped into a Lagrangian submanifold, we need a modification of the following theorem.

        \begin{thrm}[\cite{BST15,Cha10,EHK16,Riz16}]{\label{thrm-Lag}}
            If two Legendrian links $L_-$ and $L_+$ in the standard contact $S^3$ are related by any of the following three moves, then there exists an exact, embedded, orientable, and collared Lagrangian cobordism from $L_-$ to $L_+$.
                \begin{itemize}
                    \item \textbf{Isotopy}: $L_-$ and $L_+$ are Legendrian isotopic.
                    \item \textbf{0-Handle addition}: The front of $L_+$ is the same as that of $L_-$ except for the addition of a disjoint Legendrian unknot as in the left side of Figure \ref{20250504-1}.
                    \item \textbf{1-Handle addition}: The fronts of $L_\pm$ are related by a Legendrian band move, i.e., the move shown on the right side of Figure \ref{20250504-1}.
                \end{itemize}

                \begin{figure}[H]
                    \includegraphics[width=7cm]{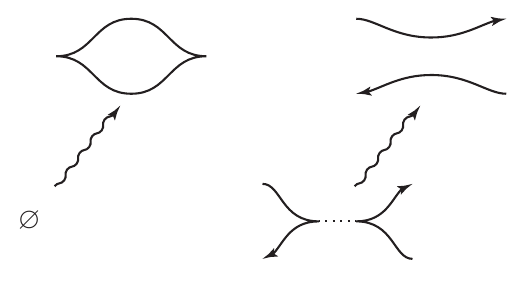}
                    \caption{Diagram moves corresponding to attaching a 0-handle and an oriented 1-handle. Picture from \cite{HS15}.}
                    \label{20250504-1}
                \end{figure}
                
        \end{thrm}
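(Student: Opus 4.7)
The plan is to treat each of the three elementary moves separately, constructing a local exact Lagrangian model inside a truncated symplectization $(S^3\times [a,b], d(e^t\alpha))$, and then concatenate these models using their collared endpoints. Since the symplectization of $(S^3, \xi_{\mathrm{std}})$ is symplectomorphic to $(\mathbb{R}^4\setminus\{0\}, \omega_{\mathrm{std}})$, such a concatenation yields an exact embedded orientable Lagrangian cobordism in the desired symplectic manifold. The overall statement is classical and due to the cited sources; the proposal below records the strategy I would follow to assemble a proof.

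For the isotopy move, given a Legendrian isotopy $\{L_t\}_{t\in [0,1]}$ from $L_-$ to $L_+$, I would first extend it to a compactly supported ambient contact isotopy $\phi_t$ of $(S^3, \alpha_{\mathrm{std}})$ via the Legendrian neighborhood theorem. The naive trace $(x,t)\mapsto(\phi_t(x),t)$ is not Lagrangian, but correcting it by the Reeb shift determined by the contact Hamiltonian $H_t$ generating $\phi_t$ produces an embedded Lagrangian cylinder in $(S^3\times\mathbb{R}, d(e^t\alpha))$ via a standard computation. Reparametrizing so that $\phi_t$ is stationary near $t=0$ and $t=1$ yields the collaring, and exactness of the restricted primitive follows since the cylinder is simply connected.

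For the 0-handle move, I would use the explicit exact Lagrangian disk filling of the standard $\mathrm{tb}=-1$ Legendrian unknot inside a small Darboux ball; placed in a thin slab of the symplectization with its boundary along $S^3\times\{t_0\}$ and matched to the collaring, this provides the birth cobordism from $\varnothing$ to the new unknot. For the 1-handle move, I would appeal to the local model of Ekholm-Honda-K\'alm\'an and Dimitroglou Rizell: in a Darboux chart where the two band endpoints lie in standard position, there is an explicit exact Lagrangian pair-of-pants that realizes the saddle move on fronts, and exactness follows by writing down a primitive directly in the local coordinates.

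The main obstacle is ensuring that these local pieces glue smoothly into a globally embedded, exact Lagrangian surface, rather than producing three separate constructions that fail to match. This is precisely what the collared condition buys us: in a neighborhood of each gluing slice $S^3\times\{t_i\}$ every piece is the trivial Legendrian cylinder $L_{t_i}\times[t_i-\epsilon, t_i+\epsilon]$, so smoothness across the interface and compatibility of primitives of $e^t\alpha|_\Sigma$ are automatic up to adding constants on each component. The remaining verification that concatenation preserves embeddedness, orientability, and exactness is then routine bookkeeping, once one has fixed orientations consistently on the 1-handle model.
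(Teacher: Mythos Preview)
The paper does not prove this theorem at all: it is stated with attribution to \cite{BST15,Cha10,EHK16,Riz16} and used as a black box, and the only proof in this section is of the subsequent Corollary~\ref{cor-Lag}, which adapts the cited result to the noncompact setting by stacking the compact cobordisms produced by Theorem~\ref{thrm-Lag}. So there is nothing in the paper to compare your argument against.

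That said, your sketch is the standard route taken in the cited references. The isotopy case via a contact-Hamiltonian-corrected trace is Chantraine's argument, and the local pair-of-pants model for the 1-handle is the Ekholm--Honda--K\'alm\'an/Dimitroglou Rizell construction you name. One small caution: exactness of the isotopy cylinder does not follow from simple connectivity (the cylinder $L_-\times\mathbb{R}$ is not simply connected unless $L_-$ is a single component), but rather from the fact that the primitive $e^t\alpha$ already restricts to an exact $1$-form on any Legendrian cylinder, and the corrected trace inherits this. Otherwise your outline is sound and matches what the cited papers do.
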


        The result is easily adapted to the noncompact setting of $(\mathbb{R}^4,\omega_{\text{std}})$. Here we let $(S_r, \xi_r)$ denote the 3-sphere of radius $r$ with its induced standard contact structure.

        \begin{cor}{\label{cor-Lag}}
            Let $\Sigma$ be a properly embedded surface in $\mathbb{R}^4$ described by a movie of link diagrams $L_r:=\Sigma\cap S_r$ (with $L_0=\varnothing$). If, for each positive integer $i$, 

            \begin{enumerate}
                \item the link $L_i$ is a Legendrian link in each $(S_i,\xi_i)$, and
                \item the link $L_{i}$ is obtained from $L_{i-1}$ by a finite sequence of Legendrian isotopies, 0-handle additions, and 1-handle additions as shown in Figure \ref{20250504-1},
            \end{enumerate}
            then $\Sigma$ is smoothly isotopic to a Lagrangian surface in $(\mathbb{R}^4,\omega_{\operatorname{std}})$.
            
        \end{cor}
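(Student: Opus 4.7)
The plan is to realize $\Sigma$ slice by slice as a Lagrangian surface inside truncated symplectizations, and then invoke the fact that the smooth isotopy type of a surface in $\mathbb{R}^4$ is determined by its movie together with the sequence of elementary handle moves between consecutive slices. To set things up, I would identify $(\mathbb{R}^4 \setminus \{0\}, \omega_{\text{std}})$ with the symplectization $(S^3 \times \mathbb{R}, d(e^t \alpha_{\text{std}}))$ via the radial Liouville vector field; under this identification each sphere $S_i$ corresponds to a slice $S^3 \times \{t_i\}$ with the induced standard contact structure, and each annular region $W_i = B_i \setminus \mathring{B}_{i-1}$ corresponds to a truncated symplectization $S^3 \times [t_{i-1}, t_i]$. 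The hypothesis that $L_i$ is Legendrian in $(S_i, \xi_i)$ is precisely what lets us apply Theorem \ref{thrm-Lag} in each slab.

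For each $i \geq 2$, I would feed the given sequence of Legendrian isotopies, 0-handle additions, and 1-handle additions relating $L_{i-1}$ to $L_i$ into Theorem \ref{thrm-Lag}, obtaining an exact, embedded, orientable, collared Lagrangian cobordism $\Lambda_i \subset S^3 \times [t_{i-1}, t_i]$ with $\partial \Lambda_i = L_{i-1} \sqcup L_i$. For $i = 1$, where $L_0 = \varnothing$, the sequence must begin with 0-handle additions creating small disjoint Legendrian unknots; these are capped locally by standard Lagrangian disks, and the subsequent 1-handles attach Lagrangian bands between them. The result is a compact Lagrangian filling $\Lambda_1 \subset S^3 \times [t_0, t_1]$ with $\partial \Lambda_1 = L_1$ and no boundary on $S^3 \times \{t_0\}$.

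The collared condition in Theorem \ref{thrm-Lag} ensures that near each slice $S^3 \times \{t_i\}$ the pieces $\Lambda_i$ and $\Lambda_{i+1}$ agree with the Lagrangian cylinder over $L_i$, so they glue smoothly to yield a properly embedded Lagrangian submanifold $\Lambda = \bigcup_i \Lambda_i \subset \mathbb{R}^4 \setminus \mathring{B}_0$. Since $\Lambda_1$ is compact and disjoint from $S_0$ (as $L_0 = \varnothing$), $\Lambda$ sits properly in $\mathbb{R}^4$. By construction $\Lambda$ and $\Sigma$ have the same movie $\{L_r\}$ and are built inside each $W_i$ by the same sequence of elementary cobordism moves, so the pieces $\Sigma \cap W_i$ and $\Lambda_i$ are smoothly isotopic rel $L_{i-1} \sqcup L_i$; concatenating these isotopies gives a smooth ambient isotopy of $\mathbb{R}^4$ taking $\Sigma$ to $\Lambda$.

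The main technical obstacle I foresee is the final smooth-isotopy step: one needs to verify that the Legendrian 0- and 1-handle moves pictured in Figure \ref{20250504-1} realize the same smooth surface-handle attachments used in the construction of $\Sigma$ in Section \ref{exotic-construction}, so that $\Lambda_i$ and $\Sigma \cap W_i$ admit matching handle decompositions and are therefore smoothly isotopic rel boundary. Granting this standard identification between the Legendrian and smooth handle pictures, together with the gluing of local isotopies via the collars, the rest of the argument is essentially formal.
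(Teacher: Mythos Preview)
Your approach is essentially the same as the paper's: build Lagrangian cobordisms piece by piece via Theorem~\ref{thrm-Lag} and stack them. However, you gloss over the one genuinely technical point that the paper's proof isolates. You write that Theorem~\ref{thrm-Lag} yields a Lagrangian cobordism $\Lambda_i \subset S^3 \times [t_{i-1}, t_i]$, but the theorem does not let you prescribe the interval: the constructions in \cite{BST15,Cha10,EHK16,Riz16} produce a cobordism in $S^3 \times \mathbb{R}$ that is cylindrical outside \emph{some} compact slab $S^3 \times [a_i, b_i]$, and there is no reason these slabs fit together as $i$ varies. The paper's fix is to observe that although translation $t \mapsto t + c$ in the symplectization is \emph{not} a symplectomorphism (it scales $\omega = d(e^t\alpha)$ by $e^c$), it nonetheless preserves Lagrangian submanifolds and acts by contactomorphisms on the slices. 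This is what allows you to translate each $\Lambda_i$ so that $a_{i+1} = b_i$ and then glue along the collars. Without this observation your stacking step is incomplete.

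On the other hand, you are more careful than the paper about the final step: the paper simply asserts that the resulting stacked surface ``is'' the desired Lagrangian surface, whereas you correctly note that one must argue it is smoothly isotopic to $\Sigma$ rel the movie, via matching handle decompositions in each $W_i$. That part of your argument is fine and arguably fills a small expository gap in the paper's own proof.
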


        \begin{proof}
            The constructions underlying Theorem \ref{thrm-Lag} (from \cite{BST15,Cha10,EHK16,Riz16}) naturally yield a noncompact cobordism in $S^3\times \mathbb{R}$ whose ends are cylindrical outside of a compact subset $S^3\times [a,b].$ Therefore, for each integer $i\geq 1$, there exists a compact Lagrangian cobordism (with collared ends) between $L_{i-1}$ and $L_i$ in some compact subset $S^3\times [a_i,b_i]$.

            We wish to produce $\Sigma$ by stacking these compact cobordisms, but this requires translating them within $S^3\times \mathbb{R}$ to match up their ends. Although translation is not a symplectomorphism of the symplectization $(S^3\times \mathbb{R}, \operatorname{d}(e^t\alpha))$, it does preserve Lagrangian submanifolds and carry each level set $S^3\times \{t\}$ contactomorphically onto its image (via the natural identification). Therefore, after translations, we may assume $a_{i+1}=b_{i}$ for all positive integers $i$. Hence, we may stack the cobordisms between the links $L_i$. The resulting surface $\Sigma$ is the desired Lagrangian surface.
        \end{proof}

        For the remainder of this section, we apply this corollary to show that the surface $\Sigma$ described in Figure \ref{20250726-1} is smoothly isotopic to a Lagrangian surface.

        As a first step, we isotope the surface $\Sigma$ into the position described by Figure \ref{20250914-1}, which we denote by $\Sigma'$. In this position, its first-stage cobordism $C'_1$ is the obvious ribbon surface bounded by the Legendrian link described by the picture on the left of Figure \ref{20250914-2}. All later-stage cobordisms $C'_i$ are given by adding a trivial cobordism, followed by two 1-handle additions to the obvious ribbon surface bounded by the Legendrian link shown on the right of Figure \ref{20250914-2}. Also note that in this position, all of the ``boundary links" $L_i$ are already isotoped to Legendrian links, which we denote by $\mathcal{L}_i.$

        \begin{figure}
            \centering
            \begin{picture}(14.5cm,4cm)
                \put(0cm,0cm){\includegraphics[height=4cm]{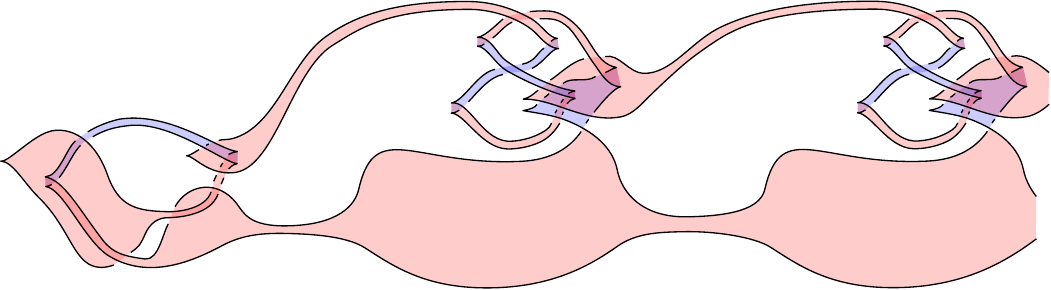}}
                
            \end{picture}        
            \caption{The isotoped exotic plane $\Sigma'$.}
            \label{20250914-1}
        \end{figure}

        \begin{figure}
                \centering
                \begin{picture}(10.5cm,3.5cm)      

                    \put(0cm,0cm){\includegraphics[height=2.5cm]{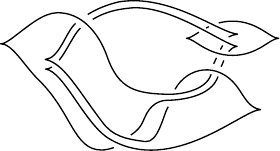}}
                    \put(6cm,0cm){\includegraphics[height=3.5cm]{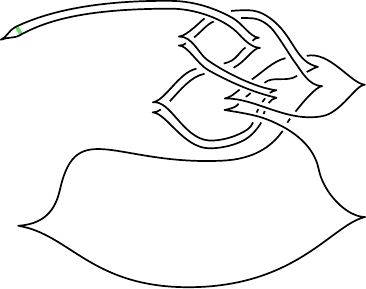}}

                \end{picture}        
                \caption{Left: the first-stage cobordism $C'_1$. Right: the ribbon surface that gives rise to later-stage cobordisms $C'_i$. }
                \label{20250914-2}
            \end{figure}

        Now in order to apply Corollary \ref{cor-Lag}, it suffices to check:
        \begin{enumerate}
            \item The surface $\Sigma'$ depicted in Figure \ref{20250914-1} is indeed smoothly isotopic to the original surface $\Sigma$.
            \item Each cobordism $C'_i$ can be realized as a Lagrangian cobordism from $\mathcal{L}_{i-1}$ to $\mathcal{L}_i$. That is, $\mathcal{L}_{i}$ can be obtained from $\mathcal{L}_{i-1}$ by a finite sequence of Legendrian isotopies and 0/1-handle additions as shown in Figure \ref{20250504-1}.
        \end{enumerate}

        \subsection*{\texorpdfstring{$\boldsymbol{\Sigma'}$}{Sigma'} is isotopic to \texorpdfstring{$\boldsymbol{\Sigma}$}{Sigma}}

            It suffices to check that all modified cobordisms $C'_i$ are smoothly isotopic to the original ones $C_i$. The key steps of the isotopy $C_1\sim C'_1$ are captured by Figure \ref{20250504-11}, and the key steps of the isotopy $C_i\sim C'_i$ for all $i>1$ are captured by Figure \ref{20250504-13}.

            Note that in the diagrams, we trace a set of bands, cutting along which results in an unlink in each diagram. Since the obvious disks bounded by these resulting unlinks are isotopic, it follows that the ribbon surfaces with the bands attached are also isotopic to each other. Hence, the surfaces $\Sigma$ and $\Sigma'$ are indeed isotopic.

            \begin{figure}[H]
                \centering
                \begin{picture}(15.2cm,7cm)      

                    \put(0cm,3.5cm){\includegraphics[height=3.5cm]{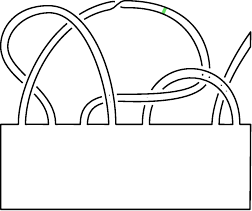}}
                    \put(5.5cm,3.5cm){\includegraphics[height=3.5cm]{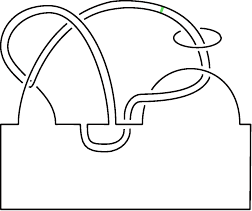}}
                    \put(11cm,3.5cm){\includegraphics[height=3.5cm]{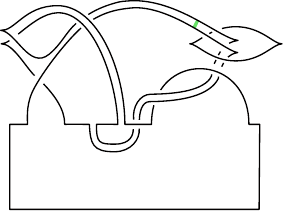}}

                    \put(0cm,0cm){\includegraphics[height=2.5cm]{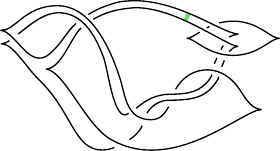}}
                    \put(5.5cm,0cm){\includegraphics[height=2.5cm]{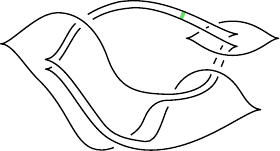}}

                \end{picture}
                \captionsetup{width = 15.2cm}
                \caption{A smooth isotopy between the link $L_1$ and its Legendrian representative $\mathcal{L}_1.$}
                \label{20250504-11}
            \end{figure}

            \begin{figure}
                \centering
                \begin{picture}(15.2cm,8cm)      

                    \put(0cm,4.5cm){\includegraphics[height=3.5cm]{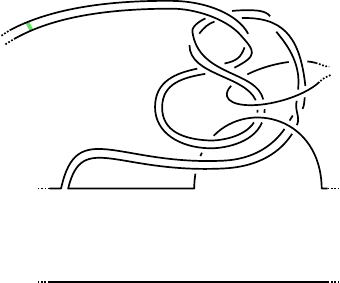}}
                    \put(5.5cm,4.5cm){\includegraphics[height=3.5cm]{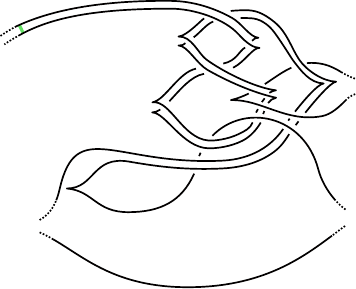}}
                    \put(11cm,4.5cm){\includegraphics[height=3.5cm]{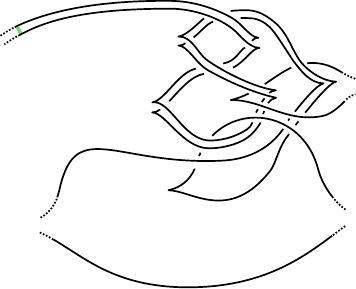}}

                    \put(0cm,0cm){\includegraphics[height=3.5cm]{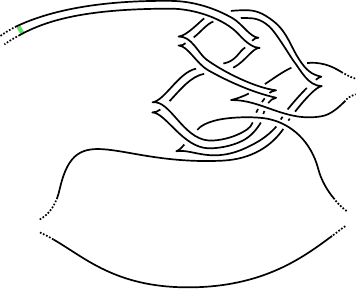}}
                    \put(5.5cm,0cm){\includegraphics[height=3.5cm]{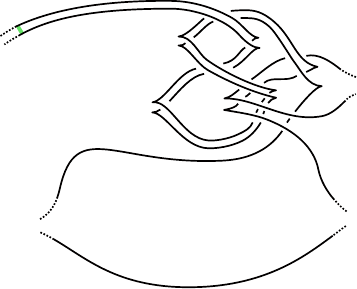}}

                \end{picture}
                \captionsetup{width = 15.2cm}
                \caption{A smooth isotopy between the link $L_i$ and its Legendrian representative $\mathcal{L}_i.$}
                \label{20250504-13}
            \end{figure}

        \subsection*{\texorpdfstring{$\boldsymbol{C'_i}$}{Ci} is a Lagrangian cobordism} Finally, we check that each cobordism $C'_i$ can be realized as a Lagrangian cobordism. In other words, we wish to obtain $\mathcal{L}_{i}$ from $\mathcal{L}_{i-1}$ via a finite sequence of Legendrian isotopies and 0/1-Lagrangian handle  additions.

        For the first-stage cobordism $C'_1$, the Legendrian link $\mathcal{L}_1$ is obtained from the empty link $\mathcal{L}_0$ by three 0-handle additions, one 1-handle addition, and Legendrian isotopies. The process is illustrated in Figure \ref{20250504-12}. 

            \begin{figure}
                \centering
                \begin{picture}(15.2cm,2.5cm)      

                    \put(0cm,0cm){\includegraphics[height=2.3cm]{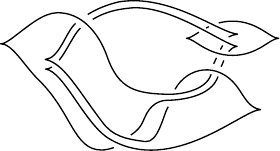}}
                    \put(5.5cm,0cm){\includegraphics[height=2.5cm]{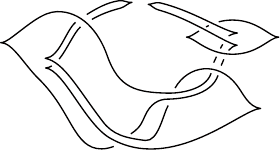}}
                    \put(11cm,0cm){\includegraphics[height=2.5cm]{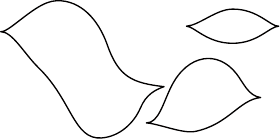}}

                \end{picture}        
                \caption{$\mathcal{L}_1$ is obtained from the empty link $\mathcal{L}_{0}$ by adding three Legendrian 0-handles and one Legendrian 1-handle.}
                \label{20250504-12}
            \end{figure}

            Next we check later-stage cobordisms $C'_i$ for $i>1$. By construction, each $\mathcal{L}_i$ is obtained from $\mathcal{L}_{i-1}$ by concatenating the pattern shown in the first picture of Figure \ref{20250504-14} via two Lagrangian 1-handle additions. On the other hand, the pattern is Legendrian isotopic to two Lagrangian 0-handles, as shown in Figure \ref{20250504-14}. It follows that $\mathcal{L}_i$ is obtained from $\mathcal{L}_{i-1}$ by two 1-handle additions, two 0-handle additions, and Legendrian isotopies.

            \begin{figure}
                \centering
                \begin{picture}(15.2cm,3.5cm)      

                    \put(0cm,0cm){\includegraphics[height=3.5cm]{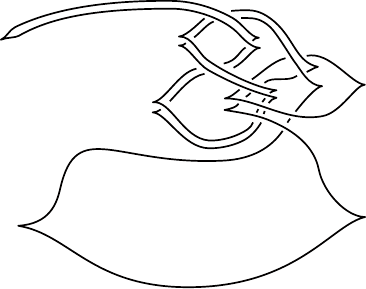}}
                    \put(5.5cm,0cm){\includegraphics[width=4.2cm]{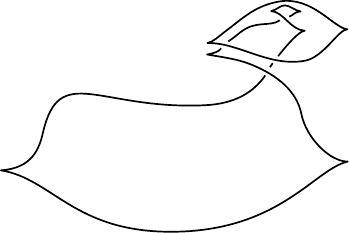}}
                    \put(11cm,0cm){\includegraphics[width=4.2cm]{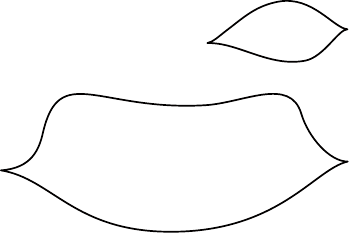}}

                \end{picture}        
                \caption{A Legendrian isotopy showing that $\mathcal{L}_i$ can be obtained from $\mathcal{L}_{i-1}$ by adding two Legendrian 0-handles and two Legendrian 1-handles.}
                \label{20250504-14}
            \end{figure}

        As each $\mathcal{L}_i$ is obtained from $\mathcal{L}_{i-1}$ via a finite sequence of Legendrian isotopies, 0-handle additions, and 1-handle additions, by Corollary \ref{cor-Lag}, we conclude that the surface $\Sigma'$ (and hence $\Sigma$) can be smoothly isotoped to a Lagrangian surface.

        This completes the proof of Theorem \ref{thrm-exist}.
        \hfill$\square$

        \subsection{The symplectic setting}

            Finally, we briefly discuss the symplectic setting. To isotope the surface into a symplectic submanifold, we need a symplectic/transverse analogue of Theorem \ref{thrm-Lag}, which we previously used to build Lagrangian cobordisms. The relevant theorem is stated below (cf. \cite{Hay21}, Lemma 3.2). For a proof of the theorem, we refer the readers to \cite{Hay21b} (Example 4.7 and Theorem 5.1) and \cite{EG22} (Section 2.2).
            
            \begin{thrm}{\label{thm-sym}}
                If two transverse links $L_-$ and $L_+$ in the standard contact $S^3$ are related by transverse isotopy and the moves depicted in Figure \ref{20250913-1}, then there exists an embedded, orientable, and collared symplectic cobordism from $L_-$ to $L_+$ in $S^3\times [a,b]$ for some choice of $a,b.$

                \begin{figure}[H]
                \centering
                \begin{picture}(7.5cm,3.75cm)  
                
                    \put(0.5cm,0cm){\includegraphics[width=7cm]{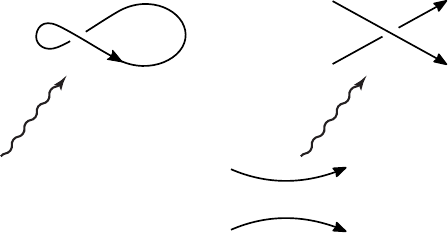} }
                    \put(0cm,0.5cm){$\varnothing$}
                
                \end{picture}        
                \caption{Transverse diagram moves corresponding to attaching a 0-handle and a 1-handle.}
                \label{20250913-1}
            \end{figure}
                
            \end{thrm}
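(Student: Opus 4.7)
The plan is to build the symplectic cobordism from $L_-$ to $L_+$ piecewise in the symplectization $(S^3 \times \mathbb{R}, d(e^t \alpha_{\operatorname{std}}))$, producing one explicit local model per elementary move and then stacking them using the collared structure, following the strategy of \cite{Hay21b} and \cite{EG22}.

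First I would dispose of transverse isotopy. Given a transverse isotopy $\{L_s\}_{s \in [0,1]}$, Gray stability lets us extend it to an ambient contact isotopy $\{\phi_s\}$ of $(S^3, \xi_{\operatorname{std}})$. For any transverse link $L \subset S^3$, the cylinder $L \times \mathbb{R}$ is symplectic in the symplectization: transversality forces $\alpha|_{TL}$ to be nonvanishing, and a direct computation shows that $d(e^t \alpha)$ restricts nondegenerately to $TL \oplus \partial_t$. Pushing $L_- \times [a,b]$ forward by $(p,t) \mapsto (\phi_{f(t)}(p), t)$ for a sufficiently slowly varying function $f$ with $f(a)=0$ and $f(b)=1$ then produces the desired trace; the symplectic-submanifold condition is open, so it persists provided $f'$ is small, and the ends are automatically cylindrical where $f$ is constant.

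Next I would handle the 0-handle and 1-handle moves via explicit Darboux models. For a 0-handle, a standard transverse unknot $U$ inside a Darboux chart bounds an explicit local symplectic disk (e.g.\ a small complex disk in $\mathbb{C}^2$ under the standard identification), which gives a collared symplectic disk in $S^3 \times [a,b]$ whose upper boundary is $U$ and whose lower boundary is empty; the disjoint union with the trivial cylinder on the remaining components yields the 0-handle cobordism. For the 1-handle, the attaching region admits a local Darboux model in which the two transverse arcs appear as pieces of complex lines and the saddle is realized as a slice of the family of complex curves $\{z_1 z_2 = \epsilon(t)\}$, interpolating between the two resolutions. This is precisely the model worked out in Example 4.7 and Theorem 5.1 of \cite{Hay21b} (and in Section 2.2 of \cite{EG22}); the key fact is that each level set is transverse and the total surface is symplectic by an explicit computation.

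Finally, I would concatenate the elementary cobordisms. Each piece is cylindrical near its boundary, and although vertical translation in the symplectization only scales the symplectic form by a positive constant rather than preserving it, scaling preserves nondegeneracy, so the symplectic-submanifold property is unaffected. After translating the pieces so that the upper end of one matches the lower end of the next, the stacked surface gives the required embedded, orientable, collared symplectic cobordism from $L_-$ to $L_+$ inside some $S^3 \times [a,b]$. The main obstacle is the explicit 1-handle model: one must show that a purely local complex-analytic saddle can be patched into an arbitrary transverse-link cylinder without destroying either transversality on the boundary or symplecticity in the interior, which is exactly the delicate interpolation carried out in \cite{Hay21b, EG22}.
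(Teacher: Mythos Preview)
The paper does not actually prove this theorem; it states the result and then writes ``For a proof of the theorem, we refer the readers to \cite{Hay21b} (Example 4.7 and Theorem 5.1) and \cite{EG22} (Section 2.2).'' Your proposal is a faithful outline of exactly those arguments --- Gray-stability traces for transverse isotopy, the local complex-curve model $\{z_1 z_2=\epsilon(t)\}$ for the 1-handle, an explicit symplectic disk for the 0-handle, and translation-in-symplectization stacking --- so you have supplied more than the paper itself does while citing the same sources it points to.
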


            Analogously to its Lagrangian/Legendrian counterpart, this result can be easily adapted to the noncompact setting. 
            
            \begin{cor}{\label{cor-sym}}
                Let $\Sigma$ be a properly embedded surface in $\mathbb{R}^4$ described by a movie of link diagrams $L_r:=\Sigma\cap S_r$ (with $L_0=\varnothing$). If, for each positive integer $i$, 
    
                \begin{enumerate}
                    \item the link $L_i$ is a transverse link in each $(S_i,\xi_i)$, and
                    \item the link $L_{i}$ is obtained from $L_{i-1}$ by a finite sequence of transverse isotopies, 0-handle additions, and 1-handle additions as shown in Figure \ref{20250913-1},
                \end{enumerate}
                then $\Sigma$ is smoothly isotopic to a symplectic surface in $(\mathbb{R}^4,\omega_{\operatorname{std}})$.
            \end{cor}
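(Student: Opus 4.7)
The proof runs almost word-for-word in parallel to that of Corollary \ref{cor-Lag}, with Theorem \ref{thm-sym} playing the role of Theorem \ref{thrm-Lag} and ``symplectic/transverse'' replacing ``Lagrangian/Legendrian'' throughout. The plan is as follows. First, for each integer $i \geq 1$, I would apply Theorem \ref{thm-sym} to the pair $(L_{i-1}, L_i)$ to obtain a compact, embedded, orientable, collared symplectic cobordism $\tilde{C}_i$ sitting inside a truncated symplectization $S^3 \times [a_i, b_i]$ for some reals $a_i < b_i$.

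Next I would stack these cobordisms. As in the Lagrangian proof, this requires translating each $\tilde{C}_i$ in the $\mathbb{R}$-direction so that $a_{i+1} = b_i$ for every $i \geq 1$. Translation $\phi_c \colon (x,t) \mapsto (x, t+c)$ pulls back the symplectic form $\omega = d(e^t\alpha)$ to $e^c \omega$; that is, $\phi_c$ is a \emph{conformal} symplectomorphism rather than a genuine one. The crucial observation is that conformal symplectomorphisms still preserve the class of symplectic submanifolds, since non-degeneracy of the restriction $\omega|_\Sigma$ is invariant under multiplication by a nonzero scalar. Moreover, $\phi_c$ carries each contact slice $S^3 \times \{t\}$ contactomorphically onto $S^3 \times \{t+c\}$, so the transverse links at the matched levels agree. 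After these translations, I would concatenate the $\tilde{C}_i$ along their collared ends to form a properly embedded symplectic surface $\tilde{\Sigma}$ inside $S^3 \times \mathbb{R}$. Identifying $S^3 \times \mathbb{R}$ symplectomorphically with $(\mathbb{R}^4 \setminus \{0\}, \omega_{\operatorname{std}})$ and using that $L_0 = \varnothing$ (so $\tilde{\Sigma}$ stays outside a neighborhood of the origin), I obtain a symplectic surface in $(\mathbb{R}^4, \omega_{\operatorname{std}})$.

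It then remains to see that $\tilde{\Sigma}$ is smoothly isotopic to $\Sigma$. By construction, both surfaces admit the same movie description: they intersect a sequence of concentric contact $3$-spheres in links smoothly isotopic to $L_i$, and the cobordism between consecutive spheres is in each case built by the same pattern of $0$- and $1$-handle attachments. Since transverse isotopy is in particular smooth isotopy of links, and the two handle-attachment recipes produce diffeomorphic cobordisms between diffeomorphic boundaries, the two surfaces differ only by a fiberwise smooth isotopy that can be assembled into an ambient smooth isotopy of $\mathbb{R}^4$.

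The main delicate point, and the only place where the argument departs materially from the Lagrangian case, is the observation that $\mathbb{R}$-translation in $(S^3 \times \mathbb{R}, d(e^t\alpha))$ preserves symplectic submanifolds despite failing to preserve the symplectic form on the nose. This is a short linear-algebra remark, but without it the stacking step collapses, so it should be stated explicitly in the write-up. Everything else in the proof is a routine adaptation of the existing argument for Corollary \ref{cor-Lag}.
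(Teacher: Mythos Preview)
Your proposal is correct and follows essentially the same approach as the paper, which simply states that the proof is identical in structure to that of Corollary~\ref{cor-Lag} and omits the details. Your explicit remark that $\mathbb{R}$-translation is a conformal symplectomorphism and therefore preserves symplectic submanifolds is exactly the symplectic analogue of the paper's observation that translation preserves Lagrangians, and your added paragraph verifying the smooth isotopy between $\tilde{\Sigma}$ and $\Sigma$ is a welcome clarification of a point the paper leaves implicit.
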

            
            The proof is identical in structure to that of Corollary \ref{cor-Lag}, so we omit the details.

            On the other hand, every Legendrian link has a transverse push-off (cf. Section 2.9 of \cite{Etn05}). The existence of symplectic exotic planes follows from applying Corollary \ref{cor-sym} to the transverse push-off of the Legendrian links $\mathcal{L}_i\subset S_i$. Here we note that the 1-handle additions now occur at crossings corresponding to twists in the bands, instead of away from crossings as in the Lagrangian/Legendrian construction.

\appendix
\section{Abstract Khovanov Homology}

    In this appendix, we provide more detail on abstract Khovanov homology, mostly following the exposition in Section 4 of \cite{MWW22}. Our goal is to justify Lemma \ref{iso-natural} (restated as Lemma \ref{iso-natural2} later in this section).
            
    We first extend the definitions for Khovanov homology to links in 3-manifolds abstractly diffeomorphic to $\mathbb{R}^3$. The key is to take all possible diffeomorphisms into consideration.

    \begin{defn}
        Let $L$ be a link in a manifold $R$ abstractly diffeomorphic to $\mathbb{R}^3$. Then we define:
        \begin{itemize}
            \item $M(R)$ to be the space of diffeomorphisms $R\xrightarrow{\cong} \mathbb{R}^3$,
            \item $M(R,L)$ to be the subspace of $M(R)$ containing diffeomorphisms $\phi$ such that $\phi(L)$ is blackboard framed and generic when projecting to the $xy$-plane,
            \item $\pi_L: T(R,L)\rightarrow M(R,L)$ to be the fiber bundle whose fiber at the point $\phi$ is the standard Khovanov homology $Kh(\phi(L))$,
            \item the abstract Khovanov homology $Kh(R,L)$ to be the bigraded abelian group of flat sections on the bundle $T(R,L)$ (defined using a natural notion of parallel transport on $T(R,L)$). 
        \end{itemize}
    \end{defn}

    \begin{defn}{\label{def-cob-R}}
        Let $(W,\Sigma):(R_0,L_0)\rightarrow(R_1,L_1)$ be a link cobordism where $W$ is abstractly diffeomorphic to $\mathbb{R}^3\times I$. Furthermore, we pick a diffeomorphism $\phi: W\rightarrow \mathbb{R}^3\times I$ whose restrictions to the boundary $\phi_0:R_0\rightarrow \mathbb{R}^3$ and $\phi_1:R_1\rightarrow \mathbb{R}^3$ are in $M(R_0,L_0)$ and $M(R_1,L_1)$, respectively. Then we can define the Khovanov homology cobordism map: for an abstract Khovanov homology class (i.e., flat section) $\eta\in Kh(R_0,L_0)$, we define $Kh(W,\Sigma)(\eta)$ to be the unique flat section of $Kh(R_1,L_1)$ such that $Kh(W,\Sigma)(\eta)(\phi_1) = Kh(\phi(\Sigma))(\eta(\phi_0)).$
    \end{defn}

    It is shown in \cite{MWW22} that the cobordism map in Definition \ref{def-cob-R} is independent of the choice of the diffeomorphism $\phi$.

    Now we can define Khovanov homology for links in manifolds abstractly diffeomorphic to $S^3$.  

            \begin{defn}
                Let $L$ be a link in a manifold $S$ abstractly diffeomorphic to $S^3$. We define:

                \begin{itemize}
                    \item $\pi: T(S,L)\rightarrow S\backslash L$ be the fiber bundle whose fiber at a point $p$ is the bigraded abelian group $Kh(S\backslash\{p\},L)$.
                    \item $Kh(S,L)$ to be the bigraded abelian group of flat sections on $T(S,L).$
                \end{itemize}
            \end{defn}

            \begin{defn}{\label{def-cob-S}}
                Let $(W,\Sigma):(S_0,L_0)\rightarrow (S_1,L_1)$ be a link cobordism where $W$ abstractly diffeomorphic to $S^3\times I$. Furthermore, we pick a path $p_t\subset W\backslash\Sigma$ connecting $p_0\in S_0\backslash L_0$ and $p_1\in S_1\backslash L_1$.  Then we can define the Khovanov homology cobordism map as follows: for an abstract Khovanov homology class (i.e., flat section) $\eta\in Kh(S_0,L_0)$, we define $Kh(W,\Sigma)(\eta)$ to be the unique flat section of $Kh(S_1,L_1)$ such that $Kh(W,\Sigma)(\eta)(p_1) = Kh(W\backslash p_t, \Sigma)(\eta(\phi_0)).$
            \end{defn}

            Again, it is shown in \cite{MWW22} that the cobordism map in Definition \ref{def-cob-S} is independent of the choice of the path $p_t$.

            The main result of \cite{MWW22} is the following theorem.

            \begin{thrm}[\cite{MWW22}]
                Let $\BiAb$ be the category of bigraded vector spaces over $\mathbb{Z}/2$. The Khovanov homology for links in $\mathbb{R}^3$ generalizes to a functor $Kh_{\mathbb{R}^3}: \Cob^{iso}_{\mathbb{R}^3}\rightarrow \BiAb$, where 
                $$\Cob^{iso}_{\mathbb{R}^3}:= \begin{Bmatrix}
                    \text{link embeddings in oriented $R\cong \mathbb{R}^3$}\\
                    \text{link cobordisms in oriented $W\cong \mathbb{R}^3\times I$ up to isotopy rel boundary}
                \end{Bmatrix}.$$

                The Khovanov homology for links in $S^3$ generalizes to a functor $Kh_{S^3}: \Cob^{iso}_{S^3}\rightarrow \BiAb$, where 
                $$\Cob^{iso}_{S^3}:= \begin{Bmatrix}
                    \text{link embeddings in oriented $S\cong S^3$}\\
                    \text{link cobordisms in oriented $W\cong S^3\times I$ up to isotopy rel boundary}
                \end{Bmatrix}.$$
            \end{thrm}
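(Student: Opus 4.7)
The plan is to verify that Definitions 6.1--6.4 yield well-defined assignments on objects and morphisms, are invariant under isotopy rel boundary of link cobordisms, and respect vertical composition. I would dispatch the $\mathbb{R}^3$-case first, and then bootstrap the $S^3$-case using the $\mathbb{R}^3$-functor applied to $S\backslash\{p\}$.

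For the $\mathbb{R}^3$-case, the first task is to equip $\pi_L:T(R,L)\to M(R,L)$ with a flat connection. Given a smooth path $\phi_t$ in $M(R,L)$, one obtains the ``trace cobordism'' $\bigcup_t \phi_t(L)\times\{t\}\subset \mathbb{R}^3\times I$, and I would take parallel transport to be the standard Khovanov cobordism map associated with this trace. Path-independence, and hence flatness, reduces to Jacobsson's theorem: any two paths in $M(R,L)$ with the same endpoints produce trace cobordisms that are smoothly isotopic rel boundary in $\mathbb{R}^3\times I$. Next, for Definition 6.2, I would verify that $Kh(W,\Sigma)$ is independent of the trivializing diffeomorphism $\phi:W\to\mathbb{R}^3\times I$. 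Given a second choice $\phi'$, the boundary restrictions of $\phi'\circ\phi^{-1}$ yield paths in $M(R_0,L_0)$ and $M(R_1,L_1)$, while $\phi'\circ\phi^{-1}$ itself is an ambient isotopy rel boundary from $\phi(\Sigma)$ to $\phi'(\Sigma)$. Combining flatness on the two boundaries with Jacobsson's theorem in the interior shows that the two candidate flat sections agree on the specified fibers, and flatness propagates the equality. Functoriality under stacking of cobordisms follows by choosing compatible $\phi$'s and invoking functoriality of the standard Khovanov TQFT, and isotopy invariance rel boundary of $Kh(W,\Sigma)$ again reduces to Jacobsson's theorem.

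For the $S^3$-case the new ingredient is the bundle $\pi:T(S,L)\to S\backslash L$ whose fiber at $p$ is $Kh(S\backslash\{p\},L)$, already constructed by the $\mathbb{R}^3$-functor. For a smooth path $p_t\subset S\backslash L$ from $p_0$ to $p_1$, lift $p_t$ to a properly embedded arc in $S\times I$ disjoint from $L\times I$; then $(S\times I)\backslash p_t$ is abstractly diffeomorphic to $\mathbb{R}^3\times I$, so the $\mathbb{R}^3$-functor applied to this cobordism supplies the candidate parallel transport $Kh(S\backslash\{p_0\},L)\to Kh(S\backslash\{p_1\},L)$. The main obstacle I anticipate is proving that this connection is flat. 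Since $\pi_1(S\backslash L)$ is generated by meridians of the components of $L$, it suffices to check that dragging $p$ once around a meridian acts as the identity on $Kh(S\backslash\{p\},L)$; the associated cobordism in $\mathbb{R}^3\times I$, after an isotopy, factors as a pair of canceling Reidemeister II/III moves introduced and undone as $p$ passes near a strand, so it acts by the identity in the standard Khovanov TQFT. With flatness in hand, independence of $Kh(W,\Sigma)$ from the chosen arc $p_t$ in Definition 6.4, as well as functoriality under vertical composition of $S^3$-cobordisms, follow from the already-established $\mathbb{R}^3$-statements applied to $W\backslash p_t\cong\mathbb{R}^3\times I$. Assembling these pieces yields the functors $Kh_{\mathbb{R}^3}$ and $Kh_{S^3}$.
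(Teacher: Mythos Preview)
The paper does not prove this statement; it is quoted as a result of \cite{MWW22} and serves only as background for the paper's own contribution in the appendix (the subsequent theorem on factoring through $\Cob^{\text{diff}}$, which is what Lemmas~\ref{auxilliary-R3} and~\ref{auxilliary-S3} establish). There is therefore no proof in the paper to compare your proposal against.

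That said, your outline tracks the architecture of the argument in \cite{MWW22}, and the $\mathbb{R}^3$ portion is essentially correct: contractibility of $\operatorname{Diff}(\mathbb{R}^3)$ makes the trace cobordism well-defined up to isotopy rel boundary, and Jacobsson's theorem does the rest. The genuine gap is your treatment of flatness for the bundle $T(S,L)\to S\setminus L$. The monodromy around a meridian is precisely the \emph{sweep-around move}: in $S\setminus\{p\}\cong\mathbb{R}^3$, dragging $p$ once around a strand of $L$ corresponds to pushing that strand through the point at infinity and back, which at the level of planar diagrams means sweeping that strand entirely around the rest of the link. The resulting trace cobordism is a long global sequence of Reidemeister~II and~III moves passing the strand past every arc and crossing of the diagram; it does not reduce to a local pair of canceling Reidemeister moves ``as $p$ passes near a strand.'' Proving that this composite acts as the identity on Khovanov homology is the main technical theorem of \cite{MWW22} and is exactly what was missing from earlier functoriality proofs, so your one-line dismissal is where the argument would fail if carried out as written.
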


            However, for our purpose to define the end Khovanov homology, the previous theorem is not quite enough. In particular, we need to modify the sources $\Cob^{iso}_{\mathbb{R}^3}$ and $\Cob^{iso}_{S^3}$ of the two functors, as shown in the theorem below. 

            \begin{thrm}{\label{quotient-category}}
                The Khovanov homology functor $Kh_{\mathbb{R}^3}:\Cob^{\text{iso}}_{\mathbb{R}^3}\rightarrow \BiAb$ factors through the quotient category $$\Cob^{\text{diff}}_{\mathbb{R}^3}:= \begin{Bmatrix}
                    \text{link embeddings in oriented $R\cong \mathbb{R}^3$}\\
                    \text{link cobordisms in oriented $W\cong \mathbb{R}^3\times I$ up to ambient diffeomorphisms rel boundary}
                \end{Bmatrix}.$$
            
                The Khovanov homology functor $Kh_{S^3}:\Cob^{\text{iso}}_{S^3}\rightarrow \BiAb$ factors through the quotient category $$\Cob^{\text{diff}}_{S^3}:= \begin{Bmatrix}
                    \text{link embeddings in oriented $S\cong S^3$}\\
                    \text{link cobordisms in oriented $W\cong S^3\times I$ up to ambient diffeomorphisms rel boundary}
                \end{Bmatrix}.$$
            \end{thrm}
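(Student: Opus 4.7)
The plan is to deduce both factorization statements directly from Definitions \ref{def-cob-R} and \ref{def-cob-S}, exploiting the independence of the auxiliary data (the diffeomorphism $\phi$ or the path $p_t$) established in \cite{MWW22}. The key observation is that an ambient diffeomorphism rel boundary lets us transport a choice of auxiliary data across the diffeomorphism without altering its boundary restriction, so that two ostensibly different cobordisms land on the \emph{same} normalized model and hence induce the same standard Khovanov map.

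I would first treat the $\mathbb{R}^3$ case. Suppose $(W,\Sigma)$ and $(W',\Sigma')$ are two cobordisms from $(R_0,L_0)$ to $(R_1,L_1)$ related by an ambient diffeomorphism $\Phi:(W,\Sigma)\to(W',\Sigma')$ whose restriction to $R_0\sqcup R_1$ is the identity. Pick any $\phi:W\to \mathbb{R}^3\times I$ whose boundary restrictions lie in $M(R_0,L_0)$ and $M(R_1,L_1)$, and set $\phi':=\phi\circ\Phi^{-1}:W'\to \mathbb{R}^3\times I$. Because $\Phi$ is the identity on the boundary, $\phi'$ and $\phi$ have identical boundary restrictions, so $\phi'$ is an admissible trivialization for $(W',\Sigma')$ in the sense of Definition \ref{def-cob-R}. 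Moreover, $\phi(\Sigma)=\phi'(\Sigma')$ as embedded cobordisms in $\mathbb{R}^3\times I$, so the standard Khovanov maps $Kh(\phi(\Sigma))$ and $Kh(\phi'(\Sigma'))$ are literally equal. Unwinding Definition \ref{def-cob-R} then yields $Kh(W,\Sigma)=Kh(W',\Sigma')$ as maps $Kh(R_0,L_0)\to Kh(R_1,L_1)$.

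Next I would reduce the $S^3$ case to the $\mathbb{R}^3$ case. Given an ambient diffeomorphism rel boundary $\Phi:(W,\Sigma)\to(W',\Sigma')$, fix a path $p_t\subset W\setminus\Sigma$ connecting basepoints $p_0\in S_0\setminus L_0$ and $p_1\in S_1\setminus L_1$, and set $p'_t:=\Phi(p_t)\subset W'\setminus\Sigma'$. Since $\Phi$ is the identity on the boundary, $p'_t$ has the same endpoints $p_0,p_1$, and $\Phi$ restricts to an ambient diffeomorphism $(W\setminus p_t,\Sigma)\to(W'\setminus p'_t,\Sigma')$ that is still the identity on the boundary. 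Both complements are abstractly diffeomorphic to $\mathbb{R}^3\times I$, so the $\mathbb{R}^3$ case just proved gives equality of the path-deleted cobordism maps, and Definition \ref{def-cob-S} then yields $Kh(W,\Sigma)=Kh(W',\Sigma')$. Functoriality of the resulting assignment on $\Cob^{\text{diff}}$ is inherited from the functoriality already known on $\Cob^{\text{iso}}$.

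No substantial obstacle is anticipated; the theorem is essentially a formal consequence of the coordinate-free character of abstract Khovanov homology, so the argument is mostly bookkeeping. The one item requiring a moment of care is confirming that $\phi'=\phi\circ\Phi^{-1}$ really lies in the admissible space of trivializations for $W'$, i.e.\ that its boundary restrictions still lie in $M(R_i,L_i)$; this is automatic because $\Phi$ fixes the boundary pointwise and admissibility is a condition on the boundary image. Everything else reduces to chasing the definitions and invoking the independence-of-auxiliary-data results from \cite{MWW22}.
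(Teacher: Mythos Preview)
Your proposal is correct and follows essentially the same approach as the paper: the paper likewise reduces the theorem to two lemmas (one for $\mathbb{R}^3$, one for $S^3$), proving the first by transporting the trivialization via $\phi':=\phi\circ\Phi^{-1}$ and noting that boundary restrictions and the image surface are unchanged, and proving the second by transporting the path $p_t$ via $\Phi$ and invoking the $\mathbb{R}^3$ case on the path-complement. The only cosmetic difference is notation ($F$ versus $\Phi$).
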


            \begin{remark}
                This theorem is a direct consequence of the definitions of abstract Khovanov homology, and is implicitly mentioned in Section 5 of \cite{MWW22}. However, to the author's knowledge, it has not been spelled out explicitly in the literature.
            \end{remark}

            Spelling out the definitions of a quotient functor, Theorem \ref{quotient-category} becomes Lemmas \ref{auxilliary-R3} and \ref{auxilliary-S3}.

            \begin{lemma}{\label{auxilliary-R3}}
                Let $(W,\Sigma):(R_0,L_0)\rightarrow (R_1,L_1)$ be a cobordism of pairs where $W$ is abstractly diffeomorphic to $\mathbb{R}^3\times I$. Further assume that there is another 4-manifold $W'$ cobounded by $R_0$ and $R_1$, together with a diffeomorphism $F:W\rightarrow W'$ such that $F|_\partial =\id_{R_0\sqcup R_1}$. Then the two cobordisms $(W,\Sigma)$ and $(W', F(\Sigma))$ induce the same map $Kh(W,\Sigma)=Kh(W', F(\Sigma))$ between $Kh(R_0, L_0)$ and $Kh(R_1, L_1)$.
            \end{lemma}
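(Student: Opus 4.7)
The plan is to unwind Definition \ref{def-cob-R} and exploit the freedom in choosing the trivializing diffeomorphism. I would fix any diffeomorphism $\phi: W \to \mathbb{R}^3 \times I$ whose boundary restrictions $\phi_0 := \phi|_{R_0}$ and $\phi_1 := \phi|_{R_1}$ lie in $M(R_0, L_0)$ and $M(R_1, L_1)$, respectively (such $\phi$ exists because the generic-position condition defining $M(R_i, L_i)$ is open and dense, so any diffeomorphism $W \to \mathbb{R}^3 \times I$ may be adjusted near the boundary to satisfy it). Then I would set $\phi' := \phi \circ F^{-1} : W' \to \mathbb{R}^3 \times I$ as the trivializing diffeomorphism for $W'$.

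The central observation is that since $F|_\partial = \id_{R_0 \sqcup R_1}$, we have $\phi'|_{R_0} = \phi_0$ and $\phi'|_{R_1} = \phi_1$, so $\phi'$ is an admissible choice for computing $Kh(W', F(\Sigma))$ according to Definition \ref{def-cob-R}. Moreover, as subsets of $\mathbb{R}^3 \times I$, we have the tautology
$$\phi'(F(\Sigma)) \;=\; \phi \circ F^{-1}(F(\Sigma)) \;=\; \phi(\Sigma),$$
so the two link cobordisms in $\mathbb{R}^3 \times I$ produced via $\phi$ and $\phi'$ are literally identical, and therefore induce the same map on standard Khovanov homology.

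Given any flat section $\eta \in Kh(R_0, L_0)$, Definition \ref{def-cob-R} characterizes $Kh(W, \Sigma)(\eta)$ as the unique flat section of $T(R_1, L_1)$ whose value at $\phi_1$ equals $Kh(\phi(\Sigma))(\eta(\phi_0))$, while $Kh(W', F(\Sigma))(\eta)$ is the unique flat section whose value at $\phi'_1 = \phi_1$ equals $Kh(\phi'(F(\Sigma)))(\eta(\phi'_0)) = Kh(\phi(\Sigma))(\eta(\phi_0))$. Since flat sections of $T(R_1, L_1)$ are determined by their value at a single point, these two sections coincide on all of $M(R_1, L_1)$, yielding $Kh(W, \Sigma)(\eta) = Kh(W', F(\Sigma))(\eta)$ for every $\eta$.

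The argument is essentially bookkeeping once one realizes that the abstract Khovanov homology machinery was built precisely to absorb diffeomorphism choices. The only technical points to address are (i) the existence of a trivializing $\phi$ with generic boundary restrictions, and (ii) the well-definedness of the cobordism map independent of the choice of $\phi'$, which is already established in \cite{MWW22}. Beyond these, the proof reduces to the identity $\phi \circ F^{-1} \circ F = \phi$, so I do not anticipate any substantive obstacle.
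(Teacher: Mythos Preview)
Your proposal is correct and follows essentially the same approach as the paper: choose a trivialization $\phi:W\to\mathbb{R}^3\times I$, set $\phi'=\phi\circ F^{-1}$ for $W'$, observe that $F|_\partial=\id$ forces $\phi'|_{R_i}=\phi|_{R_i}$ and $\phi'(F(\Sigma))=\phi(\Sigma)$, and conclude by uniqueness of flat sections. The paper's proof is slightly terser but structurally identical.
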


            \begin{proof}
                We first pick a diffeomorphism $\phi:W\rightarrow\mathbb{R}^3\times I$. Then $\phi':=\phi\circ F^{-1}$ is a diffeomorphism $W'\rightarrow \mathbb{R}^3\times I$. Now consider any element $\eta\in Kh(R_0, L_0)$. By definition,
                
                \begin{itemize}
                    \item $Kh(W,\Sigma)(\eta)\in Kh(R_1, L_1)$ is the unique flat section that satisfies  $$Kh(W,\Sigma)(\eta)(\phi|_{R_1}) = Kh(\mathbb{R}^3\times I, \phi(\Sigma))(\eta(\phi|_{R_0})).$$
                    \item $Kh(W',F(\Sigma))(\eta)\in Kh(R_1, L_1)$ is the unique flat section that satisfies  $$Kh(W',F(\Sigma))(\eta)(\phi'|_{R_1}) = Kh(\mathbb{R}^3\times I, \phi'(F(\Sigma)))(\eta(\phi'|_{R_0})).$$
                \end{itemize}

                However, by construction we know that $\phi_{R_0} = \phi'_{R_0}$, $\phi_{R_1} = \phi'_{R_1}$, and $\phi(\Sigma) = \phi'(F(\Sigma))$. Thus it follows from uniqueness that the two induced maps are equivalent.
            \end{proof}

            \begin{lemma}{\label{auxilliary-S3}}
                Let $(W,\Sigma): (S_0,L_0)\rightarrow (S_1,L_1)$ be a link cobordism where $W$ is abstractly diffeomorphic to $S^3\times I$. Further assume that there is another 4-manifold $W'$ cobounded by $S_0$ and $S_1$, together with a diffeomorphism $F:W\rightarrow W'$ such that $F|_\partial =\id_{S_0\sqcup S_1}$, then the two cobordisms $(W,\Sigma)$ and $(W', F(\Sigma))$ induce the same map $Kh(W,\Sigma)=Kh(W', F(\Sigma))$ between $Kh(S_0, L_0)$ and $Kh(S_1, L_1)$.
            \end{lemma}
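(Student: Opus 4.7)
The plan is to reduce Lemma \ref{auxilliary-S3} to the already-established $\mathbb{R}^3$-version (Lemma \ref{auxilliary-R3}) by puncturing both ambient cobordisms along a compatible pair of paths. This directly mirrors the way Definition \ref{def-cob-S} is built on top of Definition \ref{def-cob-R}: the image $Kh(W,\Sigma)(\eta)$ is characterised as the unique flat section of $T(S_1,L_1)$ whose value at a chosen basepoint $p_1$ is computed from an $\mathbb{R}^3$-cobordism map after removing a path to $p_0$.

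First I would pick a path $p_t\subset W\backslash \Sigma$ connecting a point $p_0\in S_0\backslash L_0$ to a point $p_1\in S_1\backslash L_1$. Because $F|_\partial = \id_{S_0\sqcup S_1}$, the image $p'_t := F\circ p_t$ is a path in $W'\backslash F(\Sigma)$ with the same endpoints $p_0$ and $p_1$. Removing these paths produces two link cobordisms $(W\backslash p_t,\Sigma)$ and $(W'\backslash p'_t, F(\Sigma))$ between the pairs $(S_0\backslash\{p_0\}, L_0)$ and $(S_1\backslash\{p_1\}, L_1)$, each ambient 4-manifold now abstractly diffeomorphic to $\mathbb{R}^3\times I$. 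By construction $F$ restricts to a diffeomorphism $W\backslash p_t \to W'\backslash p'_t$ that carries $\Sigma$ to $F(\Sigma)$ and is the identity on the boundary $(S_0\backslash\{p_0\}) \sqcup (S_1\backslash\{p_1\})$. Lemma \ref{auxilliary-R3} then applies verbatim, yielding the equality
\[
Kh(W\backslash p_t, \Sigma) = Kh(W'\backslash p'_t, F(\Sigma))
\]
as maps $Kh(S_0\backslash\{p_0\}, L_0) \to Kh(S_1\backslash\{p_1\}, L_1)$.

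For any $\eta\in Kh(S_0,L_0)$, Definition \ref{def-cob-S} characterises both $Kh(W,\Sigma)(\eta)$ and $Kh(W',F(\Sigma))(\eta)$ as flat sections of $T(S_1,L_1)$ whose respective values at $p_1$ are $Kh(W\backslash p_t, \Sigma)(\eta(p_0))$ and $Kh(W'\backslash p'_t, F(\Sigma))(\eta(p_0))$. By the previous step these values coincide, so the two flat sections agree at $p_1$, and hence everywhere by uniqueness. The main (and really only) subtlety is confirming that $F$ genuinely restricts to a boundary-preserving diffeomorphism of the punctured cobordisms, which is immediate once one notes that the chosen endpoints $p_0,p_1$ both lie on the boundary and are therefore fixed by $F$; beyond this, the argument is formal and all the substantive work has already been absorbed into Lemma \ref{auxilliary-R3}.
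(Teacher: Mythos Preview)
Your proof is correct and follows essentially the same approach as the paper: pick a path $p_t$ in $W\setminus\Sigma$, push it forward via $F$ to obtain $p'_t$ in $W'\setminus F(\Sigma)$ with the same endpoints, apply Lemma~\ref{auxilliary-R3} to the punctured cobordisms, and conclude by the uniqueness clause in Definition~\ref{def-cob-S}. The paper's proof is the same argument with nearly identical wording.
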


            \begin{proof}
                We first pick a path $p_t\subset W\backslash \Sigma$, whose endpoints are $p_0\in S_0\backslash L_0$ and $p_1\in S_1\backslash L_1$. Denote its image under $F$ by $p'_t$. Note that as $F$ restricts to the identity map on the boundary, we know that the endpoints of $p'_t$ are also $p_0$ and $p_1$.

                Let $\eta$ be an element in the $Kh(S_0, L_0)$. By definition, 
                
                \begin{itemize}
                    \item $Kh(W,\Sigma)(\eta)$ is the unique element in $Kh(S_1, L_1)$ such that $$Kh(W,\Sigma)(\eta)(p_1) = Kh(W\backslash p_t, \Sigma)(\eta(p_0)).$$
                    \item $Kh(W',F(\Sigma))(\eta)$ is the unique element in $Kh(S_1, L_1)$ such that $$Kh(W',F(\Sigma))(\eta)(p_1) = Kh(W'\backslash p'_t, F(\Sigma))(\eta(p_0)).$$
                \end{itemize} 
                
                On the other hand, according to the previous lemma, we know that the two right-hand sides are equal to each other (as $F$ restricts to a diffeomorphism $(W\backslash p_t, \Sigma)\rightarrow (W'\backslash p'_t, F(\Sigma))$ by construction). Thus, by uniqueness, we conclude that the two induced maps are equivalent.
            \end{proof}
            
            This completes the proof of Theorem \ref{quotient-category}. \hfill$\square$

            \medskip

            With Theorem \ref{quotient-category} in hand, it now makes sense to talk about isomorphisms induced by diffeomorphisms. Given a diffeomorphism $f:(S_0,L_0)\rightarrow(S_1,L_1)$, we can construct the 4-manifold $W:=S_0\times I\cup_f S_1\times I$ and the embedded surface $\Sigma:= L_0\times I\cup_f L_1\times I$. The cobordism $(W,\Sigma)$ induces a homomorphism $Kh(W,\Sigma): Kh(S_0,L_0)\rightarrow Kh(S_1,L_1).$

            \begin{lemma}
                The induced map $Kh(W,\Sigma): Kh(S_0,L_0)\rightarrow Kh(S_1,L_1)$ is an isomorphism.
            \end{lemma}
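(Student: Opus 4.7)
The plan is to exhibit a two-sided inverse for $Kh(W,\Sigma)$ by running the mapping cylinder construction on $f^{-1}$. Set $W' := S_1 \times I \cup_{f^{-1}} S_0 \times I$ and $\Sigma' := L_1 \times I \cup_{f^{-1}} L_0 \times I$, giving a link cobordism $(W',\Sigma') \colon (S_1,L_1) \to (S_0,L_0)$ with $W'$ again abstractly diffeomorphic to $S^3 \times I$. By the functoriality of abstract Khovanov homology under cobordism composition, it then suffices to prove that the two composite cobordisms $(W,\Sigma) \cup (W',\Sigma')$ and $(W',\Sigma') \cup (W,\Sigma)$ each induce the identity on their respective Khovanov homologies.

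I would handle the first composition in detail; the other is symmetric. View $(W,\Sigma) \cup (W',\Sigma')$ as four stacked layers $S_0 \times I$, $S_1 \times I$, $S_1 \times I$, $S_0 \times I$, glued successively by $f$, $\id$, and $f^{-1}$. I would construct an explicit diffeomorphism of pairs from this to $(S_0 \times [0,4], L_0 \times [0,4])$ by taking the identity on the bottom and top $S_0$-layers and $(y,t) \mapsto (f^{-1}(y), t+1)$ on the combined middle block $S_1 \times [0,2]$. A direct check along each seam (using $f^{-1} \circ f = \id$ at $t=1$ and the identity of $f^{-1}$ with itself at $t=2$) shows these formulas patch compatibly; the surface $\Sigma \cup \Sigma'$ is carried to $L_0 \times [0,4]$; and the map restricts to $\id$ on both boundary 3-spheres. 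Rescaling $[0,4]$ to $[0,1]$ then gives a boundary-fixing diffeomorphism of pairs to the trivial cobordism $(S_0 \times I, L_0 \times I)$.

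At this point the proof is finished by invoking Theorem \ref{quotient-category}: the induced map $Kh((W,\Sigma) \cup (W',\Sigma'))$ depends only on the boundary-fixing diffeomorphism class of the cobordism, and hence equals the map induced by $(S_0 \times I, L_0 \times I)$, which is $\id_{Kh(S_0,L_0)}$ (a standard property of the abstract theory, since the identity cylinder pulls back to the identity parallel transport on flat sections). Combined with functoriality this yields $Kh(W',\Sigma') \circ Kh(W,\Sigma) = \id$, and swapping $f$ and $f^{-1}$ in the argument above gives $Kh(W,\Sigma) \circ Kh(W',\Sigma') = \id$. Thus $Kh(W,\Sigma)$ is an isomorphism with inverse $Kh(W',\Sigma')$.

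The only mildly delicate point I foresee is verifying that the piecewise-defined map above is genuinely a smooth diffeomorphism across the two gluing seams rather than merely a homeomorphism; this is handled by the standard smoothing theory for boundary gluings along diffeomorphisms, and in fact our explicit formulas are already designed to agree on a neighborhood of each seam after the canonical collar identifications, so no additional choices are required.
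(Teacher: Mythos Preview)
Your proposal is correct and follows essentially the same approach as the paper: both construct the reverse mapping cylinder via $f^{-1}$, exhibit an explicit boundary-fixing diffeomorphism from each composite to the trivial cylinder (the paper writes this as $\id \cup f^{-1} \cup \id$, matching your layer-by-layer description), and invoke Theorem~\ref{quotient-category} to conclude that the induced maps are mutual inverses. Your version is slightly more explicit about the seam checks and the smoothness issue, but there is no substantive difference.
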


            \begin{proof}
                Define $\overline{W}$ as the 4-manifold $S_1\times I\cup_{f^{-1}} S_0\times I$, and $\overline{\Sigma}$ as the embedded surface $L_1\times I \cup_{f^{-1}} L_0\times I$. 

                We then observe that the pair $(W\cup_{\id} \overline{W}, \Sigma\cup_{\id}\overline{\Sigma})$ is diffeomorphic to the pair $(S_0\times I, L_0\times I)$ (via the diffeomorphism $\id\cup f^{-1}\cup \id$), and the pair $(\overline{W}\cup_{\id} {W}, \overline{\Sigma}\cup_{\id}\Sigma)$ is diffeomorphic to the pair $(S_1\times I, L_1\times I)$ (via the diffeomorphism $\id\cup f\cup \id$). This implies that the pairs $(W,\Sigma)$ and $(\overline{W}, \overline{\Sigma})$ are inverses of each other in the category $\Cob^{\text{diff}}_{S^3}$. Note that they are not inverses of each other in the category $\Cob^{\text{iso}}_{S^3}$.
                
                By Theorem \ref{quotient-category}, we conclude that the induced maps $Kh(W,\Sigma)$ and $Kh(\overline{W}, \overline{\Sigma})$ are inverses of each other as homogeneous morphisms between bigraded abelian groups. In particular, the induced maps are isomorphisms.
            \end{proof}

            Moreover, these induced isomorphisms are natural. Namely, we have the following lemma (cf. Lemma \ref{iso-natural}).

            \begin{lemma}{\label{iso-natural2}}
                Let $(W,\Sigma): (S_0,L_0)\rightarrow (S_1,L_1)$ and $(W',\Sigma'): (S'_0,L'_0)\rightarrow (S'_1,L'_1)$ be link cobordisms where $W,W'$ are abstractly diffeomorphic to $S^3\times I$. Then for any diffeomorphism of pairs $f:(W,\Sigma)\rightarrow(W',\Sigma')$, the following diagram commutes:
                \begin{center}
                    \begin{tikzcd}
                        {Kh(S_0,L_0)} \arrow[d, "f_\ast"] \arrow[rr, "{Kh(W,\Sigma)}"] &  & {Kh(S_1,L_1)} \arrow[d, "f_\ast"] \\
                        {Kh(S'_0,L'_0)} \arrow[rr, "{Kh(W',\Sigma')}"]                 &  & {Kh(S'_1,L'_1)}                   
                    \end{tikzcd}
                \end{center}
            \end{lemma}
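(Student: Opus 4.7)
My plan is to reduce the commutativity of the square to an equality of two composite cobordisms in $\Cob^{\text{diff}}_{S^3}$, and then produce the required diffeomorphism of composite cobordisms directly from $f$. This lets the functoriality of Theorem \ref{quotient-category} (equivalently Lemma \ref{auxilliary-S3}) finish the job.

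First, I would represent the vertical arrows of the square as Khovanov maps induced by mapping cylinder cobordisms. For $i=0,1$, let $M_{f_i} := (S_i \times I) \cup_{f_i} (S'_i \times I)$ be the cobordism from $(S_i, L_i)$ to $(S'_i, L'_i)$ carrying the surface $(L_i \times I) \cup_{f_i} (L'_i \times I)$; by the construction in the lemma preceding Lemma \ref{iso-natural2} we have $f_{i\ast} = Kh(M_{f_i})$. The two compositions around the square become the Khovanov maps induced by the composite cobordisms
\[
X_{\text{top}} := M_{f_1} \circ (W, \Sigma), \qquad X_{\text{bot}} := (W', \Sigma') \circ M_{f_0},
\]
both viewed as cobordisms of pairs from $(S_0, L_0)$ to $(S'_1, L'_1)$. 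By Theorem \ref{quotient-category} it then suffices to exhibit a diffeomorphism of pairs $X_{\text{top}} \to X_{\text{bot}}$ that restricts to the identity on the common boundary.

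The diffeomorphism will be built from $f$ itself. The key observation is that, up to a collar retraction rel outer boundary, $X_{\text{top}}$ is naturally identified with $(W, \Sigma)$ equipped with a modified top-boundary parametrization in which the original $S_1 \subset \partial W$ is identified with the outer $S'_1$ via $f_1$; the extra cylinder $M_{f_1}$ glued on top simply gets absorbed into a collar of the top boundary of $W$. An analogous collar retraction rewrites $X_{\text{bot}}$ as $(W', \Sigma')$ with its bottom-boundary parametrization modified via $f_0^{-1}$. Under these presentations, $f:(W,\Sigma) \to (W',\Sigma')$ directly gives the desired diffeomorphism of pairs, and one only needs to check that the composite boundary identifications simplify to the identity: on the bottom, $x \in S_0$ maps via $f$ to $f_0(x) \in S'_0$ and is then pulled back via $f_0^{-1}$ to $x$; symmetrically on the top, the $f_1$ and $f_1^{-1}$ factors cancel, yielding $\id_{S'_1}$.

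I expect the main obstacle to be the ``collar absorption'' step, i.e.\ showing rigorously that $M_{f_1} \circ (W, \Sigma)$ is diffeomorphic rel outer boundary to $(W,\Sigma)$ with a re-identified top, and similarly on the other side. This should reduce to a standard collar-neighborhood extension argument, but some care is needed to ensure that the embedded surface is preserved throughout. Once this is established, the remainder is formal: Lemma \ref{auxilliary-S3} promotes the constructed diffeomorphism rel boundary to the equality $Kh(X_{\text{top}}) = Kh(X_{\text{bot}})$, which unpacks precisely to $f_{1\ast} \circ Kh(W,\Sigma) = Kh(W',\Sigma') \circ f_{0\ast}$, the commutativity of the diagram.
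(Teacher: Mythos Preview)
Your proposal is correct and follows essentially the same strategy as the paper: represent the two paths around the square as composite cobordisms, then exhibit a diffeomorphism of pairs rel boundary and invoke Theorem~\ref{quotient-category}. The only difference is one of bookkeeping. You compare the two composites $M_{f_1}\circ(W,\Sigma)$ and $(W',\Sigma')\circ M_{f_0}$ from $(S_0,L_0)$ to $(S'_1,L'_1)$, which have their extra cylinders on opposite ends; this is what forces you into the ``collar absorption'' step you flagged. The paper instead precomposes with $f_\ast^{-1}$ and compares $Kh(W',\Sigma')$ with $f_\ast\circ Kh(W,\Sigma)\circ f_\ast^{-1}$ as maps $Kh(S'_0,L'_0)\to Kh(S'_1,L'_1)$: both sides are then represented by three-piece cobordisms $S'_0\times I\cup W''\cup S'_1\times I$ with cylinders on \emph{both} ends, and the required diffeomorphism rel boundary is simply $\id\cup f\cup\id$. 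This sidesteps your anticipated obstacle entirely, but your version goes through as well once the routine collar argument is supplied.
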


            \begin{proof}
                Consider the two maps $Kh(W', \Sigma')$ and $f_\ast\circ Kh(W,\Sigma)\circ f^{-1}_\ast$ between $Kh(S'_0,L'_0)$ and $Kh(S'_1,L'_1)$:

                \begin{itemize}
                    \item The map $Kh(W', \Sigma')$ is induced by the cobordism $(W', \Sigma')$, which we rewrite as $$(S'_0\times I\cup_{\id} W'\cup_{\id} S'_1\times I, \,L'_0\times I\cup_{\id} \Sigma'\cup_{\id} L'_1\times I).$$
                    
                    \item The map $f_\ast\circ Kh(W,\Sigma)\circ f^{-1}_\ast$, according to the definition of induced isomorphisms, is induced by the cobordism $(S'_0\times I\cup_{f^{-1}} W \cup_{f} S'_1\times I,\, L'_0\times I\cup_{f^{-1}} \Sigma \cup_{f} L'_1\times I).$
                \end{itemize}

                However, the two cobordisms are diffeomorphic rel boundary. By Theorem \ref{quotient-category}, they induce the same map on Khovanov homology.
            \end{proof}

\medskip

\bibliographystyle{alpha} 
\bibliography{bib}

\end{spacing}
\end{document}